\definecolor{labelkey}{rgb}{0,0.08,0.45}
\definecolor{rekey}{rgb}{0,0.6,0.0}
\definecolor{Brown}{rgb}{0.45,0.0,0.05}
\DeclareMathOperator{\weakstarly}{\rightharpoondown_{\mathrm{w*}}}
\newcommand{\scal}[2]{\langle{{#1},{#2}}\rangle}
\newcommand{\RR}{\ensuremath{\mathbb R}}
\newcommand{\RX}{\ensuremath{\,\left]-\infty,+\infty\right]}}
\newcommand{\RXX}{\ensuremath{\,\left[-\infty,+\infty\right]}}
\newcommand{\menge}[2]{\big\{{#1} \mid {#2}\big\}}
\newcommand{\To}{\ensuremath{\rightrightarrows}}
\newcommand{\dom}{\ensuremath{\operatorname{dom}}}
\newcommand{\gra}{\ensuremath{\operatorname{gra}}}
\newcommand{\inte}{\ensuremath{\operatorname{int}}}
\newcommand{\ran}{\ensuremath{\operatorname{ran}}}
\newcommand{\conv}{\ensuremath{\operatorname{conv}}}
\renewcommand{\phi}{\ensuremath{\varphi}}
\newtheorem{theorem}{Theorem}[section]
\newtheorem{lemma}[theorem]{Lemma}
\newtheorem{fact}[theorem]{Fact}
\newtheorem{corollary}[theorem]{Corollary}
\newtheorem{proposition}[theorem]{Proposition}
\newtheorem{definition}[theorem]{Definition}
\theoremstyle{plain}{\theorembodyfont{\rmfamily}
}
\theoremstyle{plain}{\theorembodyfont{\rmfamily}
}
\theoremstyle{plain}{\theorembodyfont{\rmfamily}
}
\theoremstyle{plain}{\theorembodyfont{\rmfamily}
}
\theoremstyle{plain}{\theorembodyfont{\rmfamily}
\newtheorem{remark}[theorem]{Remark}}
\newtheorem{problem}[theorem]{Problem}
\theoremstyle{plain}{\theorembodyfont{\rmfamily}
}
\begin{document}


\title{\sffamily{Characterizations of ultramaximally monotone operators}}

\author{
 Liangjin\ Yao\thanks{CARMA,
University of Newcastle,
 Newcastle, New South Wales 2308, Australia.
E-mail:  \texttt{liangjinyao@gmail.com}.}}

\date{January 28,  2014}
\maketitle

\begin{abstract} \noindent

In this paper, we study properties of
 ultramaximally monotone operators.
 We characterize the interior and
 the closure of the range of an ultramaximally monotone
operator. We establish
the Brezis--Haraux condition in
the setting of a general Banach space. Moreover, we show that every  ultramaximally monotone operator is
of type (NA).
We also  provide  some sufficient conditions for
 a Banach space to be reflexive
 by a linear continuous and ultramaximally monotone operator.

\end{abstract}

\noindent {\bfseries 2010 Mathematics Subject Classification:}\\
{Primary   47H05;
Secondary
 47N10, 47B65,
 90C25}

\noindent {\bfseries Keywords:}
Adjoint, Brezis-Haruax condition,
Fenchel conjugate,
Fitzpatrick function,
linear operator,
linear relation,
maximally monotone operator,
monotone operator,
 operator of type (D),
 operator of type (NI),
 operator of  type (NA),
 rectangular,
 reflexivity,
set-valued operator,
subdifferential operator,
ultramaximally monotone operator.

\section{Introduction}

Throughout this paper, we assume that
$X$ is a real Banach space with norm $\|\cdot\|$,
that $X^*$ is the continuous dual of $X$, and
that $X$ and $X^*$ are paired by $\scal{\cdot}{\cdot}$.
Let $A\colon X\To X^*$
be a \emph{set-valued operator}  (also known as a relation,
 point-to-set mapping or multifunction)
from $X$ to $X^*$, i.e., for every $x\in X$, $Ax\subseteq X^*$,
and let
$\gra A := \menge{(x,x^*)\in X\times X^*}{x^*\in Ax}$ be
the \emph{graph} of $A$.
Recall that $A$ is  \emph{monotone} if
\begin{equation}
\scal{x-y}{x^*-y^*}\geq 0,\quad \forall (x,x^*)\in \gra A\;\forall (y,y^*)\in\gra A,
\end{equation}
and \emph{maximally monotone} if $A$ is monotone
 and $A$ has no proper monotone extension
(in the sense of graph inclusion).
Let $A:X\rightrightarrows X^*$ be monotone and $(x,x^*)\in X\times X^*$.
 We say $(x,x^*)$ is \emph{monotonically related to}
$\gra A$ if
\begin{align*}
\langle x-y,x^*-y^*\rangle\geq0,\quad \forall (y,y^*)\in\gra A.\end{align*}
Let $A:X\rightrightarrows X^*$ be monotone.
 We say $A$ is \emph{ultramaximally monotone}
 if $A$ is maximally monotone with respect
  to $X^{**}\times X^*$ \cite{BS1}.

 Our paper is motivated by Bauschke
 and Simons' paper \cite{BS1},
 where the authors study the
  properties of an unbounded
   linear ultramaximally monotone operator.
   In this paper,
   we continue to study
   the properties of a general ultramaximally monotone operator.

Monotone operators have proven to be important
objects in modern Optimization and Analysis; see, e.g.,
\cite{Bor1,Bor3, BY3, ABVY, Reval, YaoPhD} and the books
\cite{BC2011,BorVan,BurIus,ButIus,ph,Si,Si2,RockWets,Zalinescu,Zeidler2A,Zeidler2B}
and the references therein. We adopt standard notation used in these
books: $\dom A:= \menge{x\in X}{Ax\neq\varnothing}$ is the
\emph{domain} of $A$. Given a subset $C$ of $X$, $\inte C$ is the
\emph{interior} of $C$ and $\overline{C}$ is the
norm \emph{closure} of $C$. We set $C^{\bot}:= \{x^*\in X^*
\mid(\forall c\in C)\, \langle x^*, c\rangle=0\}$ and $S^{\bot}:=
\{x^{**}\in X^{**} \mid(\forall s\in S)\, \langle x^{**},
s\rangle=0\}$ for a set  $S\subseteq X^*$.
Let $A:X\rightrightarrows X^*$.
The \emph{adjoint} of an operator  $A$, written $A^*$, is defined by
\begin{equation*}
\gra A^* :=
\big\{(x^{**},x^*)\in X^{**}\times X^*\mid(x^*,-x^{**})\in(\gra A)^{\bot}\big\}.
\end{equation*} We say $A$ is a \emph{linear relation} if $\gra A$ is a
linear subspace.
Let  $A$ be a  linear relation.
  We say that $A$ is
\emph{skew} if $\gra A \subseteq \gra (-A^*)$;
equivalently, if $\langle x,x^*\rangle=0,\; \forall (x,x^*)\in\gra A$.
Furthermore,
$A$ is \emph{symmetric} if $\gra A
\subseteq\gra A^*$; equivalently, if $\scal{x}{y^*}=\scal{y}{x^*}$,
$\forall (x,x^*),(y,y^*)\in\gra A$.

The \emph{indicator function} of $C$, written as $\iota_C$, is defined
at $x\in X$ by
\begin{align}
\iota_C (x):=\begin{cases}0,\,&\text{if $x\in C$;}\\
\infty,\,&\text{otherwise}.\end{cases}\end{align}
If $D\subseteq X$, we set $C-D=\{x-y\mid x\in C, y\in D\}$.
  For every $x\in X$, the normal cone operator of $C$ at $x$
is defined by $N_C(x):= \menge{x^*\in
X^*}{\sup_{c\in C}\scal{c-x}{x^*}\leq 0}$, if $x\in C$; and $N_C(x)=\varnothing$,
if $x\notin C$.
For $x,y\in X$, we set $\left[x,y\right]:=\{tx+(1-t)y\mid 0\leq t\leq 1\}$.
 Given $f\colon X\to \RX$, we set
$\dom f:= f^{-1}(\RR)$ and
$f^*\colon X^*\to\RXX\colon x^*\mapsto
\sup_{x\in X}\big(\scal{x}{x^*}-f(x)\big)$ is
the \emph{Fenchel conjugate} of $f$.
We say $f$ is \emph{proper} if $\dom f\neq\varnothing$.
 We say $f$ is \emph{supercoercive} if $\lim_{\|x\|\rightarrow
\infty} \frac{f(x)}{\|x\|}=+\infty$.
Let $f$ be proper.
Then
   $\partial f\colon X\To X^*\colon
   x\mapsto \menge{x^*\in X^*}{(\forall y\in
X)\; \scal{y-x}{x^*} + f(x)\leq f(y)}$ is the \emph{subdifferential
operator} of $f$.
 We  denote  by $J_X$ the duality map from $X$ to $X^*$, i.e.,
the subdifferential of the function $\tfrac{1}{2}\|\cdot\|^2$.
 For convenience, we denote by $J:=J_X$.

 Let $A:X\rightrightarrows X^*$ and   $F\colon X\times X^*\rightarrow\RX$. We say
$F$ is a \emph{representer} for $\gra A$ if \begin{equation}
\gra A=\big\{(x,x^*)\in X\times X^*\mid F(x,x ^*)=\langle x, x^*\rangle\big\}.
\end{equation}
Given two real Banach spaces $X,Y$ and $F_1,
F_2\colon X\times Y\rightarrow\RX$, the \emph{partial
inf-convolution} $F_1\Box_2 F_2$ is the function defined on $X\times
Y$ by
\begin{equation*}F_1\Box_2 F_2\colon
(x,y)\mapsto \inf_{v\in Y} \big\{F_1(x,y-v)+F_2(x,v)\big\}.
\end{equation*} We set  $P_X: X\times Y\rightarrow
X\colon (x,y)\mapsto x$,
 and
 $P_Y: X\times Y\rightarrow Y\colon (x,y)\mapsto y$.
We denote by $\longrightarrow$ and
$\weakstarly$ the norm convergence and weak$^*$ convergence of
nets,  respectively.

We now recall two fundamental properties of maximally
monotone operators.

 \begin{definition}\label{trio}
 Let $A:X\To X^*$ be maximally monotone.
 \begin{enumerate}
 \item We say $A$ is
\emph{of dense type or type (D)} (1976, \cite{Gossez76};
 see also \cite{ph2} and \cite[Theorem~9.5]{Si5}) if for every
$(x^{**},x^*)\in X^{**}\times X^*$ with
\begin{align*}
\inf_{(a,a^*)\in\gra A}\langle a-x^{**}, a^*-x^*\rangle\geq 0,
\end{align*}
there exist a  bounded net
$(a_{\alpha}, a^*_{\alpha})_{\alpha\in\Gamma}$ in $\gra A$
such that
$(a_{\alpha}, a^*_{\alpha})_{\alpha\in\Gamma}$
weak*$\times$strong converges to
$(x^{**},x^*)$.
\item We say $A$ is
\emph{of type negative infimum (NI)} (1996, \cite{SiNI}) if
\begin{align*}
\inf_{(a,a^*)\in\gra A}\langle x^{**}-a,x^*-a^*\rangle\leq0,
\quad \forall(x^{**},x^*)\in X^{**}\times X^*.
\end{align*}

\end{enumerate}
\end{definition}
These two properties coincide  by Simons, Marques Alves and Svaite (see
\cite[Lemma~15]{SiNI} or \cite[Theorem~36.3(a)]{Si2}, and
\cite[Theorem~4.4]{MarSva}).
By the definition of ultrmaximally monotone operators,
 every ultramaximally monotone  operator is of type (D) (i.e., type (NI)).
   For convenience, we write type (NI) to represent type (D) as well.
Note that not every operator of type (NI) is ultramaximally monotone.
 For instance, suppose that  $X$ is a nonreflexive space. Let $A:X\rightrightarrows
X^*$ be defined by $\gra A:=X\times \{0\}$.
  Then $\gra A=\partial\iota_X$ and $A$ is of type (NI) by
Fact~\ref{GozztyD:1}. But $\gra A\varsubsetneq X^{**}\times \{0\}$.
 Hence $A$ is not ultramaximally monotone.

However,  there always exists an ultramximally monotone operator in $X$,
 for instance, the operator with the graph:
$\{0\}\times X^*$.

The remainder of this paper is organized as follows. In
Section~\ref{s:aux}, we collect auxiliary results for future
reference and for the reader's convenience. Our main
results (Theorem~\ref{PropLeP:1},  Theorem~\ref{domFdm:1},
Theorem~\ref{RectaE:1} and Theorem~\ref{TyNA:2}) are presented in  Section~\ref{s:main}.
We also pose various interesting open problems at the end of this paper.

\section{Auxiliary Results}
\label{s:aux}

In this section, we introduce some basic facts.
We start with the Attouch-Brezis' Fenchel duality theorem.
\begin{fact}[Attouch-Brezis]{\rm (See \cite[Theorem~1.1]{AtBrezis}
 or \cite[Remark~15.2]{Si2}.)}\label{AttBre:1}
Let $f,g: X\rightarrow\RX$ be proper lower semicontinuous  and convex.
Assume that $
\bigcup_{\lambda>0} \lambda\left[\dom f-\dom g\right]$
is a closed subspace of $X$.
Then $\partial (f+g)=\partial f+\partial g$ and
\begin{equation*}
(f+g)^*(z^*) =\min_{y^*\in X^*} \left[f^*(y^*)+g^*(z^*-y^*)\right],\quad \forall z^*\in X^*.
\end{equation*}
\end{fact}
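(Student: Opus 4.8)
The plan is to prove the two conclusions in order: first the exact infimal‑convolution formula for $(f+g)^*$, and then to read off the subdifferential sum rule from it. One direction of the conjugate identity is free of any qualification: for any proper convex $f,g$ and any $y^*\in X^*$, splitting the supremum that defines $(f+g)^*(z^*)$ gives $(f+g)^*(z^*)\le f^*(y^*)+g^*(z^*-y^*)$, so that $(f+g)^*\le f^*\,\square\,g^*$ pointwise. The entire content of the theorem is therefore the reverse inequality together with \emph{attainment} of the infimum over $y^*$; once $(f+g)^*(z^*)=f^*(y^*)+g^*(z^*-y^*)$ is known with an exact minimizer, the sum rule will follow by a Fenchel--Young argument.

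To get the reverse inequality I would fix $z^*$ and, after subtracting the affine term $\scal{\cdot}{z^*}$ from $f$, reduce to the case $z^*=0$; I may also assume $(f+g)^*(0)=-\mu$ is finite, where $\mu:=\inf_{x}\big(f(x)+g(x)\big)$, the case $(f+g)^*(0)=+\infty$ being vacuous. Producing a minimizing $y^*$ then amounts to finding $y^*\in X^*$ with $\scal{x-x'}{y^*}\le f(x)+g(x')-\mu$ for all $x\in\dom f$, $x'\in\dom g$. Equivalently, introduce the marginal (infimal‑projection) function $p\colon X\to\RXX$, $p(u):=\inf_{x}\big(f(x)+g(x-u)\big)-\mu$, which is convex, satisfies $p(0)=0$, and has $\dom p=\dom f-\dom g$; the sought $y^*$ is then precisely an element of $\partial p(0)$. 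Thus everything collapses to the subdifferentiability of $p$ at the origin.

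This is where the hypothesis enters and where the main difficulty lies. The qualification says exactly that $Y:=\bigcup_{\lambda>0}\lambda\,[\dom f-\dom g]=\bigcup_{\lambda>0}\lambda\,\dom p$ is a closed subspace, hence a Banach space in its own right, and that $0$ lies in the core of $\dom p$ relative to $Y$. The obstacle is that $p$ need not be lower semicontinuous, so one cannot simply invoke the classical fact that a convex function is subdifferentiable at a core point of its domain. The Attouch--Brezis device is a completeness/Baire‑category argument carried out \emph{inside} $Y$: setting $C:=\{u\in Y \mid p(u)\le 1\}$, the absorbing (core) property gives $Y=\bigcup_{n\in\NN} nC$, and since $Y$ is complete, Baire's theorem furnishes a point of $C$ of relative interior near which $p$ is bounded above; local upper boundedness of a convex function forces finiteness, continuity, and a nonempty subdifferential there, and convexity together with $0\in\dom p$ then propagates subdifferentiability back to the origin. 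Establishing this lower‑semicontinuity‑free subdifferentiability is the technical heart, and the step I expect to be hardest.

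Finally, the subdifferential formula follows from the exact conjugate formula. The inclusion $\partial f+\partial g\subseteq\partial(f+g)$ is immediate from the definition. For the reverse, take $x^*\in\partial(f+g)(x)$, so that Fenchel--Young holds with equality, $f(x)+g(x)+(f+g)^*(x^*)=\scal{x}{x^*}$. Using the exact decomposition $(f+g)^*(x^*)=f^*(y^*)+g^*(x^*-y^*)$ for some $y^*$, rewrite this as $\big[f(x)+f^*(y^*)-\scal{x}{y^*}\big]+\big[g(x)+g^*(x^*-y^*)-\scal{x}{x^*-y^*}\big]=0$. Both bracketed quantities are nonnegative by Fenchel--Young, so both vanish, giving $y^*\in\partial f(x)$ and $x^*-y^*\in\partial g(x)$, whence $x^*\in\partial f(x)+\partial g(x)$. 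This completes the proof.
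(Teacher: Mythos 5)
The paper does not prove this statement; it is quoted as a Fact with citations to Attouch--Brezis and to Simons' book, so your attempt can only be measured against the known proofs. Your architecture is in fact the classical Attouch--Brezis one, and most of its components are correct and complete: the one-line inequality $(f+g)^*\le f^*\,\square\,g^*$; the reduction to $z^*=0$ by absorbing the linear term into $f$ (which leaves $\dom f$, hence the constraint qualification, unchanged); the observation that exactness at $z^*=0$ is precisely the statement $\partial p(0)\neq\varnothing$ for the marginal function $p(u)=\inf_x\big(f(x)+g(x-u)\big)-\mu$; and the derivation of $\partial(f+g)=\partial f+\partial g$ from the exact conjugate formula via two Fenchel--Young inequalities that must both vanish. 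That last paragraph is airtight.

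The genuine gap sits exactly at the step you flag as hardest, and as you wrote it the step fails. Baire's theorem applied to $Y=\bigcup_n n\overline{C}$ (closures in $Y$) yields a point interior to $\overline{C}$, \emph{not} to $C=\{u\in Y\mid p(u)\le 1\}$: since $p$ is not lower semicontinuous, $C$ need not be closed, and for a general convex set the interior of the closure can be strictly larger than the interior (a dense proper subspace has empty interior while its closure is everything), so nothing about upper boundedness of $p$ on an open set follows. The tool that bridges this is cs-closedness (ideal convexity): in a Banach space a convex set that contains the sum of every convergent convex series of its elements has the same interior as its closure. But your $C$ is not obviously cs-closed, precisely because the infimum defining $p$ ranges over unbounded witnesses: for a convex series $u=\sum_k\lambda_k u_k$ with $p(u_k)\le 1$, approximate minimizers $x_k$ may escape to infinity, so $\sum_k\lambda_k x_k$ need not converge. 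The Attouch--Brezis remedy is to build boundedness into the sets: with
\begin{equation*}
C_n:=\big\{u\in X \mid \exists x\in X,\ \|x\|\le n,\ f(x)\le n,\ g(x-u)\le n\big\},
\end{equation*}
one has $\dom p=\bigcup_n C_n$ and hence $Y=\bigcup_{n,m}\,mC_n$, a countable union; each $C_n$ \emph{is} cs-closed (bounded witnesses, completeness of $X$, and lower semicontinuity plus convexity of $f$ and $g$, which give the convex-series inequality), and $p\le 2n-\mu$ on $C_n$. Baire then produces a nonempty relative interior of some $C_n$ itself, and from there your chain of deductions (local upper boundedness $\Rightarrow$ continuity of $p$ on $\inte_Y\dom p\ni 0$ $\Rightarrow$ $\partial p(0)\neq\varnothing$ relative to $Y$, followed by a Hahn--Banach extension of the subgradient from $Y^*$ to $X^*$, which you should state explicitly) does go through. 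So the missing idea is the bounded-witness/cs-closedness device; without it, the Baire step does not deliver the point of $C$ that your argument needs.
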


\begin{fact}[Simons and Z\u{a}linescu]
\emph{(See \cite[Theorem~4.2]{SiZ} or
\cite[Theorem~16.4(a)]{Si2}.)}\label{F4} Let $X, Y$ be real Banach
spaces and $F_1, F_2\colon X\times Y \to \RX$ be proper lower
semicontinuous and convex. Assume that for every
$(x,y)\in X\times Y$,
\begin{equation*}(F_1\Box_2 F_2)(x,y)>-\infty
\end{equation*}
and that  $\bigcup_{\lambda>0} \lambda\left[P_X\dom F_1-P_X\dom F_2\right]$
is a closed subspace of $X$. Then for every $(x^*,y^*)\in X^*\times Y^*$,
\begin{equation*}
(F_1\Box_2 F_2)^*(x^*,y^*)=\min_{u^*\in X^*}
\left\{F_1^*(x^*-u^*,y^*)+F_2^*(u^*,y^*)\right\}.
\end{equation*}\end{fact}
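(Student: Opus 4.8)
The plan is to reduce the partial inf-convolution formula to the ordinary Fenchel duality formula of Attouch--Brezis (Fact~\ref{AttBre:1}) by lifting $F_1$ and $F_2$ to the enlarged space $X\times Y\times Y$, in which two copies of $Y$ decouple the convolution variable. Concretely, I would define $\mathcal F_1,\mathcal F_2\colon X\times Y\times Y\to\RX$ by
\[
\mathcal F_1(x,v,w):=F_1(x,w),\qquad \mathcal F_2(x,v,w):=F_2(x,v).
\]
Each is the composition of a proper lower semicontinuous convex function with a continuous linear projection, hence is itself proper, lower semicontinuous, and convex. Properness of the sum (it never equals $-\infty$, since $F_1,F_2>-\infty$, and it is not identically $+\infty$) will follow from the qualification condition, which in particular forces $P_X\dom F_1\cap P_X\dom F_2\neq\emp$.

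Next I would verify that the hypothesis transfers to the lifted data. Since $\dom\mathcal F_1=\{(x,v,w):(x,w)\in\dom F_1\}$ and $\dom\mathcal F_2=\{(x,v,w):(x,v)\in\dom F_2\}$, and since the free coordinates let the second and third components of $\dom\mathcal F_1-\dom\mathcal F_2$ range over all of $Y$, one obtains
\[
\dom\mathcal F_1-\dom\mathcal F_2=\big(P_X\dom F_1-P_X\dom F_2\big)\times Y\times Y.
\]
Hence $\bigcup_{\lambda>0}\lambda[\dom\mathcal F_1-\dom\mathcal F_2]=\big(\bigcup_{\lambda>0}\lambda[P_X\dom F_1-P_X\dom F_2]\big)\times Y\times Y$ is a closed subspace of $X\times Y\times Y$ precisely because the corresponding set in $X$ is a closed subspace by assumption. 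Applying Fact~\ref{AttBre:1} to $\mathcal F_1,\mathcal F_2$ then yields, for every $(x^*,v^*,w^*)\in X^*\times Y^*\times Y^*$,
\[
(\mathcal F_1+\mathcal F_2)^*(x^*,v^*,w^*)=\min_{(a^*,b^*,c^*)}\big\{\mathcal F_1^*(a^*,b^*,c^*)+\mathcal F_2^*(x^*-a^*,v^*-b^*,w^*-c^*)\big\},
\]
with the minimum attained.

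The final step is the bookkeeping that turns this into the stated identity. A direct computation gives $\mathcal F_1^*(x^*,v^*,w^*)=F_1^*(x^*,w^*)$ if $v^*=0$ and $+\infty$ otherwise, and $\mathcal F_2^*(x^*,v^*,w^*)=F_2^*(x^*,v^*)$ if $w^*=0$ and $+\infty$ otherwise. Feeding these into the inf-convolution forces $b^*=0$ and $c^*=w^*$, collapsing the triple minimisation to a single parameter $u^*:=x^*-a^*$; after evaluating at the diagonal point $(x^*,y^*,y^*)$ the right-hand side becomes $\min_{u^*}\{F_1^*(x^*-u^*,y^*)+F_2^*(u^*,y^*)\}$. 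For the left-hand side, the substitution $w=y-v$ in
\[
(\mathcal F_1+\mathcal F_2)^*(x^*,y^*,y^*)=\sup_{x,v,w}\big\{\langle x,x^*\rangle+\langle v,y^*\rangle+\langle w,y^*\rangle-F_1(x,w)-F_2(x,v)\big\}
\]
reproduces exactly $\sup_{x,y}\{\langle x,x^*\rangle+\langle y,y^*\rangle-(F_1\Box_2F_2)(x,y)\}=(F_1\Box_2F_2)^*(x^*,y^*)$, completing the proof.

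I expect the main obstacle to be the second step---confirming that the qualification condition lifts correctly---together with the care needed in the conjugate computation: one must track that the indicator-type constraints produced by $\mathcal F_1^*$ and $\mathcal F_2^*$ (namely $v^*=0$ and $w^*=0$) interact with the inf-convolution so that exactly one free dual variable $u^*$ survives, and that the diagonal specialisation $v^*=w^*=y^*$ matches the change of variables $w=y-v$ on the primal side. The role of the hypothesis $(F_1\Box_2F_2)(x,y)>-\infty$ is to keep all the functions in play proper, so that Fact~\ref{AttBre:1} applies and no $\infty-\infty$ ambiguity arises.
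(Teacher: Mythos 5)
The paper does not prove this statement: it is imported as a Fact, with the proof delegated to Simons--Z\u{a}linescu \cite[Theorem~4.2]{SiZ} and \cite[Theorem~16.4(a)]{Si2}, so there is no internal proof to compare against. Judged on its own, your argument is correct, and it is in the same spirit as the cited sources: decouple the convolution variable by passing to the enlarged space $X\times Y\times Y$ and invoke the Attouch--Brezis theorem (Fact~\ref{AttBre:1}) there. The three steps that carry the proof all check out. First, $\mathcal{F}_1,\mathcal{F}_2$ are proper, lower semicontinuous and convex, being compositions of $F_1,F_2$ with continuous linear surjections. Second, because the coordinate $v$ is unconstrained in $\dom\mathcal{F}_1$ and $w$ is unconstrained in $\dom\mathcal{F}_2$, one indeed gets $\dom\mathcal{F}_1-\dom\mathcal{F}_2=\left[P_X\dom F_1-P_X\dom F_2\right]\times Y\times Y$, and since $\lambda Y=Y$ for $\lambda>0$, the union $\bigcup_{\lambda>0}\lambda\left[\dom\mathcal{F}_1-\dom\mathcal{F}_2\right]$ is the product of $\bigcup_{\lambda>0}\lambda\left[P_X\dom F_1-P_X\dom F_2\right]$ with $Y\times Y$, hence a closed subspace exactly when the hypothesis holds. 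Third, the conjugates are $\mathcal{F}_1^*(a^*,b^*,c^*)=F_1^*(a^*,c^*)+\iota_{\{0\}}(b^*)$ and $\mathcal{F}_2^*(a^*,b^*,c^*)=F_2^*(a^*,b^*)+\iota_{\{0\}}(c^*)$ (no $\infty-\infty$ ambiguity since $F_1^*,F_2^*>-\infty$ by properness of $F_1,F_2$), so in the exact infimal convolution supplied by Fact~\ref{AttBre:1} at the point $(x^*,y^*,y^*)$ any finite-valued choice must have $b^*=0$ and $c^*=y^*$, leaving the single parameter $u^*$; and your substitution $w=y-v$ correctly identifies $(\mathcal{F}_1+\mathcal{F}_2)^*(x^*,y^*,y^*)$ with $(F_1\Box_2 F_2)^*(x^*,y^*)$, an identity valid in extended arithmetic because the pairing terms are finite. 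Attainment of the minimum also transfers, since a minimizer with finite value is forced into the slice $b^*=0$, $c^*=y^*$.

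One inaccuracy in your closing commentary, though it is not a gap in the proof: the hypothesis $(F_1\Box_2 F_2)>-\infty$ is not what keeps your functions proper. $\mathcal{F}_1$ and $\mathcal{F}_2$ are proper because $F_1$ and $F_2$ are, and properness of the sum $\mathcal{F}_1+\mathcal{F}_2$ already follows from the constraint qualification (a closed subspace contains $0$, whence $P_X\dom F_1\cap P_X\dom F_2\neq\emp$), as you yourself note earlier. In fact your argument never uses the hypothesis $(F_1\Box_2 F_2)>-\infty$ at all: if $F_1\Box_2 F_2$ took the value $-\infty$ somewhere, your change-of-variables computation shows both sides of the asserted identity would equal $+\infty$. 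That hypothesis is needed for other assertions of the cited theorem --- in particular properness and lower semicontinuity of $F_1\Box_2 F_2$ itself, which is what the paper actually extracts in Proposition~\ref{F12} --- not for the conjugation formula you derived.
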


The Fitzpatrick function below is a key tool in Monotone
Operator  Theory, which  has been applied comprehensively.
\begin{fact}[Fitzpatrick]
\emph{(See {\cite[Corollary~3.9]{Fitz88}}.)}
\label{f:Fitz}
Let $A\colon X\To X^*$ be maximally monotone,  and set
\begin{equation}
F_A\colon X\times X^*\to\RX\colon
(x,x^*)\mapsto \sup_{(a,a^*)\in\gra A}
\big(\scal{x}{a^*}+\scal{a}{x^*}-\scal{a}{a^*}\big),
\end{equation}
 the \emph{Fitzpatrick function} associated with $A$.
Then for every $(x,x^*)\in X\times X^*$, the inequality
$\scal{x}{x^*}\leq F_A(x,x^*)$ is true, and  equality holds if and
only if $(x,x^*)\in\gra A$.
\end{fact}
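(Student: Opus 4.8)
The plan is to collapse the whole statement onto a single algebraic identity and then invoke maximality of $A$ twice. First I would record that, for any $(x,x^*)\in X\times X^*$ and any $(a,a^*)\in\gra A$,
\[
\scal{x}{a^*}+\scal{a}{x^*}-\scal{a}{a^*}=\scal{x}{x^*}-\scal{x-a}{x^*-a^*},
\]
which is checked by expanding $\scal{x-a}{x^*-a^*}$. Taking the supremum over $(a,a^*)\in\gra A$ then rewrites the Fitzpatrick function as
\[
F_A(x,x^*)=\scal{x}{x^*}-\inf_{(a,a^*)\in\gra A}\scal{x-a}{x^*-a^*}.
\]
This reformulation turns both assertions into statements about the single quantity $\delta(x,x^*):=\inf_{(a,a^*)\in\gra A}\scal{x-a}{x^*-a^*}$, namely that $\delta(x,x^*)\leq 0$ always, with $\delta(x,x^*)=0$ exactly on $\gra A$.

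For the inequality $\scal{x}{x^*}\leq F_A(x,x^*)$ it suffices to prove $\delta(x,x^*)\leq 0$, and here I would split into two cases. If $(x,x^*)\in\gra A$, then choosing $(a,a^*)=(x,x^*)$ makes the term vanish, so $\delta(x,x^*)\leq 0$. If $(x,x^*)\notin\gra A$, then maximality of $A$ prevents $(x,x^*)$ from being monotonically related to $\gra A$ (otherwise $\gra A\cup\{(x,x^*)\}$ would be a strictly larger monotone graph), so there exists $(a,a^*)\in\gra A$ with $\scal{x-a}{x^*-a^*}<0$, whence $\delta(x,x^*)<0$. Either way $\delta(x,x^*)\leq 0$, giving the claimed inequality.

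For the equality characterization, note that equality amounts to $\delta(x,x^*)=0$. If $(x,x^*)\in\gra A$, monotonicity gives $\scal{x-a}{x^*-a^*}\geq 0$ for every $(a,a^*)\in\gra A$, while $(a,a^*)=(x,x^*)$ attains $0$, so $\delta(x,x^*)=0$. Conversely, if $\delta(x,x^*)=0$ then $0$ is a lower bound for all the products, i.e.\ $\scal{x-a}{x^*-a^*}\geq 0$ for every $(a,a^*)\in\gra A$, so $(x,x^*)$ is monotonically related to $\gra A$; maximality then forces $(x,x^*)\in\gra A$.

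The only genuinely nontrivial ingredient is maximal monotonicity, invoked in the contrapositive form ``$(x,x^*)$ monotonically related to $\gra A$ implies $(x,x^*)\in\gra A$''; the rest is the one-line identity plus elementary properties of infima. I therefore expect no real obstacle: the crux is simply recognizing that the supremum defining $F_A$ is the negative of an infimum of monotonicity products, after which both parts drop out of maximality.
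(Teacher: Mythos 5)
Your proof is correct. Note, however, that the paper does not prove this statement at all: it is quoted as a \emph{Fact} with a citation to Fitzpatrick's original paper (Corollary~3.9 there), so there is no internal proof to compare against. Your argument --- rewriting $F_A(x,x^*)=\scal{x}{x^*}-\inf_{(a,a^*)\in\gra A}\scal{x-a}{x^*-a^*}$ and then invoking the characterization of maximality via monotonically related points (justified by your graph-extension argument) --- is precisely the standard proof of this result, so there is no obstacle and nothing missing; the only implicit point worth noticing is that a maximally monotone operator has nonempty graph, which your Case~2 extension argument in fact also establishes.
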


\begin{fact}[Fitzpatrick]\emph{(See
 \cite[Proposition~4.2 and Theorem~4.3]{Fitz88}.)}\label{fa:Fitza2}
Let $A:X\rightrightarrows X^*$ be a
 monotone operator with $\dom A\neq\varnothing$. Then
$F^{*}_A=\langle\cdot,\cdot\rangle$ on
 $\gra A^{-1}$, $F^{*}_A(x^*,x)\geq F_A(x,x^*),\,\forall(x,x^*)\in X\times X^*$,
 and
\begin{align*}
\big\{x\in X\mid \exists x^*\in X^*\,\text{such that}\,\,
 F^*_A(x^*,x)<+\infty\big\}\subseteq\overline{\conv(\dom A)}.
\end{align*}
\end{fact}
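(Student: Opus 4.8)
The plan is to exploit that $F_A$ is itself a conjugate function. First I would record, directly from monotonicity, that $F_A$ agrees with the duality pairing on the graph: for $(a,a^*)\in\gra A$ the supremum defining $F_A(a,a^*)$ is attained at $(a,a^*)$, since every competing term satisfies $\scal{a}{b^*}+\scal{b}{a^*}-\scal{b}{b^*}=\scal{a}{a^*}-\scal{a-b}{a^*-b^*}\leq\scal{a}{a^*}$, so $F_A(a,a^*)=\scal{a}{a^*}$. It is convenient to introduce the symmetric pairing $b\big((x,x^*),(y,y^*)\big):=\scal{x}{y^*}+\scal{y}{x^*}$, so that $F_A(x,x^*)=\sup_{(a,a^*)\in\gra A}\big(b((x,x^*),(a,a^*))-\scal{a}{a^*}\big)$; moreover, identifying $X$ with its canonical image in $X^{**}$ and unwinding the definition of the conjugate, $F_A^*(x^*,x)=\sup_{(y,y^*)\in X\times X^*}\big(b((x,x^*),(y,y^*))-F_A(y,y^*)\big)$, where the symmetry $b((y,y^*),(x,x^*))=b((x,x^*),(y,y^*))$ is used.

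For the inequality $F_A^*(x^*,x)\geq F_A(x,x^*)$, I would restrict the supremum in this last expression to $(y,y^*)\in\gra A$, where $F_A(y,y^*)=\scal{y}{y^*}$; the restricted supremum is precisely $\sup_{(y,y^*)\in\gra A}\big(b((x,x^*),(y,y^*))-\scal{y}{y^*}\big)=F_A(x,x^*)$. For the equality on $\gra A^{-1}$, fix $(a,a^*)\in\gra A$. The inequality just proved gives $F_A^*(a^*,a)\geq F_A(a,a^*)=\scal{a}{a^*}$. Conversely, $(a,a^*)$ is an admissible index in the supremum defining $F_A(y,y^*)$, so $F_A(y,y^*)\geq b((a,a^*),(y,y^*))-\scal{a}{a^*}$ for every $(y,y^*)$; substituting this into the formula for $F_A^*(a^*,a)$ bounds every term by $\scal{a}{a^*}$, giving $F_A^*(a^*,a)\leq\scal{a}{a^*}$.

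The inclusion is where the actual work lies, and I would establish its contrapositive by separation. Suppose $x\in X\setminus\overline{\conv(\dom A)}$. Since the latter set is closed and convex, Hahn--Banach yields $x_0^*\in X^*$ with $\scal{x}{x_0^*}>\sigma:=\sup_{d\in\dom A}\scal{d}{x_0^*}$, so $\delta:=\scal{x}{x_0^*}-\sigma>0$. Fixing any $(a,a^*)\in\gra A$ (this is where $\dom A\neq\varnothing$ enters) and testing the supremum defining $F_A^*(x^*,x)$ along $(y,y^*)=(a,a^*+tx_0^*)$ as $t\to+\infty$, I would bound $F_A(a,a^*+tx_0^*)\leq\scal{a}{a^*}+t\sigma$, using $F_A(a,a^*)=\scal{a}{a^*}$ and $\scal{b}{x_0^*}\leq\sigma$ for all $b\in\dom A$. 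Then the tested quantity is at least $\big(\scal{a}{x^*}+\scal{x}{a^*}-\scal{a}{a^*}\big)+t\delta\to+\infty$, so $F_A^*(x^*,x)=+\infty$ for every $x^*\in X^*$, which is the asserted inclusion.

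I expect the main obstacle to be this final inclusion. The two identities are essentially formal once $F_A$ is recognized as a conjugate, whereas the domain statement needs both the correct separating functional and the right one-parameter family along which to drive the conjugate to $+\infty$; securing the estimate $F_A(a,a^*+tx_0^*)\leq\scal{a}{a^*}+t\sigma$ with a slope that makes the net blow up is the crux.
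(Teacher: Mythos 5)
This statement is a quoted Fact, cited to Fitzpatrick's 1988 paper; the present paper contains no proof of it, so there is nothing internal to compare against. Your argument is correct and is essentially the standard (Fitzpatrick-style) one: the two conjugacy identities follow formally from monotonicity (which gives $F_A=\scal{\cdot}{\cdot}$ on $\gra A$) together with the order-reversing property of conjugation, and the domain inclusion follows from Hahn--Banach separation of $x$ from $\overline{\conv(\dom A)}$, with the one-parameter family $(a,a^*+tx_0^*)$ driving $F_A^*(x^*,x)$ to $+\infty$; all estimates you use, in particular $F_A(a,a^*+tx_0^*)\leq\scal{a}{a^*}+t\sigma$, check out.
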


The following three results are
 the fundamental characterizations of the domain of a maximally monotone operator.
\begin{fact}[Simons]
\emph{(See \cite[Theorem~27.1]{Si2}.)}
\label{interodom}
Let $A:X\To X^*$ be a maximally monotone operator. Then
\begin{align*}
\inte{\dom A}=\inte\left[\conv \dom A\right]=\inte \left[P_X\dom F_A\big)\right].
\end{align*}
\end{fact}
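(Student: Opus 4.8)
The plan is to prove the two equalities by establishing a cyclic chain of inclusions at the level of interiors. First I would record the two easy inclusions. Since $\dom A\subseteq\conv\dom A$ and passing to interiors is monotone, $\inte\dom A\subseteq\inte[\conv\dom A]$. For the second, every $x\in\dom A$ admits $x^*\in Ax$, whence $F_A(x,x^*)=\scal{x}{x^*}<\pinf$ by Fact~\ref{f:Fitz}, so $x\in P_X\dom F_A$ and thus $\dom A\subseteq P_X\dom F_A$. As $F_A$ is convex, $\dom F_A$ is convex and so is its linear image $P_X\dom F_A$; hence $\conv\dom A\subseteq P_X\dom F_A$ and $\inte[\conv\dom A]\subseteq\inte[P_X\dom F_A]$. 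It remains to close the loop with the one substantial inclusion $\inte[P_X\dom F_A]\subseteq\inte\dom A$.

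For this, fix $x_0\in\inte[P_X\dom F_A]$; I will show $x_0\in\dom A$, which suffices since $\inte[P_X\dom F_A]$ is open and therefore lands inside $\inte\dom A$. The strategy is to produce $\bar u^*\in X^*$ with $F_A(x_0,-\bar u^*)=\scal{x_0}{-\bar u^*}$, for then $(x_0,-\bar u^*)\in\gra A$ by the equality case of Fact~\ref{f:Fitz}. I would apply the Attouch--Brezis theorem (Fact~\ref{AttBre:1}) on the Banach space $X\times X^*$ to the two proper lower semicontinuous convex functions $F_A$ (proper because $\gra A\neq\varnothing$) and $g:=\iota_{\{x_0\}\times X^*}$. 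The qualification hypothesis asks that $\bigcup_{\lambda>0}\lambda[\dom F_A-\dom g]$ be a closed subspace; here $\dom F_A-\dom g=(P_X\dom F_A-x_0)\times X^*$, and because $x_0$ is an interior point of $P_X\dom F_A$ the cone $\bigcup_{\lambda>0}\lambda(P_X\dom F_A-x_0)$ is all of $X$, so the set in question is $X\times X^*$, a closed subspace. This is the only place the interiority hypothesis enters, and it is precisely what makes the duality applicable.

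Computing $g^*$ on $X^*\times X^{**}$ gives $g^*(u^*,x^{**})=\scal{x_0}{u^*}+\iota_{\{0\}}(x^{**})$, so the Attouch--Brezis formula evaluated at $(0,x_0)\in X^*\times X^{**}$ (using $x_0\in X\subseteq X^{**}$) yields a genuine minimum
\begin{equation*}
(F_A+g)^*(0,x_0)=\min_{u^*\in X^*}\big[F_A^*(-u^*,x_0)+\scal{x_0}{u^*}\big],
\end{equation*}
attained at some $\bar u^*$. On one hand, $(F_A+g)^*(0,x_0)=\sup_{x^*}\big(\scal{x_0}{x^*}-F_A(x_0,x^*)\big)\le 0$ by Fact~\ref{f:Fitz}. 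On the other hand, using $F_A^*(-\bar u^*,x_0)\ge F_A(x_0,-\bar u^*)\ge\scal{x_0}{-\bar u^*}$ from Fact~\ref{fa:Fitza2} and Fact~\ref{f:Fitz}, the minimal value is $\ge 0$. Hence both bounds are $0$ and every intermediate inequality is forced to be an equality; in particular $F_A(x_0,-\bar u^*)=\scal{x_0}{-\bar u^*}$, giving $(x_0,-\bar u^*)\in\gra A$ and $x_0\in\dom A$.

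The main obstacle is this hard inclusion, and its genuine content is twofold: recognizing that the right objects to feed into the Attouch--Brezis duality are $F_A$ and the indicator of the fiber $\{x_0\}\times X^*$, and checking that interiority of $x_0$ yields exactly the closed-subspace qualification. Once the duality is in force, the \emph{attainment} of the minimum guaranteed by Fact~\ref{AttBre:1} is essential: it is the minimizer $\bar u^*$ that is pinned into $\gra A$ by the sandwich of inequalities, whereas a mere infimum would not let one conclude $x_0\in\dom A$. The bookkeeping with the canonical embedding $X\hookrightarrow X^{**}$ and the conjugate pairing on $X^*\times X^{**}$ is routine but must be tracked carefully.
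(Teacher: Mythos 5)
Your proof is correct, but there is no proof in the paper to compare it with: the statement is quoted there as a Fact, with its proof deferred to \cite[Theorem~27.1]{Si2}. Judged on its own terms, your argument is complete. The two elementary inclusions are as you say, and the substantial one, $\inte\left[P_X\dom F_A\right]\subseteq\inte\dom A$, is handled correctly: Fact~\ref{AttBre:1} does apply on the Banach space $X\times X^*$ to $F_A$ (proper and lower semicontinuous because $\gra A\neq\varnothing$) and $g:=\iota_{\{x_0\}\times X^*}$, since $\dom F_A-\dom g=\left(P_X\dom F_A-x_0\right)\times X^*$ and the interiority of $x_0$ makes $\bigcup_{\lambda>0}\lambda\left[\dom F_A-\dom g\right]=X\times X^*$; evaluating the resulting duality formula at $(0,x_0)\in X^*\times X^{**}$, the upper bound $(F_A+g)^*(0,x_0)\leq 0$ from Fact~\ref{f:Fitz}, the lower bound from $F_A^*(-\bar u^*,x_0)\geq F_A(x_0,-\bar u^*)\geq\scal{x_0}{-\bar u^*}$ (Facts~\ref{fa:Fitza2} and~\ref{f:Fitz}), and the attained minimizer $\bar u^*$ together force $F_A(x_0,-\bar u^*)=\scal{x_0}{-\bar u^*}$, so that $(x_0,-\bar u^*)\in\gra A$ by the equality case of Fact~\ref{f:Fitz}. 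Your remark that attainment of the minimum (not merely the value of an infimum) is what closes the argument is exactly right, as is your identification of where interiority enters. By way of comparison, the classical route to this theorem, going back to Rockafellar, runs through local boundedness of a monotone operator at interior points of $\conv\dom A$ together with a maximality and weak$^*$ cluster-point argument; your Fenchel-duality/Fitzpatrick-function argument is the modern convex-analytic alternative, and it has the virtue of being entirely self-contained within the facts this paper already quotes (Facts~\ref{AttBre:1}, \ref{f:Fitz}, and~\ref{fa:Fitza2}).
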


\begin{fact}
\emph{(See \cite[Theorem~3.6]{BY2} or \cite{BY3}.)}
\label{CoHull}
Let $A:X\To X^*$ be a maximally monotone operator. Then
\begin{align*}
\overline{\conv\left[\dom A\right]}=\overline{P_X\left[\dom F_A\right]}.
\end{align*}
\end{fact}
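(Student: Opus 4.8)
The plan is to prove the two inclusions separately; only one of them carries any weight.

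For the inclusion $\overline{\conv\dom A}\subseteq\overline{P_X\dom F_A}$, I would first note that $\dom A\subseteq P_X\dom F_A$: if $x\in\dom A$ and $x^*\in Ax$, then $(x,x^*)\in\gra A$, so Fact~\ref{f:Fitz} gives $F_A(x,x^*)=\scal{x}{x^*}<+\infty$, whence $x\in P_X\dom F_A$. Since $F_A$ is a supremum of continuous affine functions it is convex and lower semicontinuous, so $\dom F_A$ is convex and its projection $P_X\dom F_A$ is convex as well. Therefore $\conv\dom A\subseteq P_X\dom F_A$, and passing to closures yields the claimed inclusion. This direction is routine.

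The substance is the reverse inclusion, for which it suffices to show $P_X\dom F_A\subseteq\overline{\conv\dom A}$ (the right-hand side being closed); equivalently, that $F_A(x_0,x_0^*)<+\infty$ for some $x_0^*$ forces $x_0\in\overline{\conv\dom A}$. I would argue by contradiction using Hahn--Banach separation: if $x_0\notin\overline{\conv\dom A}$, there exist $z^*\in X^*$ and $\sigma\in\RR$ with $\scal{a}{z^*}\le\sigma<\scal{x_0}{z^*}$ for every $a\in\dom A$; set $\delta:=\scal{x_0}{z^*}-\sigma>0$. The key mechanism is that maximal monotonicity lets one adjoin the separating direction to the range: if some $a\in\dom A$ attains $\scal{a}{z^*}=\sigma$ and $a^*\in Aa$, then for every $t\in\RP$ the pair $(a,a^*+tz^*)$ is monotonically related to $\gra A$, since for all $(b,b^*)\in\gra A$ one has $\scal{a-b}{a^*+tz^*-b^*}=\scal{a-b}{a^*-b^*}+t(\sigma-\scal{b}{z^*})\ge 0$; by maximality $a^*+tz^*\in Aa$. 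Feeding this ray into the Fitzpatrick function gives $F_A(x_0,x_0^*)\ge\scal{x_0}{a^*+tz^*}+\scal{a}{x_0^*}-\scal{a}{a^*+tz^*}=c+t\delta$ for a constant $c$, which tends to $+\infty$ as $t\to+\infty$, contradicting $F_A(x_0,x_0^*)<+\infty$. The borderline case $F_A(x_0,x_0^*)\le\scal{x_0}{x_0^*}$ is disposed of separately: combined with $F_A\ge\pscal$ from Fact~\ref{f:Fitz} it forces equality, so $(x_0,x_0^*)\in\gra A$ and $x_0\in\dom A\subseteq\overline{\conv\dom A}$.

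The genuinely delicate point, and the step where I expect essentially all the work to lie, is that the supremum $\sigma=\sup_{a\in\dom A}\scal{a}{z^*}$ need not be attained, so the ray construction cannot be applied verbatim. I would resolve this by transporting the argument to the bidual: the finiteness $F_A(x_0,x_0^*)<+\infty$ says precisely that $(x_0,x_0^*)$ lies in the domain of $F_A=F_A^{**}$, and Fact~\ref{fa:Fitza2} already pins the finiteness domain of the conjugate inside the target set, via $\{x\in X\mid\exists x^*\in X^*,\ F_A^*(x^*,x)<+\infty\}\subseteq\overline{\conv\dom A}$. Using weak$^*$ compactness of bounded subsets of $X^{**}$ (Banach--Alaoglu), a maximizing net $(a_\alpha)$ in $\dom A$ with $\scal{a_\alpha}{z^*}\to\sigma$ admits a weak$^*$ cluster point at which the separating direction can legitimately be adjoined, recovering the blow-up of the (conjugate) Fitzpatrick function and thereby placing $x_0$ in $\overline{\conv\dom A}$. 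This bidual attainment argument is the crux, and it is exactly why the conjugate information of Fact~\ref{fa:Fitza2}, rather than separation alone, is needed.
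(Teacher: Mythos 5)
Your easy inclusion is fine, and your ray argument is correct in the special case where $\sigma=\sup_{a\in\dom A}\scal{a}{z^*}$ is attained at some $a\in\dom A$. But the step you yourself call the crux --- the ``bidual attainment'' argument --- is a genuine gap, and it fails for structural reasons, not just technical ones. First, a maximizing net $(a_\alpha)$ in $\dom A$ need not be bounded (if the supremum is not attained, maximizing sequences can be forced to run off to infinity in norm), so Banach--Alaoglu gives no cluster point. Second, even granting a weak$^*$ cluster point $a^{**}\in X^{**}$, it will in general lie outside $X$, and the maximality needed to ``adjoin the separating direction'' at $a^{**}$ is maximality with respect to $X^{**}\times X^*$ --- i.e.\ ultramaximality, which is exactly what is \emph{not} assumed in this Fact. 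Without it you have no way to produce new points of $\gra A$; and adjoining points to some monotone \emph{extension} of $A$ in $X^{**}\times X^*$ cannot help, because enlarging the graph only \emph{increases} the associated Fitzpatrick-type function, so no blow-up of $F_A$ itself (a supremum over $\gra A\subseteq X\times X^*$ only) can be extracted, and finiteness of $F_A(x_0,x_0^*)$ is never contradicted. Third, your appeal to Fact~\ref{fa:Fitza2} points the wrong way: it gives $F_A^*(x^*,x)\geq F_A(x,x^*)$, so finiteness of $F_A(x_0,x_0^*)$ yields no upper bound on $F_A^*(x_0^*,x_0)$, and the inclusion $\{x\mid\exists x^*,\ F_A^*(x^*,x)<+\infty\}\subseteq\overline{\conv(\dom A)}$ is simply not applicable to your point.

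The gap closes easily once you tilt at $x_0$ instead of at a boundary point of $\dom A$; this removes both the attainment issue and your ``borderline case''. Suppose $M:=F_A(x_0,x_0^*)<+\infty$ and $x_0\notin\overline{\conv(\dom A)}$. Separation gives $z^*\in X^*$ and $\delta>0$ with $\scal{x_0-a}{z^*}\geq\delta$ for every $a\in\dom A$. Then for every $(a,a^*)\in\gra A$ and $t\geq0$,
\begin{align*}
\scal{x_0-a}{(x_0^*+tz^*)-a^*}
&=\scal{x_0-a}{x_0^*-a^*}+t\scal{x_0-a}{z^*}\\
&\geq\big(\scal{x_0}{x_0^*}-M\big)+t\delta,
\end{align*}
since $\scal{x_0-a}{x_0^*-a^*}=\scal{x_0}{x_0^*}-\big(\scal{x_0}{a^*}+\scal{a}{x_0^*}-\scal{a}{a^*}\big)\geq\scal{x_0}{x_0^*}-M$. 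For $t$ large enough the right-hand side is nonnegative, so $(x_0,x_0^*+tz^*)$ is monotonically related to $\gra A$; maximality of $A$ (in $X\times X^*$, which \emph{is} assumed) forces $(x_0,x_0^*+tz^*)\in\gra A$, hence $x_0\in\dom A\subseteq\overline{\conv(\dom A)}$, a contradiction. No attainment, no bidual, and no conjugate are needed. Note finally that the paper itself offers no proof of this statement --- it is quoted from \cite[Theorem~3.6]{BY2} --- and the short tilting argument above is essentially the mechanism behind the cited result.
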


Let $A:X\rightrightarrows X^*$ be maximally monotone. We say $A$ is
\emph{of type (FPV)} if  for every open convex set $U\subseteq X$ such that
$U\cap \dom A\neq\varnothing$, the implication
\begin{equation*}
x\in U\text{and}\,(x,x^*)\,\text{is monotonically related to $\gra A\cap U\times X^*$}
\Rightarrow (x,x^*)\in\gra A
\end{equation*}
holds.  We do not know if every maximally monotone operator
 is necessarily of type (FPV) in a general Banach space
  \cite{Si2, BorVan}, but it is true in reflexive spaces.
Every operator of type (NI)  is of type (FPV) (see
\cite[Theorem~9.10(b)]{Si5}).
\begin{fact}[Simons]
\emph{(See \cite[Theorem~44.2]{Si2}.)}
\label{f:referee0pv}
Let $A:X\To X^*$ be a maximally monotone
operator that is of type (FPV). Then
\begin{align*}
\overline{\dom A}=\overline{\conv \big(\dom A\big)}=\overline{P_X\big(\dom F_A\big)}.
\end{align*}
\end{fact}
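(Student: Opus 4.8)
The plan is to read the asserted chain of equalities as one free inclusion plus one substantive inclusion. The right-hand equality $\overline{\conv(\dom A)}=\overline{P_X(\dom F_A)}$ is exactly Fact~\ref{CoHull} and needs no further argument, since it holds for every maximally monotone operator. Likewise $\dom A\subseteq\conv(\dom A)$ yields the trivial inclusion $\overline{\dom A}\subseteq\overline{\conv(\dom A)}$. Thus the entire content of the statement is the reverse inclusion $\overline{\conv(\dom A)}\subseteq\overline{\dom A}$, i.e.\ that $\overline{\dom A}$ is convex. Combining this with Fact~\ref{CoHull}, it suffices to establish the single inclusion $P_X(\dom F_A)\subseteq\overline{\dom A}$: taking closures then gives $\overline{\conv(\dom A)}=\overline{P_X(\dom F_A)}\subseteq\overline{\dom A}$, so all three sets coincide.

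To prove $P_X(\dom F_A)\subseteq\overline{\dom A}$ I would argue by contradiction, using the defining implication of type (FPV). Fix $x$ with $F_A(x,x^*)<+\infty$ for some $x^*$, and suppose $x\notin\overline{\dom A}$. Note first that Fact~\ref{fa:Fitza2} together with Fact~\ref{CoHull} already places $x$ in $\overline{\conv(\dom A)}$, so $x$ is surrounded by $\dom A$ in the convex-hull sense even though it escapes $\overline{\dom A}$. The aim is to manufacture an open convex set $U$ with $x\in U$ and $U\cap\dom A\neq\varnothing$, together with a functional $\widetilde{x}^*\in X^*$, such that $(x,\widetilde{x}^*)$ is monotonically related to $\gra A\cap(U\times X^*)$, that is $\scal{x-a}{\widetilde{x}^*-a^*}\geq 0$ for all $(a,a^*)\in\gra A$ with $a\in U$. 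Since $x\notin\dom A$, the pair $(x,\widetilde{x}^*)$ lies outside $\gra A$; yet the type (FPV) implication would then force $(x,\widetilde{x}^*)\in\gra A$, whence $x\in\dom A\subseteq\overline{\dom A}$, a contradiction. This contradiction forces $x\in\overline{\dom A}$.

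The hard part is precisely this construction of $U$ and $\widetilde{x}^*$. Because $\dom A$ is not assumed convex, one cannot separate $x$ from $\dom A$ directly, and a small ball about $x$ may miss $\dom A$ entirely; the set $U$ must be taken large enough to capture points of $\dom A$ (for instance, an open ball, or the interior of a convex hull built from finitely many domain points approximating $x$, which exist because $x\in\overline{\conv(\dom A)}$) while still containing $x$ itself. The functional $\widetilde{x}^*$ realizing the one-sided monotone-relatedness on $U$ would be produced by a separation or minimax argument anchored at the finiteness $F_A(x,x^*)<+\infty$, feeding the Fenchel-type duality of Fact~\ref{AttBre:1} (or the partial inf-convolution duality of Fact~\ref{F4}) applied to $F_A$ localized to $U$. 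Balancing the two competing demands on $U$ --- that it meet $\dom A$ while still admitting such a functional at the escaped point $x$ --- is the genuine obstacle, and it is exactly here that the type (FPV) hypothesis does its essential work.
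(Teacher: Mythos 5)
Your reduction is correct, and your contradiction scheme is the right skeleton: by Fact~\ref{CoHull} and the trivial inclusion $\overline{\dom A}\subseteq\overline{\conv(\dom A)}$, everything rests on the single inclusion $P_X(\dom F_A)\subseteq\overline{\dom A}$, and the way to get it is indeed to build an open convex $U$ containing $x$ and meeting $\dom A$, plus a functional $\widetilde{x}^*$ making $(x,\widetilde{x}^*)$ monotonically related to $\gra A\cap (U\times X^*)$, and then let the (FPV) implication force the contradiction. But your proposal stops exactly where the theorem's content begins: the construction of $U$ and $\widetilde{x}^*$ is never carried out, and you concede it is ``the genuine obstacle.'' A proof that names its missing step still lacks that step, so this is a genuine gap. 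Two of your framing remarks also point the wrong way. The suggestion that $\widetilde{x}^*$ should come from Attouch--Brezis duality (Fact~\ref{AttBre:1}) or the partial inf-convolution theorem (Fact~\ref{F4}) ``localized to $U$'' is a red herring: no Fenchel duality is needed, only Hahn--Banach and the definition of $F_A$. More importantly, the claim that the type (FPV) hypothesis ``does its essential work'' inside the construction is backwards: the pair $(U,\widetilde{x}^*)$ must be produced for an arbitrary maximally monotone operator, using only maximality and the finiteness $F_A(x,x^*)<+\infty$; the (FPV) property is consumed exactly once, at the very end, to upgrade monotone-relatedness on $U$ to membership in $\gra A$.

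The missing construction can in fact be done in a few lines, which shows that the two ``competing demands'' you worry about are reconcilable. Suppose $F_A(x,x^*)<+\infty$ and $x\notin\overline{\dom A}$; pick $\epsilon>0$ with $2\epsilon\leq\di(x,\dom A)$, pick $a_0\in\dom A$, and set $U:=[a_0,x]+\tfrac{\epsilon}{2}B^{\circ}$, where $B^{\circ}$ is the open unit ball of $X$; then $U$ is open, convex, contains $x$, and contains $a_0\in\dom A$. By Hahn--Banach choose $y^*$ with $\|y^*\|=1$ and $\scal{x-a_0}{y^*}=\|x-a_0\|$. Any $a\in U\cap\dom A$ can be written as $a=(1-t)a_0+tx+\tfrac{\epsilon}{2}b$ with $t\in[0,1]$ and $\|b\|<1$, so that $x-a=(1-t)(x-a_0)-\tfrac{\epsilon}{2}b$; since $\|x-a\|\geq 2\epsilon$ this forces $(1-t)\|x-a_0\|\geq\tfrac{3\epsilon}{2}$, and hence $\scal{x-a}{y^*}\geq\tfrac{3\epsilon}{2}-\tfrac{\epsilon}{2}=\epsilon$. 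On the other hand, the definition of the Fitzpatrick function gives, for every $(a,a^*)\in\gra A$,
\begin{align*}
\scal{x-a}{x^*-a^*}\;\geq\;\scal{x}{x^*}-F_A(x,x^*)\;=:\;-M\;>\;-\infty .
\end{align*}
Taking $\widetilde{x}^*:=x^*+\lambda y^*$ with $\lambda>M/\epsilon$ yields $\scal{x-a}{\widetilde{x}^*-a^*}\geq \lambda\epsilon-M>0$ for all $(a,a^*)\in\gra A$ with $a\in U$, so type (FPV) gives $(x,\widetilde{x}^*)\in\gra A$, contradicting $\di(x,\dom A)\geq 2\epsilon>0$. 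Note finally that the paper offers no proof to compare against --- this Fact is quoted from Simons \cite[Theorem~44.2]{Si2} --- so your argument has to stand on its own, and without the above (or an equivalent) construction it does not.
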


Now we introduce some properties of operators of type (NI) (i.e., type (D)).
\begin{fact}[Gossez]\label{GozztyD:1}
{\rm (See \cite[Theorem~3.1]{Gossez3} or \cite[Theorem~48.4(b)]{Si2}.)}
Let $f:X\rightarrow \RX$ be proper lower semicontinuous and convex.
Then $\partial f$ is maximally monotone of type (NI).
\end{fact}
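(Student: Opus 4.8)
The plan is to establish the two assertions separately: maximal monotonicity of $\partial f$ is Rockafellar's classical theorem, so the real work is the type (NI) inequality, which I would verify straight from its definition. Fix $(x^{**},x^*)\in X^{**}\times X^*$. Expanding $\scal{x^{**}-a}{x^*-a^*}=\scal{x^{**}}{x^*}-\scal{x^{**}}{a^*}-\scal{a}{x^*}+\scal{a}{a^*}$ and invoking the Fenchel--Young equality $\scal{a}{a^*}=f(a)+f^*(a^*)$ valid on $\gra\partial f$, the required bound $\inf_{(a,a^*)\in\gra\partial f}\scal{x^{**}-a}{x^*-a^*}\le 0$ is equivalent to producing, for each $\varepsilon>0$, some $(a,a^*)\in\gra\partial f$ with $\scal{x^{**}-a}{a^*-x^*}\ge-\varepsilon$. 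Setting $h:=f-\scal{\cdot}{x^*}$ (proper, lower semicontinuous, convex, with $\inf_X h=-f^*(x^*)$) and $c^*:=a^*-x^*\in\partial h(a)$, I would reduce everything to the single inequality $\sup_{(a,c^*)\in\gra\partial h}\scal{x^{**}-a}{c^*}\ge 0$.

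The heart of the argument is the case $x^*\in\dom f^*$, where $\inf_X h>-\infty$. First I would choose, for a given $\varepsilon>0$, an approximate minimizer $a_0\in X$ with $h(a_0)\le\inf_X h+\varepsilon$, so that $0\in\partial_{\varepsilon}h(a_0)$, and record the finite number $M:=\|x^{**}-a_0\|$. Then I would apply the Br\o ndsted--Rockafellar theorem at $a_0$ with the free scaling parameter set to $\beta:=\varepsilon/(1+M)$, obtaining $(\bar a,\bar c^*)\in\gra\partial h$ with $\|\bar c^*\|\le\beta$ and $\|\bar a-a_0\|\le\varepsilon/\beta$. Splitting
\[
\scal{x^{**}-\bar a}{\bar c^*}=\scal{x^{**}-a_0}{\bar c^*}+\scal{a_0-\bar a}{\bar c^*}\ge-M\beta-(\varepsilon/\beta)\beta>-2\varepsilon
\]
then gives the supremum $\ge 0$. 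I expect this to be the main obstacle, and the subtlety is exactly non-reflexivity: as $\varepsilon\downarrow 0$ the approximate minimizers $a_0$ may escape to infinity, so one cannot simply drive $\bar c^*$ to zero and hope the product vanishes. The device that defeats this is to \emph{freeze $a_0$ (and hence $M$) before selecting $\beta$}, which forces the dangerous term $M\beta$ to stay below $\varepsilon$ no matter how large $M$ is. Since this step uses nothing about $x^{**}$, it settles every $x^{**}\in X^{**}$ as soon as $x^*\in\dom f^*$.

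It remains to treat $x^*\notin\dom f^*$, where $\inf_X h=-\infty$. Here I would argue that the supremum is in fact $+\infty$. Because $\dom h^*=\dom f^*-x^*$ is nonempty, $h$ lies above an affine minorant and is therefore bounded below on every bounded set; consequently any sequence with $h(b_n)\to-\infty$ must have $\|b_n\|\to\infty$. Rewriting $\scal{x^{**}-a}{c^*}=\scal{x^{**}}{c^*}-h(a)-h^*(c^*)$ on $\gra\partial h$, it suffices to produce $(a_n,c^*_n)\in\gra\partial h$ with $h(a_n)\to-\infty$ along which $c^*_n$ stays bounded and $h^*(c^*_n)$ bounded above, for then $\scal{x^{**}-a_n}{c^*_n}\to+\infty$. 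Such a sequence is extracted by a further application of Br\o ndsted--Rockafellar along a path on which $h$ decreases to $-\infty$; this is routine bookkeeping rather than a genuine difficulty.

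Putting the two cases together verifies the type (NI) inequality for all $(x^{**},x^*)\in X^{**}\times X^*$, which together with Rockafellar's maximal monotonicity yields the claim.
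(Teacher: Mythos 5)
Your Case 1 ($x^*\in\dom f^*$) is correct and complete: freezing the approximate minimizer $a_0$ (hence $M$) before choosing the Br{\o}ndsted--Rockafellar parameter $\beta$ is exactly the right device, and it yields the type (NI) inequality at every $(x^{**},x^*)$ with $x^*\in\dom f^*$. Note that the paper does not prove this Fact at all (it cites Gossez and Simons), and the known proofs do not resemble yours, so this part is a genuinely elementary and self-contained argument.

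The gap is Case 2, and it is not ``routine bookkeeping''. You propose to extract $(a_n,c_n^*)\in\gra\partial h$ with $h(a_n)\to-\infty$, $\|c_n^*\|$ bounded and $h^*(c_n^*)$ bounded above; such a sequence need not exist, even on $X=\RR$. Take $h(a)=-2\sqrt{-a}$ for $a\le 0$ and $h(a)=+\infty$ for $a>0$; then $h^*(c^*)=1/c^*$ for $c^*>0$ and $h^*(c^*)=+\infty$ otherwise, so $\dom h^*\neq\varnothing$ and $\inf h=-\infty$. Here $\gra\partial h=\{(a,1/\sqrt{-a}):a<0\}$ and $\partial h(0)=\varnothing$, so any graph sequence with $h(a_n)\to-\infty$ forces $h^*(c_n^*)=\sqrt{-a_n}\to+\infty$: your three conditions are mutually incompatible, and no application of Br{\o}ndsted--Rockafellar can produce them. (The desired conclusion $\sup\scal{x^{**}-a}{c^*}=+\infty$ does hold in this example, but only because $\scal{a_n}{c_n^*}=h(a_n)+h^*(c_n^*)=-\sqrt{-a_n}\to-\infty$ despite $h^*(c_n^*)\to+\infty$; the condition you actually need is ``$c_n^*$ bounded and $\scal{a_n}{c_n^*}\to-\infty$''.) Proving that such graph points always exist when $\inf h=-\infty$ is the real crux: unlike Case 1, the base points now necessarily escape to infinity, so the term $\|x^{**}-b_n\|\cdot\|c_n^*\|$ can no longer be frozen before choosing the scaling parameter, which is precisely the trick your Case 1 relied on. This missing step is essentially the whole content of Gossez's theorem in the nonreflexive setting; the proofs in the literature (Gossez's original argument, Simons' Theorem 48.4(b), or strong representability of $f\oplus f^*$ in the sense of Marques Alves--Svaiter and Voisei--Z\u{a}linescu) all invoke machinery well beyond Br{\o}ndsted--Rockafellar to handle exactly this case.
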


The following result provides a sufficient condition for
 the sum operator to be of type (NI).
\begin{fact}
\emph{(See \cite[Theorem~1.2]{MarSva2} and \cite[Corollary~3.5]{ZalVoiRe} or \cite[Corollary~18]{SiQ8}.)}
\label{NISum}
Let $A, B:X\rightrightarrows X^*$ be
maximally monotone of type (NI).  Assume that $\bigcup_{\lambda>0}
\lambda\left[\dom A-\dom B\right]$ is a closed subspace.
Then $A+B$ is maximally monotone of type (NI).
\end{fact}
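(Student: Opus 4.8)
The statement is a sum theorem, and my plan is to prove it via Fitzpatrick functions together with the partial inf-convolution duality of Fact~\ref{F4}. The first step is a dual reformulation of type (NI). For a maximally monotone $A$ and $(x^{**},x^*)\in X^{**}\times X^*$ the algebraic identity $\scal{x^{**}}{a^*}+\scal{a}{x^*}-\scal{a}{a^*}=\scal{x^{**}}{x^*}-\scal{x^{**}-a}{x^*-a^*}$ shows that the type (NI) inequality $\inf_{(a,a^*)\in\gra A}\scal{x^{**}-a}{x^*-a^*}\le0$ is equivalent to $\scal{x^{**}}{x^*}\le\sup_{(a,a^*)\in\gra A}\big(\scal{x^{**}}{a^*}+\scal{a}{x^*}-\scal{a}{a^*}\big)$, the right-hand side being the natural extension of $F_A$ to $X^{**}\times X^*$. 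Restricting the supremum defining $F_A^*$ to $\gra A$, where $F_A=\scal{\cdot}{\cdot}$ by Fact~\ref{f:Fitz}, shows this extension is bounded above by $F_A^*(x^*,x^{**})$; consequently type (NI) of $A$ gives the clean estimate $F_A^*(x^*,x^{**})\ge\scal{x^{**}}{x^*}$ for every $(x^{**},x^*)\in X^{**}\times X^*$, and symmetrically for $B$.

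Next I would introduce the candidate representative function $H:=F_A\Box_2 F_B$. Since $F_A,F_B\ge\scal{\cdot}{\cdot}$ and $\scal{x}{u^*}+\scal{x}{v^*}=\scal{x}{x^*}$ whenever $u^*+v^*=x^*$, one gets $H\ge\scal{\cdot}{\cdot}$ on $X\times X^*$; conversely, for $(x,x^*)\in\gra(A+B)$, choosing $a^*\in Ax$ and $b^*\in Bx$ with $a^*+b^*=x^*$ forces $H(x,x^*)\le F_A(x,a^*)+F_B(x,b^*)=\scal{x}{x^*}$, so $\gra(A+B)\subseteq\{H=\scal{\cdot}{\cdot}\}$. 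To apply Fact~\ref{F4} I must verify the constraint qualification that $\bigcup_{\lambda>0}\lambda[P_X\dom F_A-P_X\dom F_B]$ is a closed subspace; here I would use Fact~\ref{CoHull} (and Fact~\ref{interodom}), which give $\overline{P_X\dom F_A}=\clconv\dom A$ and the like, to transfer the hypothesis that $\bigcup_{\lambda>0}\lambda[\dom A-\dom B]$ is a closed subspace to the required form. Fact~\ref{F4} then delivers properness and lower semicontinuity of $H$ and the exact formula $H^*(x^*,x^{**})=\min_{u^*\in X^*}\big(F_A^*(x^*-u^*,x^{**})+F_B^*(u^*,x^{**})\big)$.

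The estimates of the first paragraph now close the argument: for the minimizing $u^*$ (indeed for every $u^*\in X^*$), type (NI) of $A$ and $B$ give $F_A^*(x^*-u^*,x^{**})\ge\scal{x^{**}}{x^*-u^*}$ and $F_B^*(u^*,x^{**})\ge\scal{x^{**}}{u^*}$, whose sum is $\scal{x^{**}}{x^*}$; hence $H^*(x^*,x^{**})\ge\scal{x^{**}}{x^*}$ for all $(x^{**},x^*)\in X^{**}\times X^*$. Thus $H$ is a proper lower semicontinuous convex function with $H\ge\scal{\cdot}{\cdot}$ and $H^*\ge\scal{\cdot}{\cdot}$, which is precisely the representative-function criterion forcing the operator whose graph is $\{H=\scal{\cdot}{\cdot}\}$ to be maximally monotone of type (NI); combined with the forward inclusion above, this yields that $A+B$ is maximally monotone of type (NI).

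I expect two genuine obstacles. The first is transferring the constraint qualification from $\dom A,\dom B$ to $P_X\dom F_A,P_X\dom F_B$: the inclusions $\dom A\subseteq P_X\dom F_A$ and the identity $\overline{P_X\dom F_A}=\clconv\dom A$ (Fact~\ref{CoHull}) show the two difference cones share a closure, but verifying that the $F$-version is itself a closed subspace, as Fact~\ref{F4} demands, needs the domain facts combined with an Attouch--Brezis type argument. The second, and more substantial, is the representative-function criterion invoked at the end, namely that $H\ge\scal{\cdot}{\cdot}$ together with $H^*\ge\scal{\cdot}{\cdot}$ forces the equality set $\{H=\scal{\cdot}{\cdot}\}$ to be exactly the graph of a maximally monotone operator of type (NI): the forward inclusion $\gra(A+B)\subseteq\{H=\scal{\cdot}{\cdot}\}$ is easy, but its converse and the maximality are what make this the decisive step, and it is exactly where the cited references do their work.
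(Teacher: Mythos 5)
First, a point of orientation: the paper does not prove Fact~\ref{NISum} at all --- it is quoted from \cite{MarSva2}, \cite{ZalVoiRe} and \cite{SiQ8} --- so the relevant comparison is with those proofs and with the closely related machinery the paper builds for its own Theorem~\ref{PropLeP:1}, namely Lemma~\ref{Co:1} and Proposition~\ref{F12}. Your strategy is exactly that of the literature: the dual reformulation of type (NI) as $F_A^*(x^*,x^{**})\geq\scal{x^{**}}{x^*}$, the candidate representative $H:=F_A\Box_2 F_B$, the transfer of the constraint qualification from $\dom A-\dom B$ to $P_X\dom F_A-P_X\dom F_B$ (your plan via Fact~\ref{CoHull} works and is precisely the content of Lemma~\ref{Co:1}), the application of Fact~\ref{F4}, and the appeal to the strong-representability theorem \cite[Theorem~1.2]{MarSva2}. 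However, there is a genuine gap at the decisive final step, and it has two parts. (a) Applying Fact~\ref{F4} to the pair $(F_A,F_B)$ yields only the formula for $H^*$ with attainment on the dual side; contrary to what you claim, it does \emph{not} deliver lower semicontinuity of $H$ (which the representability criterion requires), nor does it show that the infimum defining $H$ itself is attained. (b) Consequently your closing inference is invalid: from $\gra(A+B)\subseteq\{H=\scal{\cdot}{\cdot}\}=\gra M$ with $M$ maximally monotone of type (NI) you cannot conclude $A+B=M$, because a monotone operator can be properly contained in a maximally monotone one (e.g., any restriction of $M$ to a smaller domain). What is missing is the reverse inclusion $\{H=\scal{\cdot}{\cdot}\}\subseteq\gra(A+B)$, and nothing in your setup provides it.

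The reverse inclusion is where exactness of the partial inf-convolution is indispensable: if $H(x,x^*)=\scal{x}{x^*}$ and the infimum is attained at some $v^*$, then the two nonnegative quantities $F_A(x,x^*-v^*)-\scal{x}{x^*-v^*}$ and $F_B(x,v^*)-\scal{x}{v^*}$ sum to zero, hence both vanish, and Fact~\ref{f:Fitz} gives $(x,x^*-v^*)\in\gra A$ and $(x,v^*)\in\gra B$, so $(x,x^*)\in\gra(A+B)$. The way to obtain simultaneously properness, norm$\times$weak* lower semicontinuity, and exactness of $H$ is the flip trick of the paper's Proposition~\ref{F12}: apply Fact~\ref{F4} not to $(F_A,F_B)$ but to $F_1(x,x^*):=F_A^*(x^*,x)$ and $F_2(x,x^*):=F_B^*(x^*,x)$; since $F_A$ and $F_B$ are norm$\times$weak* lower semicontinuous, their biconjugates recover them, and the conclusion of Fact~\ref{F4} then exhibits $H=F_A\Box_2 F_B$ itself as a conjugate function given by a pointwise minimum. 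With Proposition~\ref{F12} in place of your direct application of Fact~\ref{F4}, the equality set of $H$ is exactly $\gra(A+B)$, the criterion of \cite[Theorem~1.2]{MarSva2} applies to $H$, and your argument closes; without it, the proof as written does not go through.
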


\begin{fact}[Marques Alves and Svaiter]
\emph{(See \cite[Theorem~4.4]{MarSva}.)}
\label{MSRDC}
Let $A:X\rightrightarrows X$ be
maximally monotone of type (NI), and $F:X\times X^*\rightarrow\RX$
 be proper (norm) lower semicontinuous and convex.
  Let $B:X^{*}\rightrightarrows X^{**}$ be defined by
\begin{align*}
\gra B:=\big\{(x^*, x^{**})\in X^*\times X^{**}\mid
\langle x^*-a^*, x^{**}-a\rangle\geq0,\,\forall (a,a^*)\in\gra A\big\}.
\end{align*} Assume that
$F$ is a representer for $\gra A$.
Then $F^*$ is a representer for $\gra B$ .
\end{fact}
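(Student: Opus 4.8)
The plan is to prove the two inclusions that together identify $\gra B$ with the equality set of $F^*$, where $F^*$ is taken with respect to the canonical pairing of $X\times X^*$ with $(X\times X^*)^*=X^*\times X^{**}$, so that for $(x^*,x^{**})\in X^*\times X^{**}$
\[
F^*(x^*,x^{**})=\sup_{(x,y^*)\in X\times X^*}\big(\langle x,x^*\rangle+\langle x^{**},y^*\rangle-F(x,y^*)\big),
\]
each $a\in X$ being identified with its canonical image in $X^{**}$. I would organize everything around three estimates: a global lower bound $F^*(x^*,x^{**})\geq\langle x^{**},x^*\rangle$, the routine implication that an equality $F^*(x^*,x^{**})=\langle x^{**},x^*\rangle$ forces $(x^*,x^{**})\in\gra B$, and the substantive reverse inclusion. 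Recall that a representer $F$ satisfies $F\geq\langle\cdot,\cdot\rangle$ with equality exactly on $\gra A$.

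First I would establish the lower bound, which is precisely where type (NI) enters. Fix $(x^*,x^{**})$ and $\varepsilon>0$. The negative-infimum condition (Definition~\ref{trio}) produces $(a,a^*)\in\gra A$ with $\langle x^{**}-a,x^*-a^*\rangle\leq\varepsilon$, i.e. $\langle a,x^*\rangle+\langle x^{**},a^*\rangle-\langle a,a^*\rangle\geq\langle x^{**},x^*\rangle-\varepsilon$; feeding $(a,a^*)$ into the supremum defining $F^*$ and using $F(a,a^*)=\langle a,a^*\rangle$ gives $F^*(x^*,x^{**})\geq\langle x^{**},x^*\rangle-\varepsilon$, and $\varepsilon\downarrow0$ finishes. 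The easy inclusion is the dual computation: if $F^*(x^*,x^{**})=\langle x^{**},x^*\rangle$, testing the supremum against any $(a,a^*)\in\gra A$ yields $\langle a,x^*\rangle+\langle x^{**},a^*\rangle-\langle a,a^*\rangle\leq\langle x^{**},x^*\rangle$, that is $\langle x^{**}-a,x^*-a^*\rangle\geq0$, so $(x^*,x^{**})\in\gra B$; note this half uses only $F=\langle\cdot,\cdot\rangle$ on $\gra A$.

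The reverse inclusion $\gra B\subseteq\{F^*=\langle\cdot,\cdot\rangle\}$ is the heart of the proof, and given the lower bound it reduces to showing $F^*(x^*,x^{**})\leq\langle x^{**},x^*\rangle$ whenever $(x^*,x^{**})\in\gra B$. I would first pass to the Fitzpatrick function $F_A$. A short convexity argument shows $F\geq F_A$ for every representer: for $(a,a^*)\in\gra A$ and arbitrary $(x,x^*)$, apply convexity of $F$ along the segment from $(a,a^*)$ to $(x,x^*)$, use $F\geq\langle\cdot,\cdot\rangle$ together with $F(a,a^*)=\langle a,a^*\rangle$, and let the segment parameter tend to $0$ to get $F(x,x^*)\geq\langle x,a^*\rangle+\langle a,x^*\rangle-\langle a,a^*\rangle$; taking the supremum over $\gra A$ gives $F\geq F_A$, hence $F^*\leq F_A^*$. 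Since $F_A$ dominates each affine map $(x,x^*)\mapsto\langle x,a^*\rangle+\langle a,x^*\rangle-\langle a,a^*\rangle$, a one-line conjugation also yields $F_A^*(a^*,a)\leq\langle a,a^*\rangle$ for every $(a,a^*)\in\gra A$.

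The final step exploits that $A$, being of type (NI), is of type (D). For $(x^*,x^{**})\in\gra B$ the defining inequality reads $\inf_{(a,a^*)\in\gra A}\langle a-x^{**},a^*-x^*\rangle\geq0$, so the dense-type property (Definition~\ref{trio}) furnishes a bounded net $(a_\alpha,a^*_\alpha)$ in $\gra A$ with $a_\alpha\weaklys x^{**}$ in $X^{**}$ and $a^*_\alpha\to x^*$ in $X^*$. Then $(a^*_\alpha,a_\alpha)\to(x^*,x^{**})$ in the weak$^*$ topology of $(X\times X^*)^*$, while boundedness and $a^*_\alpha\to x^*$ strongly give $\langle a_\alpha,a^*_\alpha\rangle\to\langle x^{**},x^*\rangle$. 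As a Fenchel conjugate, $F_A^*$ is weak$^*$ lower semicontinuous, so
\[
F_A^*(x^*,x^{**})\leq\liminf_\alpha F_A^*(a^*_\alpha,a_\alpha)\leq\liminf_\alpha\langle a_\alpha,a^*_\alpha\rangle=\langle x^{**},x^*\rangle,
\]
the middle inequality being the bound $F_A^*(a^*_\alpha,a_\alpha)\leq\langle a_\alpha,a^*_\alpha\rangle$ from the previous paragraph. Thus $F^*(x^*,x^{**})\leq F_A^*(x^*,x^{**})\leq\langle x^{**},x^*\rangle$, closing the argument. I expect this last step to be the main obstacle: one must transport an upper bound through a conjugate, and the crucial subtlety is the direction of semicontinuity — it is lower semicontinuity of $F_A^*$ evaluated along values already sitting below $\langle a_\alpha,a^*_\alpha\rangle$ that yields an upper bound in the limit. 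This is exactly why the type (D) weak$^*\times$strong approximation of $\gra B$ by points of $\gra A$, rather than mere monotone relatedness, is indispensable.
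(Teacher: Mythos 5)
The paper offers no proof of this statement to compare against: it is imported as a quoted Fact, with the proof deferred to Marques Alves and Svaiter \cite[Theorem~4.4]{MarSva}. Judged on its own, your argument is a correct and essentially self-contained reconstruction of that result, and its architecture is sound: type (NI) gives the global lower bound $F^*\geq\langle\cdot,\cdot\rangle$ on $X^*\times X^{**}$; the inclusion $\{F^*=\langle\cdot,\cdot\rangle\}\subseteq\gra B$ is the routine conjugacy estimate using only $F=\langle\cdot,\cdot\rangle$ on $\gra A$; and the reverse inclusion correctly combines $F\geq F_A$ (hence $F^*\leq F_A^*$), the bound $F_A^*(a^*,a)\leq\langle a,a^*\rangle$ on $\gra A^{-1}$ (this is contained in the paper's Fact~\ref{fa:Fitza2}, so you could cite it rather than re-derive it), the type (D) bounded net with $a_\alpha\weaklys x^{**}$ and $a^*_\alpha\to x^*$, and weak$^*$ lower semicontinuity of $F_A^*$ as a conjugate. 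The two analytic subtleties are handled correctly: boundedness of $(a_\alpha)$ together with \emph{strong} convergence of $(a^*_\alpha)$ is what gives $\langle a_\alpha,a^*_\alpha\rangle\to\langle x^{**},x^*\rangle$, and lower semicontinuity applied along values already lying below the pairing is what turns into an upper bound in the limit.

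One point needs flagging. Your proof invokes ``a representer $F$ satisfies $F\geq\langle\cdot,\cdot\rangle$ with equality exactly on $\gra A$.'' That majorization is \emph{not} part of this paper's definition of representer, which prescribes only the equality set; and it cannot be dispensed with, since it is precisely what makes your segment argument for $F\geq F_A$ run. Without it the Fact is false. Take $X=\RR$, $A=\Id$, and
\begin{align*}
F(x,y)=\tfrac{1}{4}(x+y)^2+\phi(x-y),\qquad
\phi(u)=\begin{cases}\tfrac{3}{4}u^2-2u,& u\leq 1;\\ +\infty,& u>1,\end{cases}
\end{align*}
which is proper, lower semicontinuous and convex. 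Then $F(x,y)-xy=(x-y)(x-y-2)$ when $x-y\leq1$ and $+\infty$ otherwise, so the equality set of $F$ is exactly $\gra\Id$, i.e., $F$ is a representer for $\gra A$ in the paper's sense (here $F<\langle\cdot,\cdot\rangle$ at points with $x-y=1$); yet $F^*(a,a)=a^2+\tfrac{5}{4}\neq a^2$, so $F^*$ is not a representer for $\gra B$ (which is again the diagonal, $\RR$ being reflexive). In the cited source the hypothesis $h\geq\langle\cdot,\cdot\rangle$ is built into the definition of a representative function, and every representer actually used in this paper, such as $F_A\Box_2F_B$ in Theorem~\ref{PropLeP:1}, dominates the pairing by Fact~\ref{f:Fitz}; so your proof establishes the theorem in the form in which it is stated in \cite{MarSva} and used here. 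But relative to the paper's literal definition, your ``recall'' is an added --- and indispensable --- hypothesis, not a recollection; you should state it as such.
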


\begin{fact}[Phelps and Simons]
\emph{(See \cite[Corollary 2.6 and Proposition~3.2(h)]{PheSim}.)}\label{FPS:1}
Let $A\colon X\rightarrow X^*$ be  monotone and linear. Then $A$ is
maximally monotone and continuous.
\end{fact}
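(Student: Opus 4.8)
The plan is to treat the two assertions separately, dealing with continuity first, since it is the genuinely nontrivial part, and then deducing maximal monotonicity by an elementary perturbation argument. Throughout I would use that, by linearity, monotonicity of $A$ is equivalent to the single condition $\scal{x}{Ax}\ge 0$ for all $x\in X$ (apply the definition to the pair $(x,Ax)$, $(0,0)$, or more generally replace $x-y$ by a single vector via $A(x-y)=Ax-Ay$).

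For \emph{continuity}, I would pass through the symmetric positive-semidefinite bilinear form $[x,y]:=\tfrac12\big(\scal{x}{Ay}+\scal{y}{Ax}\big)$, which satisfies $[x,x]=\scal{x}{Ax}\ge 0$ and hence obeys the Cauchy--Schwarz inequality $|[x,y]|\le [x,x]^{1/2}[y,y]^{1/2}$. The goal is then to invoke the closed graph theorem, so I would verify that $\gra A$ is closed: if $x_n\to x$ and $Ax_n\to x^*$ in norm, I set $z_n:=x_n-x$, so that $z_n\to 0$ and $Az_n\to x^*-Ax=:w^*$. For each fixed $y\in X$ one has $[z_n,y]\to\tfrac12\scal{y}{w^*}$, while $[z_n,z_n]=\scal{z_n}{Az_n}\to 0$ because $z_n\to 0$ and the sequence $(Az_n)$ is norm-bounded. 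Cauchy--Schwarz then forces $[z_n,y]\to 0$, so $\scal{y}{w^*}=0$ for every $y\in X$, whence $w^*=0$ and $x^*=Ax$. Thus $\gra A$ is closed, and the closed graph theorem yields continuity of $A$.

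For \emph{maximal monotonicity}, I would take any $(y,y^*)\in X\times X^*$ that is monotonically related to $\gra A$ and show $y^*=Ay$. Testing the relation $\scal{x-y}{Ax-y^*}\ge 0$ against $x=y+tz$ with $t>0$ and arbitrary $z\in X$ gives, after expanding by linearity and dividing by $t$, the inequality $\scal{z}{Ay-y^*}+t\scal{z}{Az}\ge 0$. Letting $t\downarrow 0$ yields $\scal{z}{Ay-y^*}\ge 0$ for all $z$, and replacing $z$ by $-z$ upgrades this to $\scal{z}{Ay-y^*}=0$ for every $z\in X$. Hence $y^*=Ay$ and $(y,y^*)\in\gra A$, which is exactly maximal monotonicity.

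The main obstacle is the continuity claim: unlike maximal monotonicity, it is not a formal consequence of linearity and monotonicity but genuinely requires a boundedness principle. The crucial ingredients are the Cauchy--Schwarz inequality for the semidefinite form $[\cdot,\cdot]$, which transfers control of the diagonal $[z_n,z_n]$ to the off-diagonal pairings $[z_n,y]$, together with the closed graph theorem. The one delicate point to check carefully is that $[z_n,z_n]\to 0$, which relies on the norm-boundedness of $(Az_n)$ supplied by the convergence $Ax_n\to x^*$; everything else is routine.
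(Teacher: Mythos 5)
Your proof is correct in both parts. Note first that the paper itself offers no proof of this statement: it is recorded as a Fact with a citation to Phelps and Simons \cite{PheSim}, so there is no in-paper argument to compare against; what you have done is reconstruct a self-contained proof of the cited result, and it holds up. The continuity argument is sound: the form $[x,y]=\tfrac12\big(\scal{x}{Ay}+\scal{y}{Ax}\big)$ is symmetric, bilinear and positive semidefinite by monotonicity, Cauchy--Schwarz applies, $[z_n,z_n]=\scal{z_n}{Az_n}\to 0$ by norm-boundedness of the convergent sequence $(Az_n)$, and the closed graph theorem (both $X$ and $X^*$ being Banach) finishes it. The maximality argument via $x=y+tz$, dividing by $t$ and letting $t\downarrow 0$, is the standard one and is complete. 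One structural remark: your maximality proof is purely algebraic and nowhere uses continuity, so you could have ordered the argument to make the Cauchy--Schwarz machinery unnecessary --- once maximal monotonicity is established, closedness of $\gra A$ follows immediately, since if $x_n\to x$ and $Ax_n\to x^*$ in norm then passing to the limit in $\scal{x_n-y}{Ax_n-Ay}\ge 0$ shows $(x,x^*)$ is monotonically related to $\gra A$, hence $x^*=Ax$ by maximality; the closed graph theorem then gives continuity. This is a minor streamlining, not a flaw; your version is equally valid.
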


Combining the above Fact~\ref{FPS:1},
we have the following result.
\begin{fact}[Bauschke and Simons]{\rm (See
\cite[Theorem~2.3 and Corollary~3.7]{BS1}.)}\label{PropLe:2}
Let $A:X\rightrightarrows X^*$ be a monotone
linear relation with $\ran A=X^*$.
 Assume that $A$ is at most single-valued.
 Then $A^{-1}$ is single-valued and linear
 monotone  $X^*$ to $X^{**}$ with $\dom A^{-1}=X^*$.
  In consequence, $A^{-1}$ is continuous and then $A$ is ultramaximally monotone.

\end{fact}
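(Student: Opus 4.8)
The plan is to read off the asserted properties of $A^{-1}$ directly from the linear-subspace structure of $\gra A$, and then to apply Fact~\ref{FPS:1} with the Banach space $X^*$ playing the role of $X$. First I would settle the easy structural points. Since $\ran A=X^*$, every functional lies in the range, so $\dom A^{-1}=\ran A=X^*$. Moreover $\gra A^{-1}$ is obtained from the linear subspace $\gra A$ merely by interchanging the two coordinates, so it is again a linear subspace; hence $A^{-1}$ is a linear relation with full domain $X^*$, and monotonicity of $A^{-1}$ is immediate, because the defining inequality $\scal{x-y}{x^*-y^*}\geq0$ is symmetric in its two slots and the canonical embedding $X\hookrightarrow X^{**}$ respects the pairing $\scal{\cdot}{\cdot}$.

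The one place where monotonicity and surjectivity must genuinely interact is the single-valuedness of $A^{-1}$; this is where I would put the real work. It suffices to show $A^{-1}0=\{0\}$, i.e.\ that $(x,0)\in\gra A$ forces $x=0$. Given such an $x$ and an arbitrary $x^*\in X^*$, surjectivity supplies $u\in X$ with $(u,x^*)\in\gra A$, and linearity of $\gra A$ gives $(tx+u,x^*)\in\gra A$ for every $t\in\RR$. For a monotone linear relation one has $\scal{z}{z^*}\geq0$ for all $(z,z^*)\in\gra A$, obtained by applying monotonicity to $(z,z^*)$ and $(0,0)\in\gra A$. Hence $t\scal{x}{x^*}+\scal{u}{x^*}\geq0$ for all real $t$, and letting $t\to\pm\infty$ forces $\scal{x}{x^*}=0$. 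As $x^*$ ranges over all of $X^*$ this yields $x=0$. Thus $A^{-1}$ is a genuine single-valued linear operator from $X^*$ into $X\subseteq X^{**}$. (Note that the surjectivity and monotonicity already deliver this, so the hypothesis that $A$ be at most single-valued is not what drives the argument.)

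With these pieces in hand, $A^{-1}$ is a monotone linear operator from the Banach space $X^*$ into its dual $(X^*)^*=X^{**}$. Applying Fact~\ref{FPS:1} with $X^*$ in place of $X$ immediately gives that $A^{-1}$ is continuous and maximally monotone as an operator $X^*\To X^{**}$; that is, $\gra A^{-1}$ is a maximal monotone subset of $X^*\times X^{**}$. Interchanging the two coordinates preserves both the monotonicity inequality and maximality, so the swapped set---which is precisely $\gra A$ regarded inside $X^{**}\times X^*$ through $X\hookrightarrow X^{**}$---is maximal monotone in $X^{**}\times X^*$. By definition this says that $A$ is ultramaximally monotone, completing the argument.

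The routine parts are the domain computation and the linearity of $A^{-1}$; the two substantive points are the single-valuedness argument, where surjectivity is used to probe every functional direction, and the concluding bookkeeping that identifies maximal monotonicity of $A^{-1}$ in $X^*\times X^{**}$ with ultramaximal monotonicity of $A$. I expect the main obstacle to be nothing deep but rather the careful tracking of the canonical embedding $X\hookrightarrow X^{**}$ and of which pairing is in force, so that the pairing between $X^*$ and $X^{**}$ restricts correctly to the one between $X$ and $X^*$; once that identification is made explicit, both the swap step and the invocation of Fact~\ref{FPS:1} are routine.
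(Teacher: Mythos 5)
Your proof is correct, and it follows the route the paper intends: the paper records this statement as a Fact quoted from \cite{BS1} and says only that it is obtained by ``combining Fact~\ref{FPS:1}'', which is precisely your argument---show that $A^{-1}$ is a single-valued, monotone linear operator with $\dom A^{-1}=X^*$, apply the Phelps--Simons result (Fact~\ref{FPS:1}) with $X^*$ in place of $X$, and carry maximal monotonicity in $X^*\times X^{**}$ back through the coordinate swap to conclude that $A$ is ultramaximally monotone. The only addition on your side is a self-contained proof of the part the paper outsources to \cite[Theorem~2.3]{BS1}, namely the single-valuedness of $A^{-1}$ (letting $t\to\pm\infty$ in $t\scal{x}{x^*}+\scal{u}{x^*}\geq 0$), and your side remark that the ``at most single-valued'' hypothesis on $A$ is not actually needed for the stated conclusions is also correct.
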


During the 1970s Brezis
and Browder presented a now classical characterization of maximal
monotonicity of monotone linear relations in reflexive spaces
 \cite{BB76,Brezis-Browder,Si3}.
The following result is their result in the setting of a general real Banach space.
(See also \cite{Si3} and \cite{Si7} for Simons' recent extensions in
(SSDB)
spaces as defined in \cite[\S21]{Si2} and of Banach SNL spaces.)
 Recently,  Stephen Simons strengthens Fact~\ref{thm:bbwy} in \cite{Si8}.
\begin{fact}[Brezis-Browder in general Banach space] \label{thm:bbwy}
\emph{(See \cite[Theorem~4.1]{BBWY5}.)}
Let $A\colon X\rightrightarrows X^*$ be a monotone linear
 relation such that $\gra A$ is closed.
Then  $A$ is maximally monotone of type (NI) if
and only if  $A^*$ is monotone.
\end{fact}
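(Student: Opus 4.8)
The plan is to reduce both hypotheses to a single infimum condition and then observe that the forward implication is a one-line substitution, while the reverse implication is an attainment problem for which duality is needed. The whole argument rests on the elementary identity
\begin{equation*}
\scal{y^{**}-a}{y^*-a^*}=\scal{y^{**}}{y^*}-\scal{y^{**}}{a^*}-\scal{a}{y^*}+\scal{a}{a^*},
\end{equation*}
valid for $(y^{**},y^*)\in X^{**}\times X^*$ and $(a,a^*)\in\gra A$. Since $A$ is a linear relation, monotonicity of $A$ reads $\scal{a}{a^*}\ge0$ for all $(a,a^*)\in\gra A$, the adjoint condition $(y^{**},y^*)\in\gra A^*$ reads $\scal{y^{**}}{a^*}=\scal{a}{y^*}$ for all $(a,a^*)\in\gra A$, and monotonicity of the linear relation $A^*$ is equivalent to $\scal{y^{**}}{y^*}\ge0$ for all $(y^{**},y^*)\in\gra A^*$. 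Type (NI) is the requirement that $\inf_{(a,a^*)\in\gra A}\scal{x^{**}-a}{x^*-a^*}\le0$ for every $(x^{**},x^*)$.

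For the forward implication I would assume $A$ is of type (NI), take $(y^{**},y^*)\in\gra A^*$, and apply the type (NI) inequality at the bidual point $(-y^{**},y^*)$. Substituting the adjoint relation $\scal{y^{**}}{a^*}=\scal{a}{y^*}$ into the identity above makes the cross terms cancel, leaving $\scal{-y^{**}-a}{y^*-a^*}=-\scal{y^{**}}{y^*}+\scal{a}{a^*}$. Taking the infimum over $\gra A$ and using $\inf_{(a,a^*)\in\gra A}\scal{a}{a^*}=0$ (the form is nonnegative on the subspace $\gra A$ and vanishes at the origin) gives infimum $-\scal{y^{**}}{y^*}$, which type (NI) forces to be $\le0$; hence $\scal{y^{**}}{y^*}\ge0$ and $A^*$ is monotone.

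The hard part is the reverse implication. Assuming $A^*$ monotone, the guiding idea is a stationarity computation: if the infimum defining type (NI) at $(x^{**},x^*)$ is \emph{attained} at some $(\bar a,\bar a^*)\in\gra A$, then differentiating $t\mapsto\scal{x^{**}-\bar a-th}{x^*-\bar a^*-th^*}$ at $t=0$ in every admissible direction $(h,h^*)\in\gra A$ yields $\scal{h}{x^*-\bar a^*}=\scal{\bar a-x^{**}}{h^*}$, i.e. $(\bar a-x^{**},\,x^*-\bar a^*)\in\gra A^*$. Setting $(y^{**},y^*):=(\bar a-x^{**},x^*-\bar a^*)$, the optimal value is exactly $\scal{x^{**}-\bar a}{x^*-\bar a^*}=-\scal{y^{**}}{y^*}\le0$ by the assumed monotonicity of $A^*$, which is precisely type (NI) at $(x^{**},x^*)$. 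Running the same computation over points of $X\times X^*$ (where $\bar a-x^{**}$ lands in $X$) is the classical Brezis--Browder mechanism and furnishes maximal monotonicity of $A$; combining the two gives \emph{maximally monotone of type (NI)}.

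The main obstacle, and the reason the theorem is nontrivial outside reflexive spaces, is justifying attainment of $\inf_{(a,a^*)\in\gra A}\scal{x^{**}-a}{x^*-a^*}$ over the closed subspace $\gra A$, since the minimizer naturally wants to live in the bidual. I would obtain it by Fenchel duality applied to $g:=\scal{\cdot}{\cdot}+\iota_{\gra A}$, which is proper, lower semicontinuous (as $\gra A$ is closed) and convex (its quadratic part is nonnegative on $\gra A$ by monotonicity): the type (NI) value equals $\scal{x^{**}}{x^*}-g^*(x^*,x^{**})$, so an \emph{exact} conjugate formula supplies the minimizer. Here the closed-subspace constraint qualification lets me invoke the Attouch--Brezis theorem (Fact~\ref{AttBre:1}) or the Simons--Z\u{a}linescu partial inf-convolution formula (Fact~\ref{F4}) to guarantee that the relevant infimum is attained (or realized as a weak$^*$ limit), and the delicate point I expect to absorb most of the work is phrasing the resulting stationarity condition through $A^*$ in the bidual rather than through $A$ itself.
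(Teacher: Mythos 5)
The paper itself offers no proof of this statement: it is quoted as a Fact, with the proof delegated to \cite[Theorem~4.1]{BBWY5}, so your attempt has to be measured against that cited argument. Your forward implication is correct, and it is genuinely the easy half: for $(y^{**},y^*)\in\gra A^*$, evaluating the (NI) infimum at $(-y^{**},y^*)$, cancelling the cross terms via the adjoint identity $\scal{y^{**}}{a^*}=\scal{a}{y^*}$, and using $\inf_{(a,a^*)\in\gra A}\scal{a}{a^*}=0$ yields $-\scal{y^{**}}{y^*}\le 0$, as you say.

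The reverse implication has a fatal gap, located exactly where you placed your bet: attainment of $\inf_{(a,a^*)\in\gra A}\scal{x^{**}-a}{x^*-a^*}$ is simply false in nonreflexive spaces, even for operators satisfying the conclusion of the theorem, so no constraint qualification can deliver it. Concretely, take $X=c_0$ (so $X^*=\ell^1$, $X^{**}=\ell^\infty$) and $A\colon c_0\to\ell^1\colon a\mapsto(\lambda_na_n)_{n\in\NN}$ with all $\lambda_n>0$ and $\sum_n\lambda_n<\infty$. Then $A$ is a continuous monotone linear operator with closed graph, its adjoint $x^{**}\mapsto(\lambda_nx_n^{**})_{n\in\NN}$ is monotone, and in fact $A=\partial f$ for the continuous convex function $f\colon a\mapsto\tfrac12\sum_n\lambda_na_n^2$, so $A$ is maximally monotone of type (NI) by Fact~\ref{GozztyD:1}. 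Nevertheless, for $x^{**}:=(1,1,1,\dots)\in\ell^\infty$ and $x^*:=0$,
\begin{equation*}
\inf_{a\in c_0}\;\scal{x^{**}-a}{x^*-Aa}
=\inf_{a\in c_0}\;\sum_{n\in\NN}\lambda_n\big(a_n^2-a_n\big)
=-\tfrac14\sum_{n\in\NN}\lambda_n,
\end{equation*}
and this infimum is not attained, because the unique minimizer over $\ell^\infty$ is $(\tfrac12,\tfrac12,\dots)\notin c_0$. Hence the stationarity mechanism on which your whole reverse direction rests cannot even be started. Note also that the tools you invoke do not say what you need them to say: Attouch--Brezis (Fact~\ref{AttBre:1}) and Simons--Z\u{a}linescu (Fact~\ref{F4}) give exactness of the \emph{dual} inf-convolution $\min_{y^*}\left[f^*(y^*)+g^*(z^*-y^*)\right]$, not attainment of the \emph{primal} supremum defining $g^*(x^*,x^{**})$; the latter would mean $\ran\partial\big(\iota_{\gra A}+\scal{\cdot}{\cdot}\big)=X^*\times X^{**}$, a surjectivity statement of exactly the ``ultramaximal'' kind this paper shows is special. (Neither fact even applies to the splitting $\scal{\cdot}{\cdot}+\iota_{\gra A}$, since the pairing is not convex on $X\times X^*$; and retreating to weak$^*$ limits in $X^{**}\times X^*$ begs the question, since nonnegativity of the quadratic form on the weak$^*$ closure of $\gra A$ is essentially what is being proved.) The cited proof in \cite{BBWY5} is designed around this obstruction: instead of producing minimizers, it establishes the equivalent conjugate inequality $h^*(x^*,x^{**})\ge\scal{x^{**}}{x^*}$ on $X^*\times X^{**}$ for the convex representative $h:=\iota_{\gra A}+\scal{\cdot}{\cdot}$ (your (NI) value equals $\scal{x^{**}}{x^*}-h^*(x^*,x^{**})$) directly from the monotonicity of $A^*$, and then deduces maximal monotonicity of type (NI) by representability arguments in the spirit of Fact~\ref{MSRDC} and the sources of Fact~\ref{NISum}.
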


The next lemma is trivial but helpful.
\begin{lemma}\label{PropEXT:1}
Let $f:X\rightarrow\RX$ be proper lower semicontinuous and
 convex with $\inte\dom f\neq\varnothing$. Assume that
$\partial f$ is ultramaximally monotone.
Then $X$ is reflexive.
\end{lemma}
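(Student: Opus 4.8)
The plan is to translate the hypothesis into a statement about the conjugate $f^*$ and then to exhibit a norm-open ball of $X^{**}$ that is forced to lie inside the (closed) canonical image of $X$; since a subspace of $X^{**}$ that contains a norm-open ball must be all of $X^{**}$, this gives $X=X^{**}$, i.e.\ reflexivity. Throughout I identify $X$ with its canonical image in $X^{**}$, which is a closed subspace.

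First I would record a functional-analytic reformulation of ultramaximality, namely $\ran\partial f^*\subseteq X$. Recall that $x^*\in\partial f(x)$ is equivalent to the Fenchel--Young equality $f(x)+f^*(x^*)=\scal{x}{x^*}$, and since $f^{**}=f$ on $X$ this says $x\in\partial f^*(x^*)$; hence $\gra(\partial f)^{-1}\subseteq\gra\partial f^*$ under $X\hookrightarrow X^{**}$. Now take any $(x^*,x^{**})\in\gra\partial f^*$. For every $(a,a^*)\in\gra\partial f$ we have $(a^*,a)\in\gra\partial f^*$, so monotonicity of the subdifferential $\partial f^*$ (on the Banach space $X^*$) yields $\scal{x^{**}-a}{x^*-a^*}\geq 0$. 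Thus $(x^{**},x^*)$ is monotonically related to $\gra\partial f$ in $X^{**}\times X^*$, and ultramaximality forces $(x^{**},x^*)\in\gra\partial f$; in particular $x^{**}\in X$. As $(x^*,x^{**})\in\gra\partial f^*$ was arbitrary, this proves $\ran\partial f^*\subseteq X$, and therefore $\overline{\ran\partial f^*}\subseteq X$ since $X$ is closed in $X^{**}$.

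Next I would use $\inte\dom f\neq\varnothing$ to place a norm-open ball of $X^{**}$ inside $\dom f^{**}$. Fix $x_0\in\inte\dom f$; by continuity of $f$ at $x_0$ there are $r>0$ and $M\in\RR$ with $f\leq M$ on $x_0+rB_X$, whence $f^*(x^*)\geq\scal{x_0}{x^*}+r\|x^*\|-M$ for all $x^*$. Dualizing, every $x^{**}$ with $\|x^{**}-x_0\|<r$ satisfies $f^{**}(x^{**})\leq M$, so the norm-open ball $B:=\{x^{**}\in X^{**}\mid\|x^{**}-x_0\|<r\}$ lies in $\dom f^{**}$. Finally I invoke the Br\o ndsted--Rockafellar theorem for the proper lower semicontinuous convex function $f^*$ on $X^*$: every point of $\dom(f^*)^*=\dom f^{**}$ is a norm-limit of points of $\ran\partial f^*$. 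Indeed, given $x^{**}\in\dom f^{**}$ and $\epsilon>0$, choose $x^*$ with $f^*(x^*)+f^{**}(x^{**})\leq\scal{x^{**}}{x^*}+\epsilon$, so that $x^{**}\in\partial_\epsilon f^*(x^*)$, and apply Br\o ndsted--Rockafellar to obtain an exact subgradient within norm $\sqrt\epsilon$. Hence $\dom f^{**}\subseteq\overline{\ran\partial f^*}\subseteq X$, so $B\subseteq X$; an open ball of $X^{**}$ contained in the subspace $X$ gives $X=X^{**}$, completing the proof.

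The step I expect to be most delicate is the inclusion $\dom f^{**}\subseteq\overline{\ran\partial f^*}$. The naive hope---that $f^{**}$, being continuous on $B$, is subdifferentiable there with a subgradient in $X^*$, which would at once put $B$ inside $\ran\partial f^*$---fails, because the subgradients furnished by continuity live a priori in $X^{***}$, and the supremum defining $f^{**}(x^{**})$ need not be attained in $X^*$ when $x^{**}\notin X$. The Br\o ndsted--Rockafellar theorem circumvents attainment by delivering only approximate (norm-close) exact subgradients; this loss is harmless here precisely because $\ran\partial f^*$ has already been shown to sit inside the \emph{closed} subspace $X$, so norm-density is as good as membership.
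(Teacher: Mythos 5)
Your proof is correct and follows essentially the same route as the paper's: a norm ball of $X^{**}$ around an interior point of $\dom f$ is placed inside $\dom f^{**}$, the Br{\o}ndsted--Rockafellar theorem gives $\dom f^{**}\subseteq\overline{\ran\partial f^{*}}$, and ultramaximality forces $\gra \partial f^{*}=\gra(\partial f)^{-1}$ so that this closure lies in $X$, yielding $X=X^{**}$. The only cosmetic difference is that the paper obtains the ball inclusion $x_0+\delta B_{X^{**}}\subseteq\dom f^{**}$ via Goldstine's theorem and weak* lower semicontinuity of $f^{**}$, whereas you dualize the local bound on $f$ through $f^{*}$; both are routine, and your spelled-out monotone-relatedness argument is exactly the justification behind the paper's terse claim that ultramaximality upgrades the inclusion $\gra(\partial f)^{-1}\subseteq\gra\partial f^{*}$ to an equality.
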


\begin{proof} By the assumption, let $x_0\in \inte\dom f$.
 By \cite[Proposition~3.3]{ph}, there exist
$\delta, M>0$ such that
\begin{align}
\sup_{y\in x_0+\delta B_X} f(y)\leq M.\label{SDRL:c1}
\end{align}
Now we show that
\begin{align}
x_0+\delta B_{X^{**}}\subseteq\dom f^{**}.\label{SDRL:c2}
\end{align}
Let $x^{**}\in x_0+\delta B_{X^{**}}$.
By Goldstine's theorem (see \cite[Theorem~2.6.26]{Megg}), there exists
a net
 $(x_{\alpha})_{\alpha\in I}$ in $x_0+\delta B_X$
such that $x_{\alpha}\weakstarly x^{**}$.  Then by \eqref{SDRL:c1}, we have
\begin{align*}
f^{**}(x^{**})\leq\liminf f^{**}(x_{\alpha})=\liminf f(x_{\alpha})\leq M.
\end{align*}
Hence $x^{**}\in\dom f^{**}$ and thus \eqref{SDRL:c2} holds.

By Br{\o}ndsted-Rockafellar theorem (see \cite[Theorem~3.1.2]{Zalinescu}),
\eqref{SDRL:c2} shows that
\begin{align}
x_0+\delta B_{X^{**}}\subseteq\dom f^{**}\subseteq
\overline{\ran\partial f^*}.\label{CorLemmC:1}
\end{align}
On the other hand, \cite[Theorem~2.4.4(iv)]{Zalinescu}
implies that $\gra (\partial f)^{-1}
\subseteq\gra f^*$, by the assumption that
$\partial f$ is ultramaximally monotone, we have
\begin{align*}\gra \big(\partial f\big)^{-1}
=\gra \partial f^*.
\end{align*}
Thus, \eqref{CorLemmC:1} shows that $x_0+\delta B_{X^{**}}
\subseteq\overline{\dom \partial f}\subseteq X$.
Then we have $X^{**}=X$.  Hence we have $X$ is reflexive.
\end{proof}

\section{Our main results}\label{s:main}

\subsection{Properties of ultramaximally monotone operators}
In Theorem~\ref{PropLeP:1}, we provide a sufficient condition for
 the sum operator to be ultramaximally monotone.
 We first need the following two technical results.

\begin{lemma}\label{Co:1}
Let $A, B\colon X\To X^*$ be  maximally monotone,
 and suppose that
$\bigcup_{\lambda>0}\lambda\left[\dom A-\dom B\right]$
 is a closed subspace of $X$.
Set
\begin{align*}
E&:=\big\{x\in X\mid \exists x^*\in X^*\,
\text{such that}\,\, F^*_A(x^*,x)<+\infty\big\}
\end{align*}
and
\begin{align*}
F&:=\big\{x\in X\mid \exists x^*\in X^*\,
\text{such that}\,\, F^*_B(x^*,x)<+\infty\big\}.
\end{align*}
Then
\begin{equation*}\bigcup_{\lambda>0}\lambda\left[\dom A-\dom B\right]=
\bigcup_{\lambda>0}\lambda\left[E-F\right].
\end{equation*}
Moreover, if $A$ and $B$ are of  type (FPV), then we have
\begin{align*}\bigcup_{\lambda>0}\lambda\left[\dom A-\dom B\right]=
\bigcup_{\lambda>0}\lambda\left[P_{X}\dom F_A-P_{X}\dom F_B\right].\end{align*}
\end{lemma}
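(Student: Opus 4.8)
The plan is to sandwich each relevant set between a domain and the closed convex hull of that domain, and then to use the closed-subspace hypothesis to force all of the displayed cones to collapse onto one another. Throughout, write $S:=\bigcup_{\lambda>0}\lambda[\dom A-\dom B]$, which by hypothesis is a closed subspace of $X$; the whole proof amounts to showing that $\bigcup_{\lambda>0}\lambda[E-F]$ (and, in the moreover part, $\bigcup_{\lambda>0}\lambda[P_X\dom F_A-P_X\dom F_B]$) is squeezed between $S$ and $S$.

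First I would establish the basic sandwich
\[
\dom A\subseteq E\subseteq\overline{\conv(\dom A)},\qquad \dom B\subseteq F\subseteq\overline{\conv(\dom B)}.
\]
The right-hand inclusions are precisely the final display of Fact~\ref{fa:Fitza2}. For the left-hand inclusions, take $x\in\dom A$ and $x^*\in Ax$; then $(x^*,x)\in\gra A^{-1}$, so Fact~\ref{fa:Fitza2} gives $F^*_A(x^*,x)=\scal{x}{x^*}<+\infty$ and hence $x\in E$, and symmetrically for $B$. Taking differences and generating cones, these inclusions immediately yield
\[
\bigcup_{\lambda>0}\lambda[\dom A-\dom B]\subseteq\bigcup_{\lambda>0}\lambda[E-F]\subseteq\bigcup_{\lambda>0}\lambda\big[\overline{\conv(\dom A)}-\overline{\conv(\dom B)}\big].
\]

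The crux of the argument, and the step I expect to require the most care, is to close this chain by proving $\overline{\conv(\dom A)}-\overline{\conv(\dom B)}\subseteq S$. For finite convex combinations $c=\sum_i\alpha_i a_i\in\conv(\dom A)$ and $d=\sum_j\beta_j b_j\in\conv(\dom B)$ (with $a_i\in\dom A$, $b_j\in\dom B$), the bilinear identity $c-d=\sum_{i,j}\alpha_i\beta_j(a_i-b_j)$ expresses $c-d$ as a linear combination of elements $a_i-b_j\in\dom A-\dom B\subseteq S$; since $S$ is a subspace, $c-d\in S$, so $\conv(\dom A)-\conv(\dom B)\subseteq S$. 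Passing to closures, if $c_k\to x$ and $d_k\to y$ with each $c_k-d_k\in S$, then closedness of $S$ gives $x-y\in S$. As a subspace, $S$ is a cone, so $\bigcup_{\lambda>0}\lambda\big[\overline{\conv(\dom A)}-\overline{\conv(\dom B)}\big]\subseteq S$. Combining this with the two inclusions above and the defining equality $S=\bigcup_{\lambda>0}\lambda[\dom A-\dom B]$ forces all three cones to equal $S$, which is the first assertion.

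For the moreover part I would rerun the same argument with $E$ replaced by $P_X\dom F_A$ (and $F$ by $P_X\dom F_B$). The inclusion $\dom A\subseteq P_X\dom F_A$ holds because for $x\in\dom A$ and $x^*\in Ax$, Fact~\ref{f:Fitz} gives $F_A(x,x^*)=\scal{x}{x^*}<+\infty$, so $(x,x^*)\in\dom F_A$; the inclusion $P_X\dom F_A\subseteq\overline{\conv(\dom A)}$ is immediate from $\overline{\conv(\dom A)}=\overline{P_X(\dom F_A)}$, which is Fact~\ref{f:referee0pv} under the type (FPV) assumption (and symmetrically for $B$). With $\dom A\subseteq P_X\dom F_A\subseteq\overline{\conv(\dom A)}$ in hand, the identical three-term chain, together with the inclusion $\overline{\conv(\dom A)}-\overline{\conv(\dom B)}\subseteq S$ already secured above, collapses $\bigcup_{\lambda>0}\lambda[P_X\dom F_A-P_X\dom F_B]$ to $S$ and gives the second equality. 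The only genuinely delicate point in the whole proof remains the closure passage, where one must check that moving from $\conv(\dom A)-\conv(\dom B)$ to its closure does not escape the closed subspace $S$.
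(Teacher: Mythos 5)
Your proof is correct and follows essentially the same route as the paper's: sandwich $E$ and $F$ (respectively $P_X\dom F_A$ and $P_X\dom F_B$, via Fact~\ref{f:referee0pv}) between the domains and their closed convex hulls, then collapse the resulting chain of cones onto $\bigcup_{\lambda>0}\lambda\left[\dom A-\dom B\right]$ using the closed-subspace hypothesis. The only difference is presentational: you verify the set-algebra inclusions (the bilinear convex-combination identity and the closure passage) by hand, where the paper invokes them as identities such as $\overline{\conv(\dom A)}-\overline{\conv(\dom B)}\subseteq\overline{\conv(\dom A-\dom B)}$.
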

\allowdisplaybreaks
\begin{proof}
Using Fact~\ref{fa:Fitza2}, we see that
\begin{align*}
&\bigcup_{\lambda>0} \lambda\left[\dom A-\dom B\right]\subseteq
\bigcup_{\lambda>0} \lambda\left[E-F\right]
\subseteq\bigcup_{\lambda>0}
\lambda\left[\overline{\conv(\dom A)}-\overline{\conv(\dom B)}\right]\\
&\subseteq\bigcup_{\lambda>0}
\lambda\left[\overline{\conv(\dom A)-\conv(\dom B)}\right]
=
\bigcup_{\lambda>0}\lambda \overline{\left[\conv(\dom A-\dom B)\right]}\\
&\subseteq
\overline{ \bigcup_{\lambda>0}\lambda\left[\conv(\dom A-\dom B)\right]}
=\bigcup_{\lambda>0} \lambda
\left[\dom A-\dom B\right]\quad \text{(using the assumption)}.
\end{align*}
Hence
$\bigcup_{\lambda>0}\lambda\left[\dom A-\dom B\right]=
\bigcup_{\lambda>0}\lambda\left[E-F\right].$

Now assume that $A,B$ are of type (FPV).
Then by Fact~\ref{f:referee0pv}, we have
\begin{align*}
&\bigcup_{\lambda>0} \lambda\left[\dom A-\dom B\right]\subseteq
\bigcup_{\lambda>0} \lambda\left[P_{X}\dom F_A-P_{X}\dom F_B\right]
\subseteq\bigcup_{\lambda>0} \lambda\left[\overline{\dom A}-\overline{\dom B}\right]\\
&\subseteq\bigcup_{\lambda>0} \lambda\left[\overline{\dom A-\dom B}\right]\subseteq
\overline{\bigcup_{\lambda>0} \lambda\left[\dom A-\dom B\right]}\\
&=\bigcup_{\lambda>0} \lambda\left[\dom A-\dom B\right]\quad \text{(using the  assumption)}.
\end{align*}
\end{proof}

 Proposition~\ref{F12} below was first
established by Bauschke, Wang and Yao in
\cite[Proposition~5.9]{BWY3} when $X$ is  a reflexive space.
We now provide a nonreflexive version.

\begin{proposition}\label{F12}
Let $A,B\colon X\To X^*$  be maximally monotone and
suppose that
$\bigcup_{\lambda>0} \lambda\left[\dom A-\dom B\right]$ is a closed subspace of $X$.
Then
$ F_A\Box_2F_B$ is proper,
norm$\times$weak$^*$ lower semicontinuous and convex,
and the partial infimal convolution is exact everywhere.
\end{proposition}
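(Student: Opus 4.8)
The plan is to separate the four assertions, handling convexity and properness by hand and reserving Fact~\ref{F4} (Simons--Z\u{a}linescu) for lower semicontinuity and exactness. Write $H:=F_A\Box_2F_B$ and $\Phi(x,x^*,v^*):=F_A(x,x^*-v^*)+F_B(x,v^*)$. Since each summand of $\Phi$ is a Fitzpatrick function precomposed with a continuous linear map, $\Phi$ is jointly convex on $X\times X^*\times X^*$, and $H$ is its infimal projection onto the $(x,x^*)$ variables, hence convex. For the lower bound, Fact~\ref{f:Fitz} gives $F_A(x,x^*-v^*)\ge\langle x,x^*-v^*\rangle$ and $F_B(x,v^*)\ge\langle x,v^*\rangle$, so $\Phi(x,x^*,v^*)\ge\langle x,x^*\rangle$ for every $v^*$ and thus $H\ge\langle\cdot,\cdot\rangle>-\infty$ everywhere. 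Because the hypothesis makes $\bigcup_{\lambda>0}\lambda[\dom A-\dom B]$ a subspace, it contains $0$, forcing $\dom A\cap\dom B\neq\varnothing$; picking $x$ there with $a^*\in Ax$, $b^*\in Bx$ gives $H(x,a^*+b^*)\le F_A(x,a^*)+F_B(x,b^*)=\langle x,a^*\rangle+\langle x,b^*\rangle<+\infty$. Hence $H$ is proper and convex.

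Next I would apply Fact~\ref{F4} with $Y=X^*$, $F_1=F_A$, $F_2=F_B$. Its hypotheses hold: $F_A,F_B$ are proper, convex and lower semicontinuous (being suprema of continuous affine functions), $H>-\infty$ everywhere by the previous paragraph, and $\bigcup_{\lambda>0}\lambda[P_X\dom F_A-P_X\dom F_B]$ is a closed subspace. For this last point I would run the first chain of inclusions in the proof of Lemma~\ref{Co:1}, now with $P_X\dom F_A$ and $P_X\dom F_B$ in place of $E$ and $F$: indeed $(x,a^*)\in\gra A$ forces $F_A(x,a^*)=\langle x,a^*\rangle<+\infty$, so $\dom A\subseteq P_X\dom F_A$, while Fact~\ref{CoHull} gives $P_X\dom F_A\subseteq\overline{P_X\dom F_A}=\overline{\conv\dom A}$ (and symmetrically for $B$), so both sets lie between the respective domains and their closed convex hulls, which is all the Lemma~\ref{Co:1} argument uses to conclude
\[
\bigcup_{\lambda>0}\lambda[P_X\dom F_A-P_X\dom F_B]=\bigcup_{\lambda>0}\lambda[\dom A-\dom B].
\]
Fact~\ref{F4} then yields, for every $(x^*,x^{**})\in X^*\times X^{**}$, the exact formula $H^*(x^*,x^{**})=\min_{u^*\in X^*}\{F_A^*(x^*-u^*,x^{**})+F_B^*(u^*,x^{**})\}$.

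Finally I would pass from this dual statement to lower semicontinuity and exactness of $H$, which I expect to be the hard part. For lower semicontinuity I would use Fenchel--Moreau relative to the pairing of $X\times X^*$ with $X^*\times X$, whose associated topology is exactly norm$\times$weak$^*$: restricting the displayed conjugate to $x^{**}=y\in X$ computes the partial conjugate of $H$ for this pairing, and since $F_A$ and $F_B$ are themselves norm$\times$weak$^*$ lower semicontinuous (suprema of norm$\times$weak$^*$ continuous affine functions) they equal their own biconjugates, so reconjugating recovers $H$; being a conjugate, $H$ is then norm$\times$weak$^*$ lower semicontinuous. For exactness the decisive observation is that $v^*\mapsto\Phi(x,x^*,v^*)$ is invariant under translation by the annihilator $S^{\bot}$ of $S:=\bigcup_{\lambda>0}\lambda[\dom A-\dom B]$: for $w^*\in S^{\bot}$ the functional $\langle\cdot,w^*\rangle$ takes one and the same value on all of $\dom A$ and on all of $\dom B$, so the two Fitzpatrick terms shift by equal and opposite constants. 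Factoring out $S^{\bot}$, the constraint qualification controls the remaining directions, so the weak$^*$ lower semicontinuous minimand has weak$^*$ compact sublevel sets there and Banach--Alaoglu produces a minimizer. The genuine obstacle is precisely this reduction: the closed-subspace hypothesis is only a \emph{weak} constraint qualification, which by itself guarantees merely the \emph{dual} exactness visible as the $\min$ in Fact~\ref{F4}; recovering primal exactness and lower semicontinuity rests on marrying the weak$^*$ lower semicontinuity of the Fitzpatrick functions to the $S^{\bot}$-invariance in order to restore the coercivity that the hypothesis does not supply directly.
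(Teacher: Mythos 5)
Your first half is sound: the properness argument (the closed-subspace hypothesis forces $\dom A\cap\dom B\neq\varnothing$, and points of $\gra A$, $\gra B$ above a common point give a finite value), the lower bound $F_A\Box_2F_B\geq\scal{\cdot}{\cdot}$, convexity, and your verification that $\bigcup_{\lambda>0}\lambda\left[P_X\dom F_A-P_X\dom F_B\right]=\bigcup_{\lambda>0}\lambda\left[\dom A-\dom B\right]$ (the sandwich $\dom A\subseteq P_X\dom F_A\subseteq\overline{\conv \dom A}$ via Fact~\ref{f:Fitz} and Fact~\ref{CoHull}, fed into the Lemma~\ref{Co:1} chain) are all correct. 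The gap is in the second half. Applying Fact~\ref{F4} to $F_1=F_A$, $F_2=F_B$ only produces a formula for the conjugate $(F_A\Box_2F_B)^*$ on $X^*\times X^{**}$; every conclusion of the proposition lives on the primal side, and your bridge back does not hold. For lower semicontinuity, ``reconjugating recovers $H$'' is circular: the biconjugate of $H:=F_A\Box_2F_B$ in the pairing of $X\times X^*$ with $X^*\times X$ is the norm$\times$weak$^*$ lower semicontinuous \emph{hull} of $H$, and identifying that hull with $H$ is precisely what has to be proven; lower semicontinuity of $F_A$ and $F_B$ separately does not pass to their partial inf-convolution (if it did, the proposition would be immediate and the hypothesis superfluous). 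For exactness, your $S^{\bot}$-invariance observation is correct, but the ensuing claim that the constraint qualification yields weak$^*$ compact sublevel sets modulo $S^{\bot}$ is exactly the coercivity that the Attouch--Brezis/Simons--Z\u{a}linescu mechanism exists to supply; it is not available by inspection, and you concede as much in your closing sentence. So as written your proof establishes properness and convexity but neither lower semicontinuity nor exactness.

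The paper closes this gap by applying Fact~\ref{F4} in the opposite direction: not to $F_A,F_B$ but to $F_1(x,x^*):=F_A^*(x^*,x)$ and $F_2(x,x^*):=F_B^*(x^*,x)$. The constraint qualification for these functions is exactly the first assertion of Lemma~\ref{Co:1}, since $P_X\dom F_1=E$ and $P_X\dom F_2=F$; the condition $(F_1\Box_2F_2)(x,x^*)\geq\scal{x}{x^*}>-\infty$ follows from Fact~\ref{fa:Fitza2} and Fact~\ref{f:Fitz}; and because $F_A,F_B$ are norm$\times$weak$^*$ lower semicontinuous, Fenchel--Moreau in the predual pairing gives $F_1^*(y^*,x)=F_A(x,y^*)$ and $F_2^*(y^*,x)=F_B(x,y^*)$ for $(x,y^*)\in X\times X^*$. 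The conclusion of Fact~\ref{F4}, evaluated at points of $X^*\times X$, then reads
\begin{align*}
(F_1\Box_2F_2)^*(x^*,x)=\min_{y^*\in X^*}\left[F_A(x,x^*-y^*)+F_B(x,y^*)\right]=(F_A\Box_2F_B)(x,x^*),
\end{align*}
which exhibits $F_A\Box_2F_B$ as (the restriction to $X^*\times X$ of) a conjugate function, hence convex and norm$\times$weak$^*$ lower semicontinuous, while the attained minimum is exactly the exactness of the partial inf-convolution. In short: your route computes the conjugate of the object of interest, whereas the paper's route exhibits the object of interest itself as a conjugate; only the latter yields the stated conclusions.
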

\begin{proof}
Define $F_1,F_2\colon X\times X^* \to \RX$ by
\begin{align*}
F_1:(x,x^*)\mapsto F_{A}^{*}(x^*,x),\quad
F_2:(x,x^*)\mapsto F_{B}^{*}(x^*,x).\end{align*}
Since $F_{A}$ and $F_{B}$ are norm-weak$^*$ lower semicontinuous,
\begin{align}F_1^*(x^*,x)=F_{A}(x,x^*),\quad F_2^*(x^*,x)=F_{B}(x,x^*),
\quad \forall(x,x^*)\in X\times X^*.\label{sn:2}\end{align}
Take $(x,x^*)\in X\times X^*$.
By Fact~\ref{fa:Fitza2} and Fact~\ref{f:Fitz},
$$ \big(F_1\Box_2F_2\big)(x,x^*)
\geq\langle x,x^*\rangle>-\infty.$$
In view of Lemma~\ref{Co:1},
\begin{align*}\bigcup_{\lambda>0}
\lambda\left[P_{X}\dom F_1-P_{X}\dom F_2\right]
=\bigcup_{\lambda>0}
\lambda\left[\dom A-\dom B\right]\;\; \text{is a closed subspace}.\end{align*}
By  Fact~\ref{F4} and \eqref{sn:2},
\begin{align*}
&\big(F_1\Box_2F_2\big)^*(x^*,x)
=\min_{y^*\in X^*}\left[F_1^*(x^*-y^*,x)+F_2^*(y^*,x)\right]\\
&=\min_{y^*\in X^*}
\left[F_A(x,x^*-y^*)+F_B(x,y^*)\right]
=\big(F_A\Box_2F_B\big) (x,x^*).
\end{align*}
Hence $ F_A\Box_2F_B$ are proper,
norm$\times$weak$^*$ lower semicontinuous and convex,
and the partial infimal convolution is exact.
\end{proof}

Now we come to our first main result.
\begin{theorem}\label{PropLeP:1}
Let $A, B:X\rightrightarrows X^*$ be
maximally monotone.  Assume that $\bigcup_{\lambda>0}
\lambda\left[\dom A-\dom B\right]$ is a closed subspace.
Suppose that $A$ is ultramaximally monotone, and
 that $B$ is of type (NI).  Then $A+B$ is  ultramaximally monotone.

\end{theorem}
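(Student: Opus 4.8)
The plan is to show that $S:=A+B$ admits \emph{no} proper monotone extension inside $X^{**}\times X^*$; equivalently, that every $(x^{**},x^*)\in X^{**}\times X^*$ which is monotonically related to $\gra S$ already lies in $\gra S$ (forcing $x^{**}\in X$). First I would record that $S$ is maximally monotone of type (NI): this is Fact~\ref{NISum}, since $A$ is ultramaximally monotone (hence of type (NI)), $B$ is of type (NI), and $\bigcup_{\lambda>0}\lambda[\dom A-\dom B]$ is assumed to be a closed subspace. I would also note that $A$ and $B$ are of type (FPV), because every operator of type (NI) is of type (FPV).

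Next I would set up the dual representers. Writing $H:=F_A\Box_2 F_B$, Proposition~\ref{F12} shows that $H$ is proper, norm$\times$weak$^*$ lower semicontinuous, convex, and exact everywhere; combining the Fitzpatrick inequality (Fact~\ref{f:Fitz}) applied to each summand with exactness, one checks directly that $H(x,x^*)\geq\langle x,x^*\rangle$ with equality precisely when $x^*\in(A+B)x$, i.e.\ $H$ is a representer for $\gra S$. Since $S$ is of type (NI), Fact~\ref{MSRDC} then gives that $H^*$ is a representer for $\gra\widehat S$, where $\widehat S\colon X^*\To X^{**}$ is defined by $\gra\widehat S=\{(x^*,x^{**})\mid\langle x^*-s^*,x^{**}-s\rangle\geq0,\ \forall(s,s^*)\in\gra S\}$; applying the same fact to $A$ and to $B$ yields that $F_A^*$ and $F_B^*$ are representers for $\gra\widehat A$ and $\gra\widehat B$. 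The key computation is to evaluate $H^*$ by applying Fact~\ref{F4} to the pair $F_A,F_B$: its hypotheses hold because $(F_A\Box_2 F_B)>-\infty$ everywhere and, by the type (FPV) part of Lemma~\ref{Co:1}, $\bigcup_{\lambda>0}\lambda[P_X\dom F_A-P_X\dom F_B]=\bigcup_{\lambda>0}\lambda[\dom A-\dom B]$ is a closed subspace. This yields, for every $(x^*,x^{**})$,
\begin{equation*}
H^*(x^*,x^{**})=\min_{u^*\in X^*}\big[F_A^*(x^*-u^*,x^{**})+F_B^*(u^*,x^{**})\big].
\end{equation*}

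Now I would run the decomposition. Fix $(x^{**},x^*)$ monotonically related to $\gra S$, i.e.\ $(x^*,x^{**})\in\gra\widehat S$, so $H^*(x^*,x^{**})=\langle x^{**},x^*\rangle$. By the attained minimum above there is $u^*\in X^*$ with $F_A^*(x^*-u^*,x^{**})+F_B^*(u^*,x^{**})=\langle x^{**},x^*-u^*\rangle+\langle x^{**},u^*\rangle$. Here I would invoke the lower bound $F_C^*(z^*,z^{**})\geq\langle z^{**},z^*\rangle$ valid for any operator $C$ of type (NI) — this follows by restricting the supremum defining $F_C^*$ to $\gra C$ and using the defining inequality of type (NI). Applied to $A$ and $B$ it forces both summands to attain their bounds, so $F_A^*(x^*-u^*,x^{**})=\langle x^{**},x^*-u^*\rangle$ and $F_B^*(u^*,x^{**})=\langle x^{**},u^*\rangle$; since $F_A^*,F_B^*$ are representers, $(x^*-u^*,x^{**})\in\gra\widehat A$ and $(u^*,x^{**})\in\gra\widehat B$.

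Finally I would cash in ultramaximality of $A$, which says exactly that $\gra\widehat A=\gra A^{-1}$. Hence $(x^*-u^*,x^{**})\in\gra\widehat A$ already forces $x^{**}\in X$ and $x^*-u^*\in Ax^{**}$. With $x^{**}\in X$, the relation $(u^*,x^{**})\in\gra\widehat B$ merely says that $(x^{**},u^*)\in X\times X^*$ is monotonically related to $\gra B$, so maximal monotonicity of $B$ gives $u^*\in Bx^{**}$. Therefore $x^*=(x^*-u^*)+u^*\in(A+B)x^{**}$, i.e.\ $(x^{**},x^*)\in\gra S$, which is precisely ultramaximal monotonicity of $A+B$. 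I expect the main obstacle to be the bidual bookkeeping: establishing cleanly that $H$ is a representer for $S$ and justifying the conjugate formula for $H^*$ through Fact~\ref{F4}, in particular transferring the closed-subspace constraint qualification from $\dom A-\dom B$ to $P_X\dom F_A-P_X\dom F_B$ via Lemma~\ref{Co:1}. Once that machinery is in place, the forcing step and the pull-back of $x^{**}$ into $X$ are essentially formal.
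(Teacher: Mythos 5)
Your proposal is correct and follows essentially the same route as the paper's proof: establish that $A+B$ is of type (NI) via Fact~\ref{NISum}, use Proposition~\ref{F12} to make $F_A\Box_2 F_B$ a representer for $\gra(A+B)$, decompose its conjugate via Lemma~\ref{Co:1} and Fact~\ref{F4}, force equality in each summand, and pull $x^{**}$ back into $X$ using ultramaximality of $A$ together with Fact~\ref{MSRDC}. The only cosmetic differences are that you prove the lower bound $F_C^*(z^*,z^{**})\geq\scal{z^{**}}{z^*}$ directly from the type (NI) inequality where the paper cites \cite[Theorem~1.2]{MarSva2}, and you finish the step for $B$ via maximal monotonicity of $B$ rather than via Fact~\ref{f:Fitz} and Fact~\ref{fa:Fitza2}.
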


\begin{proof}Clearly, $A+B$ is monotone and $A$ is of type (NI).
Fact~\ref{NISum} shows that $A+B$ is maximally monotone of type (NI).
Define $K:X\times X^*\rightarrow\RX$ by $K:=F_A\Box_2 F_B$.
Proposition~\ref{F12} and Fact~\ref{f:Fitz}
  imply that $K$ is a representative  of $\gra (A+B)$.

Let $(x^*, x^{**})\in X^*\times X^{**}$
be such that $K^*(x^*, x^{**})=\langle x^*, x^{**}\rangle$.
\cite[Theorem~9.10(b)]{Si5} implies that $A$ and $B$ are of type (FPV).  Then
by Lemma~\ref{Co:1}, Fact~\ref{f:Fitz} and Fact~\ref{F4}, there exists $y^*\in X^*$ such that
\begin{align*}
K^*(x^*, x^{**})=F^*_A(y^*, x^{**})+F^*_B(x^*-y^*, x^{**}).
\end{align*}
Then \cite[Theorem~1.2]{MarSva2} shows that \begin{align}
F^*_A(y^*, x^{**})=\langle y^*, x^{**}\rangle\quad
\text{and}\quad F^*_B(x^*-y^*, x^{**})
=\langle x^*-y^*, x^{**}\rangle.\label{lrsee:12}
\end{align}
Since $A$ is ultramaximally monotone,
by  Fact~\ref{MSRDC} and \eqref{lrsee:12},  we have
\begin{align}
x^{**}\in X\quad\text{and}\quad y^*\in Ax^{**}.\label{lrsee:13}
\end{align}
Then combining \eqref{lrsee:12}, Fact~\ref{f:Fitz} and Fact~\ref{fa:Fitza2},
$x^*-y^*\in B x^{**}$ and hence $(x^{**}, x^*)\in\gra (A+B)$.
Hence
\begin{align}
\big\{(x^{**}, x^*)\in X^{**}
\times X^*|K^*(x^*, x^{**})=\langle x^*, x^{**}\rangle\big\}\subseteq\gra (A+B).\label{lrsee:14}
\end{align}
Since $K$ is a representative  of $\gra (A+B)$ and $A+B$ is of type (NI),
Fact~\ref{MSRDC} shows that $A+B$ is
 ultramaximally monotone.
\end{proof}

Let $A:X\rightrightarrows X^*$ be monotone.
For convenience,  we  defined $\Phi_A$ on $X^{**}\times X^*$
\label{symbol:41}
by
\begin{equation*}
\Phi_A\colon
(x^{**},x^*)\mapsto \sup_{(a,a^*)\in\gra A}
\big(\scal{x^{**}}{a^*}+\scal{a}{x^*}-\scal{a}{a^*}\big).
\end{equation*}
Then we have $\Phi_A|_{X\times X^*}=F_A$.

Now we present some characterizations of
the interior and the closure of the range of an ultramaximally monotone
operator, which generalizes Simons'
results in a reflexive space (see \cite[Theorem~31.2 and Lemma~31.1]{Si2}).

\begin{theorem}\label{domFdm:1}
Let $A:X\rightrightarrows X^*$ be
ultramaximally monotone.  Then
\begin{align}\inte\ran A&=\inte\left[\conv\ran A\right]=\inte \left[P_{X^*}
\dom F_A\right]=\inte\left[P_{X^*}
\dom \Phi_A\right]\label{domFdm:1a}\\
\overline{\ran A}&=\overline{\conv \ran A}
=\overline{P_{X^*}\dom F_A}=\overline{P_{X^*}\dom \Phi_A}.
\label{domFdm:1b}\end{align}
\end{theorem}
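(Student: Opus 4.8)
The plan is to reduce every identity to the corresponding \emph{domain} characterizations of a maximally monotone operator (Facts~\ref{interodom}, \ref{CoHull}, and \ref{f:referee0pv}), applied to the inverse $A^{-1}$. The starting observation is that, because $A$ is ultramaximally monotone, $\gra A^{-1}$ is maximal monotone as a subset of $X^*\times X^{**}$; that is, $A^{-1}\colon X^*\To X^{**}$ is maximally monotone once $X^*$ is regarded as the underlying Banach space (whose dual is $X^{**}$). Moreover $\ran A=\dom A^{-1}$, and a direct computation from the definitions shows that the Fitzpatrick function of $A^{-1}$ and the function $\Phi_A$ are transposes of one another: $F_{A^{-1}}(x^*,x^{**})=\Phi_A(x^{**},x^*)$ for every $(x^*,x^{**})\in X^*\times X^{**}$, so that $P_{X^*}\dom F_{A^{-1}}=P_{X^*}\dom\Phi_A$. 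Finally, since equality in Fitzpatrick's inequality holds precisely on $\gra A$ (Fact~\ref{f:Fitz}) and $\Phi_A|_{X\times X^*}=F_A$, one gets the inclusion chain $\ran A\subseteq P_{X^*}\dom F_A\subseteq P_{X^*}\dom\Phi_A$.

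For the interior identities \eqref{domFdm:1a} I would apply Fact~\ref{interodom} to $A^{-1}$, obtaining $\inte\ran A=\inte[\conv\ran A]=\inte[P_{X^*}\dom F_{A^{-1}}]=\inte[P_{X^*}\dom\Phi_A]$. Since taking interiors is monotone under inclusion, the chain above gives $\inte\ran A\subseteq\inte[P_{X^*}\dom F_A]\subseteq\inte[P_{X^*}\dom\Phi_A]$; as the two outer terms have just been identified, the term $\inte[P_{X^*}\dom F_A]$ is squeezed in between equals, which completes \eqref{domFdm:1a}.

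For the closure identities \eqref{domFdm:1b} I would apply Fact~\ref{CoHull} to $A^{-1}$ to get $\overline{\conv\ran A}=\overline{P_{X^*}\dom F_{A^{-1}}}=\overline{P_{X^*}\dom\Phi_A}$. Passing to norm closures in the inclusion chain then yields $\overline{\ran A}\subseteq\overline{P_{X^*}\dom F_A}\subseteq\overline{P_{X^*}\dom\Phi_A}=\overline{\conv\ran A}$, so all four sets in \eqref{domFdm:1b} coincide \emph{provided} $\overline{\ran A}=\overline{\conv\ran A}$, i.e. provided $\overline{\ran A}$ is convex. This single reverse inclusion is the only missing piece.

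The hard part is exactly this last equality, and it is the main obstacle. The natural route is to apply the type-(FPV) domain result Fact~\ref{f:referee0pv} to $A^{-1}$, which gives $\overline{\dom A^{-1}}=\overline{\conv(\dom A^{-1})}$, i.e. $\overline{\ran A}=\overline{\conv\ran A}$ — but only after one establishes that $A^{-1}$ is of type (FPV) in $X^*$. Since every operator of type (NI) is of type (FPV), it suffices to show that $A^{-1}$ is of type (NI) in $X^*$. Here I would use Fact~\ref{MSRDC}: as $A$ is of type (NI) and $F_A$ represents $\gra A$, the conjugate $F_A^*$ represents the maximal monotone extension of $A^{-1}$ to $X^*\times X^{**}$, and ultramaximality of $A$ identifies that extension with $A^{-1}$ itself; the (NI) inequality for $A^{-1}$ should then be read off from $F_A^*$ together with Fact~\ref{fa:Fitza2}. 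The delicate point — and the reason this is the genuine obstacle — is that the (NI) condition for $A^{-1}$ lives at the level of $X^{***}$: a weak$^*$-density (Goldstine) argument readily yields the required inequality on the canonical image of $X^*$ in $X^{***}$ by reducing it to the (NI) inequality for $A$ on $X^{**}\times X^*$, and the whole difficulty is to propagate it to all of $X^{***}$, where the net furnishing the infimum may escape to infinity. If this propagation is carried out, the closure equality follows and the proof is complete; the same conclusion can alternatively be reached by invoking the classical fact that a maximally monotone operator of dense type has convex closure of range.
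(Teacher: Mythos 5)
Your proposal is correct and, in its effective form, coincides with the paper's own proof: the interior identities and the convex-hull/closure identities are obtained exactly as you describe, by viewing $A^{-1}$ as a maximally monotone operator on $X^*$ (this is precisely where ultramaximality is used), noting $\dom A^{-1}=\ran A$ and $F_{A^{-1}}(x^*,x^{**})=\Phi_A(x^{**},x^*)$, applying Facts~\ref{interodom} and~\ref{CoHull} to $A^{-1}$, and squeezing the terms involving $F_A$ via the Fitzpatrick inclusion chain $\ran A\subseteq\conv\left[\ran A\right]\subseteq P_{X^*}\left[\dom F_A\right]\subseteq P_{X^*}\left[\dom \Phi_A\right]$. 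For the one step you single out as the genuine obstacle, $\overline{\ran A}=\overline{\conv\ran A}$, the paper does exactly what you offer as the one-line alternative: since $A$ is ultramaximally monotone it is of type (NI), and the classical convexity of the closure of the range of a type (NI) operator (\cite[Theorem~43.2]{Si2}) finishes the argument, so your incomplete primary route through establishing that $A^{-1}$ is of type (NI)/(FPV) in $X^*$ (with the $X^{***}$ propagation difficulty you correctly flag) is never needed.
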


\begin{proof}We first show \eqref{domFdm:1a}.
Fact~\ref{f:Fitz} implies that
\begin{align}
\ran A\subseteq \conv\left[\ran A\right]
\subseteq P_{X^*}\left[\dom F_A\right]
\subseteq P_{X^*}\left[\dom \Phi_A\right].\label{domFdm:E1}
\end{align}
Define $B:X^{*}\rightrightarrows X^{**}$
by $\gra B:=\gra A^{-1}$.  By the assumption,
$B$ is maximally monotone.
By Fact~\ref{interodom}, we have
\begin{align}
\inte\dom B=\inte\left[\conv\dom B\right]
=\inte\left[P_{X^*}\dom F_B\right].
\label{domFdm:E2}
\end{align}
By the definition of $B$, we have \begin{align}
\dom B=\ran A\quad\text{and}\quad\Phi_A (x^{**},
 x^*)=F_B(x^*, x^{**}),\quad\forall (x^{**}, x^*)\in X^{**}\times X^*.\label{domFdm:E3}
\end{align}
 Then \eqref{domFdm:E2} shows that
\begin{align*}
 \inte\ran A=\inte\left[\conv\ran A\right]
 =\inte P_{X^*}\left[\dom \Phi_A\right].
\end{align*}
Thus combining \eqref{domFdm:E1}, we have
$\inte\ran A=\inte\left[\conv\ran A\right]=\inte \left[P_{X^*}
\dom F_A\right]=\inte\left[P_{X^*}
\dom \Phi_A\right]$.

Now we show \eqref{domFdm:1b}.
Fact~\ref{CoHull} implies that
\begin{align*}
\overline{\conv \dom B}=\overline{P_{X^*}\dom F_B}.
\end{align*}
Thus \eqref{domFdm:E3} implies that
\begin{align*}
\overline{\conv \ran A}=\overline{P_{X^*}\dom \Phi_A}.
\end{align*}
Thus combining \eqref{domFdm:E1}, we have
\begin{align}\overline{\conv \ran A}
=\overline{P_{X^*}\dom F_A}=\overline{P_{X^*}\dom \Phi_A}.
\label{domFdm:E4}
\end{align}
Since $A$ is of type (NI),
\cite[Theorem~43.2]{Si2} and \eqref{domFdm:E4} show
that $\overline{\ran A}=\overline{\conv \ran A}
=\overline{P_{X^*}\dom F_A}=\overline{P_{X^*}\dom \Phi_A}$
 and hence \eqref{domFdm:1b} holds.
\end{proof}

\begin{remark}
We cannot significantly weaken the conditions in Theorem~\ref{domFdm:1}.
For instance, we cannot replace
``ultramaximally monotone" by ``of type (NI)".
In a nonreflexive space
 there always exists a continuous,
  coercive,  and convex function $f$ such that
  $\inte\left[\ran\partial f\right]$ is not convex
 (where $\partial f$ is of type (NI) by Fact~\ref{GozztyD:1}, see also \cite[Theorem~3.1]{BFV1}
 and \cite[page~169]{Si2} for more information),
 but \eqref{domFdm:1a} grantees that
 $\inte\left[\ran\partial f\right]$ is convex.
\end{remark}

The following result is very useful, which allows us
 to show that every ultramaximally
monotone operator is of type (NA) (see Theorem~\ref{TyNA:2} below).
\begin{corollary}\label{GozztyD:2}
Let $A:X\rightrightarrows X^*$ be
ultramaximally monotone.   Then $A+J$ is
 ultramaximally monotone and $\ran (A+J)=X^*$.

\end{corollary}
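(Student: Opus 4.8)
The plan is to prove the two assertions separately: ultramaximality of $A+J$ will come straight from the sum theorem, and surjectivity of the range will come from the range characterization of Theorem~\ref{domFdm:1} fed by a coercivity estimate supplied by $J$.

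First I would record the two ingredients needed to apply Theorem~\ref{PropLeP:1} with $B:=J$. Since $J=\partial(\tfrac12\|\cdot\|^2)$, Fact~\ref{GozztyD:1} gives that $J$ is maximally monotone of type (NI), and $\dom J=X$ because $\tfrac12\|\cdot\|^2$ is finite and continuous. Hence $\bigcup_{\lambda>0}\lambda[\dom A-\dom J]=X$ is a closed subspace. As $A$ is ultramaximally monotone and $J$ is of type (NI), Theorem~\ref{PropLeP:1} yields at once that $A+J$ is ultramaximally monotone.

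For the range I set $C:=A+J$ and fix a reference pair $(x_0,a_0^*)\in\gra A$, which exists since $A$ is maximally monotone. The heart of the argument is a coercivity bound: for $(a,a^*)\in\gra C$ write $a^*=p^*+j^*$ with $p^*\in Aa$ and $j^*\in Ja$. Monotonicity of $A$ gives $\scal{a-x_0}{p^*}\ge\scal{a-x_0}{a_0^*}$, while $j^*\in Ja$ gives $\scal{a}{j^*}=\|a\|^2=\|j^*\|^2$, so $\scal{a-x_0}{j^*}\ge\|a\|^2-\|x_0\|\,\|a\|$; adding, $\scal{a-x_0}{a^*}\ge\|a\|^2-\|x_0\|\,\|a\|+\scal{a-x_0}{a_0^*}$. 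I would then evaluate the Fitzpatrick function $\Phi_C$ on $X^{**}\times X^*$ at the point $(x_0,x^*)$ for an arbitrary $x^*\in X^*$, using $x_0\in X\subseteq X^{**}$. Since $\Phi_C(x_0,x^*)=\sup_{(a,a^*)\in\gra C}\big(\scal{a}{x^*}-\scal{a-x_0}{a^*}\big)$, the coercivity bound absorbs all dependence on $a^*$ and collapses the supremand to a downward quadratic in $\|a\|$, bounded above by $-\|a\|^2+\big(\|x^*\|+\|x_0\|+\|a_0^*\|\big)\|a\|+\|x_0\|\,\|a_0^*\|$. Therefore $\Phi_C(x_0,x^*)<+\infty$ for every $x^*$, that is, $P_{X^*}\dom\Phi_C=X^*$.

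Finally I would invoke Theorem~\ref{domFdm:1} applied to the ultramaximally monotone operator $C=A+J$: it gives $\inte\ran C=\inte\big[P_{X^*}\dom\Phi_C\big]=\inte X^*=X^*$, and since $\inte\ran C\subseteq\ran C$ this forces $\ran(A+J)=X^*$. I expect the surjectivity claim to be the main obstacle, because in a nonreflexive space one cannot run the usual reflexive-space argument (coercivity plus weak compactness and a Minty/Rockafellar surjectivity theorem): coercive lower semicontinuous functions need not attain their infimum over $X$. The plan circumvents this entirely by turning coercivity into finiteness of $\Phi_C(x_0,\cdot)$ and letting Theorem~\ref{domFdm:1} carry the topological burden, so that the only genuine computation is the quadratic estimate above, together with the routine verification that the reference point $x_0$ may be used as the $X^{**}$-argument of $\Phi_C$.
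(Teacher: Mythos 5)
Your proof is correct, and its skeleton coincides with the paper's: both obtain ultramaximality of $A+J$ from Theorem~\ref{PropLeP:1} with $B=J$ (which is of type (NI) by Fact~\ref{GozztyD:1}, with $\dom J=X$ making the closed-subspace hypothesis automatic), and both derive surjectivity from Theorem~\ref{domFdm:1} once it is known that $P_{X^*}\dom F_{A+J}=X^*$. Where you genuinely diverge is in how that middle fact is established. The paper cites two results from Simons' book: $\dom F_J=X\times X^*$ \cite[Eq.(23.9), page~101]{Si2}, and \cite[Lemma~23.9]{Si2}, which transfers $X^*=P_{X^*}\left[\dom(F_A\Box_2 F_J)\right]$ to $X^*=P_{X^*}\left[\dom F_{A+J}\right]$. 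You instead prove finiteness of $F_{A+J}(x_0,\cdot)$ directly: decomposing $a^*=p^*+j^*$ with $p^*\in Aa$, $j^*\in Ja$, using monotonicity of $A$ against a fixed $(x_0,a_0^*)\in\gra A$ and the identities $\scal{a}{j^*}=\|a\|^2=\|j^*\|^2$, you dominate the supremand of the Fitzpatrick function by the concave quadratic $-\|a\|^2+\big(\|x^*\|+\|x_0\|+\|a_0^*\|\big)\|a\|+\|x_0\|\,\|a_0^*\|$; the constants check out. In effect your estimate reproves, by hand, exactly the instance of Simons' lemma that the paper needs. The trade-off: your argument is self-contained and elementary, requiring nothing about $F_J$ beyond the definition of the duality map, whereas the paper's is shorter and more modular, since the same two citations work for any summand $B$ whose Fitzpatrick function has full domain. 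One cosmetic remark: since $x_0\in X$, the quantity you estimate is simply $F_{A+J}(x_0,x^*)$, so passing through $\Phi_{A+J}$ on $X^{**}\times X^*$ is harmless (Theorem~\ref{domFdm:1} covers both) but not needed.
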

\begin{proof}
By Fact~\ref{GozztyD:1}, $J$ is of type (NI).
Then  applying Theorem~\ref{PropLeP:1}, we have $A+J$
is ultramaximally monotone.
Now we show that $\ran (A+J)=X^*$.

By \cite[Eq.(23.9), page~101]{Si2},
$\dom F_J=X\times X^*$.  Then $X^*=P_{X^*}
\left[\dom(F_A\Box_2 F_J)\right]$.
Thus, \cite[Lemma~23.9]{Si2} implies that
$X^*=P_{X^*}\left[\dom F_{A+J}\right]$.
Since $A+J$ is ultramaximally monotone, Theorem~\ref{domFdm:1}
implies that \begin{align*}\inte\ran (A+J)
=\inte\left[P_{X^*}\dom F_{A+J}\right]=\inte X^*=X^*.
\end{align*}
Hence $\ran (A+J)=X^*$.
\end{proof}

\begin{corollary}\label{Coeriv:1}
Let $A:X\rightrightarrows X^*$ be
 ultramaximally monotone with $0\in\dom A$. Assume that
$\lim_{\|x\|\rightarrow\infty}\inf
\frac{\langle x, Ax\rangle}{\|x\|}=+\infty$.
Then $\ran A=X^*$.
\end{corollary}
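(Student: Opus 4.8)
The plan is to realize each $z^*\in X^*$ as a norm-limit of points in $\ran(A+\varepsilon J)$ as $\varepsilon\to0$, to bound the preimages via coercivity, and then to use ultramaximality to pull the resulting bidual cluster point back into $\gra A$. First I would record that $\ran(A+\varepsilon J)=X^*$ for every $\varepsilon>0$. Indeed $\varepsilon J=\partial(\tfrac{\varepsilon}{2}\|\cdot\|^2)$ is of type (NI) by Fact~\ref{GozztyD:1} and has full domain, so Theorem~\ref{PropLeP:1} shows $A+\varepsilon J$ is ultramaximally monotone; running the argument of Corollary~\ref{GozztyD:2} with $\varepsilon J$ in place of $J$ (using $\dom F_{\varepsilon J}=X\times X^*$ together with Theorem~\ref{domFdm:1}) then gives surjectivity. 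The hypothesis $0\in\dom A$ guarantees that $\gra A\neq\varnothing$, so that the monotone machinery and these two results genuinely apply.

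Now fix $z^*\in X^*$. For each $\varepsilon>0$ choose $x_\varepsilon\in X$, $x_\varepsilon^*\in Ax_\varepsilon$ and $j_\varepsilon\in Jx_\varepsilon$ with $z^*=x_\varepsilon^*+\varepsilon j_\varepsilon$. Since $\langle x_\varepsilon,j_\varepsilon\rangle=\|x_\varepsilon\|^2\ge0$, one has $\langle x_\varepsilon,x_\varepsilon^*\rangle=\langle x_\varepsilon,z^*\rangle-\varepsilon\|x_\varepsilon\|^2\le\|z^*\|\,\|x_\varepsilon\|$, so $\langle x_\varepsilon,x_\varepsilon^*\rangle/\|x_\varepsilon\|\le\|z^*\|$ whenever $x_\varepsilon\neq0$. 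The coercivity hypothesis forbids $\|x_\varepsilon\|\to\infty$, hence $(x_\varepsilon)$ stays bounded, say $\|x_\varepsilon\|\le R$; consequently $\|\varepsilon j_\varepsilon\|=\varepsilon\|x_\varepsilon\|\le\varepsilon R\to0$ and therefore $x_\varepsilon^*\to z^*$ in norm as $\varepsilon\to0$.

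Next I would pass to the limit. By Banach--Alaoglu the bounded family $(x_\varepsilon)\subseteq X\subseteq X^{**}$ admits a subnet with $x_\alpha\weakstarly x^{**}$ for some $x^{**}\in X^{**}$, while still $x_\alpha^*\to z^*$ in norm. For any $(a,a^*)\in\gra A$, split $\langle x_\alpha-a,x_\alpha^*-a^*\rangle=\langle x_\alpha-a,x_\alpha^*-z^*\rangle+\langle x_\alpha-a,z^*-a^*\rangle$. The first summand tends to $0$ (a norm-bounded factor against a norm-null one) and the second tends to $\langle x^{**}-a,z^*-a^*\rangle$ by weak$^*$ convergence; since the whole expression is $\ge0$ by monotonicity, I conclude $\langle x^{**}-a,z^*-a^*\rangle\ge0$. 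Thus $(x^{**},z^*)$ is monotonically related to $\gra A$ in $X^{**}\times X^*$.

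Finally, ultramaximality — maximality with respect to $X^{**}\times X^*$ — forces $(x^{**},z^*)\in\gra A$; as $\gra A\subseteq X\times X^*$ this yields $x^{**}\in X$ and $z^*\in Ax^{**}$, so $z^*\in\ran A$. Since $z^*$ was arbitrary, $\ran A=X^*$. I expect the last upgrade to be the main obstacle: the approximation by $A+\varepsilon J$ only produces a weak$^*$ cluster point $x^{**}$ in the bidual, giving a priori just $z^*\in\overline{\ran A}$, and it is precisely the ultramaximal hypothesis (not merely type (NI)) that lets this bidual point be recognized as a genuine preimage in $X$.
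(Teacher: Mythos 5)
Your proof is correct, but it takes a genuinely different route from the paper's. The paper never regularizes: it shows directly that $\{0\}\times X^*\subseteq\dom F_A$ by estimating $F_A(0,x^*)$ in two cases --- for $\|a\|\geq\rho$ the coercivity hypothesis gives $\langle a,x^*\rangle-\langle a,a^*\rangle\leq 0$, while for $\|a\|<\rho$ monotonicity against the point $(0,x_0^*)\in\gra A$ gives the bound $\rho\|x^*-x_0^*\|$ --- so that $P_{X^*}\dom F_A=X^*$, and then a single application of Theorem~\ref{domFdm:1} yields $\inte\ran A=\inte\left[P_{X^*}\dom F_A\right]=X^*$, hence $\ran A=X^*$. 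You instead run the classical Browder elliptic-regularization scheme: solve $z^*\in(A+\varepsilon J)x_\varepsilon$ via Theorem~\ref{PropLeP:1} and the argument of Corollary~\ref{GozztyD:2} (this step is fine, and could be shortened by writing $A+\varepsilon J=\varepsilon\left(\varepsilon^{-1}A+J\right)$ and applying Corollary~\ref{GozztyD:2} to the ultramaximally monotone operator $\varepsilon^{-1}A$), use coercivity only to get a uniform bound on $(x_\varepsilon)$, extract a weak$^*$ cluster point in $X^{**}$ by Banach--Alaoglu, and invoke ultramaximality to recognize the limit pair as an element of $\gra A$. This is essentially Browder's reflexive-space proof (cited by the paper right after the corollary), adapted to the nonreflexive setting by replacing weak compactness with weak$^*$ compactness in the bidual plus ultramaximality. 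Each approach buys something: the paper's argument is shorter and limit-free, staying entirely at the level of Fitzpatrick functions; yours uses heavier topological machinery (nets, subnets) but needs only $\dom A\neq\varnothing$ rather than $0\in\dom A$, and it makes explicit exactly where ultramaximality, as opposed to mere type (NI), is indispensable --- in upgrading the bidual cluster point to a genuine preimage in $X$. One phrasing caution: saying the coercivity hypothesis ``forbids $\|x_\varepsilon\|\to\infty$'' is strictly weaker than boundedness; what your estimate actually delivers is that every $\|x_\varepsilon\|$ lies below the single threshold $R$ associated with the level $\|z^*\|+1$, which is the uniform bound you need, so state it that way.
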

\begin{proof}
We first show that
\begin{align}\label{URan:C1}
\{0\}\times X^*\subseteq\dom F_A.
\end{align}
Since $0\in\dom A$, there exists
$x^*_0\in X^*$ such that $(0,x^*_0)\in\gra A$.
Let $x^*\in X^*$.  By the assumption, there exists
$\rho>0$ such that
\begin{align}
\langle a, a^*\rangle\geq \|x^*\|\cdot\|a\|,
\quad\forall \|a\|\geq\rho,\,
(a,a^*)\in\gra A.\label{URan:C2}
\end{align}
Let $(a,a^*)\in\gra A$.

\emph{Case 1:} $\|a\|<\rho$.

By $(0,x^*_0)\in\gra A$, we have $\langle a,a^*-x^*_0
\rangle=\langle a-0,a^*-x^*_0\rangle\geq0$.
We have \begin{align*}\langle a, x^*\rangle-
\langle a,a^*\rangle
\leq\langle a, x^*\rangle-\langle a,x_0^*
\rangle\leq \rho\|x^*-x^*_0\|.
\end{align*}

\emph{Case 2:} $\|a\|\geq \rho$.

Using \eqref{URan:C2}, we have
\begin{align*}\langle a, x^*\rangle-\langle a,a^*\rangle
= \langle a, x^*\rangle-\|x^*\|\cdot\|a\|\leq 0.
\end{align*}
Thus combining the above two cases,
\begin{align*}
F_A(0, x^*)=\sup_{(a,a^*)\in\gra A}\{\langle a, x^*\rangle
-\langle a,a ^*\rangle\}\leq \rho\|x^*-x^*_0\| .
\end{align*}
Thus $(0,x^*)\in\dom F_A$.
Hence \eqref{URan:C1} holds and then
$P_{X^*}
\dom F_A= X^*$.
Thus Theorem~\ref{domFdm:1} implies that
\begin{align*}
\inte \ran A=\inte \left[P_{X^*}
\dom F_A\right]=\inte X^*=X^*.
\end{align*}
Thus $\ran A=X^*$.
\end{proof}

Corollary~\ref{Coeriv:1} was first proved
by Browder in a reflexive space (see \cite[Theorem~3]{Browder1}).

\begin{corollary}
Let $A:X\rightrightarrows X^*$ be  monotone. Let $\delta>0$ and $\alpha>1$.
Assume that \begin{align*}\langle x-y, x^*-y^*\rangle\geq\delta\|x-y\|^{\alpha},\quad
\forall (x,x^*), (y,y^*)\in\gra A.
\end{align*}
Then $A$ is ultramaximally monotone if and only if
$\ran A=X^*$.
\end{corollary}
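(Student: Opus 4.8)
The plan is to prove the two implications separately, in both cases passing to the inverse operator. For ``$A$ ultramaximally monotone $\Rightarrow\ran A=X^*$'', I would first normalize so that $(0,0)\in\gra A$. An ultramaximally monotone operator is maximally monotone, so its graph is nonempty and I may pick $(z,z^*)\in\gra A$; translating the graph by $(z,z^*)$ is an affine, monotonicity-preserving isomorphism of $X^{**}\times X^*$, so it preserves ultramaximal monotonicity and the uniform estimate, while replacing $\ran A$ by $\ran A-z^*$. Since $\ran A=X^*$ is equivalent to $\ran A-z^*=X^*$, I may assume $(0,0)\in\gra A$. Then the hypothesis gives $\langle x,x^*\rangle\geq\delta\|x\|^{\alpha}$ for every $(x,x^*)\in\gra A$, so $\langle x,x^*\rangle/\|x\|\geq\delta\|x\|^{\alpha-1}\to+\infty$ as $\|x\|\to\infty$ because $\alpha>1$. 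This is exactly the coercivity hypothesis of Corollary~\ref{Coeriv:1}, which then yields $\ran A=X^*$.

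For the converse, assume $\ran A=X^*$. The estimate with $x^*=y^*$ forces $\|x-y\|=0$, so $A$ is injective and $A^{-1}$ is single-valued; together with $\ran A=X^*$ this exhibits $A^{-1}$ as a single-valued monotone map defined on all of $X^*$ with values in $X$. Feeding the estimate into the duality bound $\langle x-y,x^*-y^*\rangle\leq\|x-y\|\,\|x^*-y^*\|$ gives $\delta\|A^{-1}x^*-A^{-1}y^*\|^{\alpha-1}\leq\|x^*-y^*\|$, so $A^{-1}\colon X^*\to X$ is norm-to-norm continuous (indeed H\"{o}lder continuous, using $\alpha>1$). The crucial reformulation is that $A$ is ultramaximally monotone if and only if $B\colon X^*\To X^{**}$ defined by $\gra B:=\gra A^{-1}$ is maximally monotone: interchanging the two coordinates is a monotonicity-preserving bijection between subsets of $X^{**}\times X^*$ and of $X^*\times X^{**}$, so maximality of $\gra A$ in $X^{**}\times X^*$ coincides with maximality of $\gra A^{-1}$ in $X^*\times X^{**}$ --- the same passage to the inverse already used in the proof of Theorem~\ref{domFdm:1}.

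It then remains to prove that $B=\iota\circ A^{-1}\colon X^*\to X^{**}$, with $\iota\colon X\hookrightarrow X^{**}$ the canonical embedding, is maximally monotone; note that $B$ is single-valued, everywhere defined, monotone and norm continuous. I would run Minty's argument: if $(z_0,\zeta_0)\in X^*\times X^{**}$ is monotonically related to $\gra B$, then testing the relation with $w=z_0-tv$ for $v\in X^*$ and $t>0$ and dividing by $t$ gives $(\zeta_0-B(z_0-tv))(v)\geq 0$; letting $t\downarrow 0$ and using that $B(z_0-tv)\to Bz_0$ in the norm of $X^{**}$ yields $(\zeta_0-Bz_0)(v)\geq 0$ for every $v\in X^*$, and replacing $v$ by $-v$ forces $\zeta_0=Bz_0$. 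Hence $(z_0,\zeta_0)\in\gra B$, so $B$ is maximally monotone and $A$ is ultramaximally monotone. The step needing the most care is this last one: the ``monotone $+$ hemicontinuous $+$ everywhere defined $\Rightarrow$ maximally monotone'' principle is being applied to an operator on the possibly non-reflexive space $X^*$ with dual $X^{**}$. In such a setting hemicontinuity is normally understood with respect to the weak$^*$ topology, which would be the main obstacle; here it dissolves because the uniform monotonicity upgrades $A^{-1}$ to a genuinely norm-continuous map, so the limit $B(z_0-tv)\to Bz_0$ may be taken in norm and the pairing with $v$ passes to the limit with no reflexivity assumption.
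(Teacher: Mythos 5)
Your proof is correct and follows essentially the same route as the paper: for necessity, translate the graph so that $(0,0)\in\gra A$, observe the resulting coercivity, and apply Corollary~\ref{Coeriv:1}; for sufficiency, show $A^{-1}$ is single-valued and H\"older continuous on $X^*$ and conclude that it is maximally monotone from $X^*$ to $X^{**}$. The only difference is that the paper states this final maximality step without justification, whereas you supply the Minty-type argument explicitly and correctly point out that the norm continuity of $A^{-1}$ is what makes it work without any reflexivity assumption.
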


\begin{proof}
``$\Rightarrow$":
Let $(x_0,x^*_0)\in\gra A$. Define $B:X\rightrightarrows
X^*$ by $\gra B:=\gra A-\{(x_0, x^*_0)\}$.
Then by the assumption, we have $B$ is
 ultramaximally monotone with $(0,0)\in\gra B$ and
\begin{align}\langle b, b^*\rangle\geq\delta\|b\|^{\alpha},\quad
\forall (b,b^*)\in\gra B.\label{URan:d1}
\end{align}
Thus,
$\lim_{\|x\|\rightarrow\infty}\inf
\frac{\langle x, Bx\rangle}{\|x\|}=+\infty$.
Then by Corollary~\ref{Coeriv:1},
$\ran B=X^*$ and hence $\ran A= X^*$ by the definition of $B$.

``$\Leftarrow$":
We first show that $A^{-1}$ is single-valued on $X^*$.
Let $x,y\in A^{-1}x^*$.  Then
$(x,x^*), (y, x^*)\in\gra A$.  Then by the assumption,
$0=\langle x-y, x^*-x^*\rangle\geq\delta\|x-y\|^{\alpha}$.
   Thus, $x=y$ and hence $A^{-1}$ is single-valued on
   $X^*$ since $\ran A=X^*$.  By the assumption again,
we have
\begin{align*}
 \|A^{-1}x^*-A^{-1}y^*\|\cdot\|x^*-y^*\|
 \geq\langle A^{-1}x^*-A^{-1}y^*, x^*-y^*
 \rangle\geq\delta\| A^{-1}x^*-A^{-1}y^*\|^{\alpha},\quad
\forall x^*, y^*\in X^*.
\end{align*}
Thus
\begin{align*}
\|x^*-y^*\|\geq\delta\| A^{-1}x^*-A^{-1}y^*\|^{\alpha-1},\quad
\forall x^*, y^*\in X^*.
\end{align*}
Since $\alpha>1$, $A^{-1}$ is continuous and
then $A^{-1}$ is maximally monotone from $X^*$ to $X^{**}$.
Hence $A$ is ultramaximally monotone.
\end{proof}

Let $A:X\rightrightarrows X^*$ be such that
$\dom A\neq\varnothing$.  We say that $A$ is \emph{rectangular}
if $\dom A\times\ran A\subseteq\dom F_A$
(see \cite{BreHara,  Si2, Zeidler2B, BC2011,BWY4}
 for more information on rectangular operators).

The proof of Theorem~\ref{RectaE:1} closely follows the lines of that of
\cite[Corollary~31.6]{Si2}.

\begin{theorem}[The Brezis-Haruax condition
in general Banach space]\label{RectaE:1}
Let $A, B:X\rightrightarrows X^*$ be monotone
with $\dom A\cap\dom B\neq\varnothing$.  Assume that $A+B$ is
ultramaximally monotone.  Suppose that
one of the following conditions holds:
\begin{enumerate}
\item  \label{RectaE:1a}$A$ and $B$ are rectangular.
\item \label{RectaE:1b} $\dom A\subseteq\dom B$ and $B$ is rectangular.
\end{enumerate}
Then \begin{align*}\inte \ran (A+B)=
\inte\left[\ran A+\ran B\right]\quad\text{and}\quad
\overline{\ran A+\ran B}=\overline{\ran (A+B)}.
\end{align*}
\end{theorem}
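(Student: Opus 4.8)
The plan is to reduce both identities to the single set inclusion $\ran A+\ran B\subseteq P_{X^*}\dom F_{A+B}$ and then feed it into the range characterization of Theorem~\ref{domFdm:1}. First I would record the trivial inclusion $\ran(A+B)\subseteq\ran A+\ran B$: any $z^*\in(A+B)c$ splits as $z^*=a^*+b^*$ with $a^*\in Ac\subseteq\ran A$ and $b^*\in Bc\subseteq\ran B$. Since interior and closure are monotone set operations, this already gives $\inte\ran(A+B)\subseteq\inte[\ran A+\ran B]$ and $\overline{\ran(A+B)}\subseteq\overline{\ran A+\ran B}$ for free. Because $A+B$ is ultramaximally monotone, Theorem~\ref{domFdm:1} supplies $\inte\ran(A+B)=\inte[P_{X^*}\dom F_{A+B}]$ and $\overline{\ran(A+B)}=\overline{P_{X^*}\dom F_{A+B}}$ (note that $\dom A\cap\dom B\neq\varnothing$ guarantees $\gra(A+B)\neq\varnothing$). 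Thus, once I prove $\ran A+\ran B\subseteq P_{X^*}\dom F_{A+B}$, taking interiors and closures and combining with the trivial inclusions will close both equalities.

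The engine of the argument is the subadditivity estimate
\[
F_{A+B}(x,p^*+q^*)\le F_A(x,p^*)+F_B(x,q^*),\qquad\forall\,x\in X,\ p^*,q^*\in X^*.
\]
To obtain it I would expand $F_{A+B}(x,p^*+q^*)$ as a supremum over $c\in\dom A\cap\dom B$, $a^*\in Ac$, and $b^*\in Bc$, and observe that, for a fixed base point $c$, the integrand separates into a part depending only on $a^*$, namely $\scal{c}{p^*}+\scal{x-c}{a^*}$, and a part depending only on $b^*$, namely $\scal{c}{q^*}+\scal{x-c}{b^*}$. Splitting the inner suprema and then applying $\sup_c(\varphi_A(c)+\varphi_B(c))\le\sup_c\varphi_A(c)+\sup_c\varphi_B(c)$, together with $\sup_c\varphi_A(c)=F_A(x,p^*)$ and $\sup_c\varphi_B(c)=F_B(x,q^*)$, yields the claim.

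With this in hand the two rectangularity hypotheses each force a finite right-hand side. Fix $p^*\in\ran A$ and $q^*\in\ran B$. Under \ref{RectaE:1a} I would choose any $x\in\dom A\cap\dom B$; then $(x,p^*)\in\dom A\times\ran A\subseteq\dom F_A$ and $(x,q^*)\in\dom B\times\ran B\subseteq\dom F_B$, so both terms are finite and $p^*+q^*\in P_{X^*}\dom F_{A+B}$. Under \ref{RectaE:1b} I would instead pick $u\in\dom A$ with $p^*\in Au$ and take $x=u$; since $(u,p^*)\in\gra A$, the elementary identity $F_A(u,p^*)=\scal{u}{p^*}<\infty$ valid at any point of the graph of a monotone operator (the equality half of Fact~\ref{f:Fitz}) handles the first term, while $u\in\dom A\subseteq\dom B$ combined with rectangularity of $B$ gives $(u,q^*)\in\dom B\times\ran B\subseteq\dom F_B$, so $F_B(u,q^*)<\infty$. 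Either way $p^*+q^*\in P_{X^*}\dom F_{A+B}$, hence $\ran A+\ran B\subseteq P_{X^*}\dom F_{A+B}$, which completes the argument.

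The point needing the most care is the separation step in the subadditivity estimate: the supremum defining $F_{A+B}$ runs jointly over $(c,a^*,b^*)$, and it is only because $a^*$ and $b^*$ enter additively and independently once $c$ is frozen that the inner suprema decouple into $F_A$ and $F_B$ evaluated at the \emph{same} base point $x$. I would also watch the two places where finiteness is produced differently, pure rectangularity in case \ref{RectaE:1a} versus the graph/Fitzpatrick equality together with the domain inclusion $\dom A\subseteq\dom B$ in case \ref{RectaE:1b}, since this is precisely where the two hypotheses of the Brezis--Haraux condition do their distinct work.
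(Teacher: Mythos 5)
Your proposal is correct and follows essentially the same route as the paper: both reduce everything to the inclusion $\ran A+\ran B\subseteq P_{X^*}\dom F_{A+B}$ (handling case \ref{RectaE:1a} by pure rectangularity at a common domain point and case \ref{RectaE:1b} by the graph-point equality $F_A(u,p^*)=\scal{u}{p^*}$ plus $\dom A\subseteq\dom B$), and then invoke Theorem~\ref{domFdm:1} together with the trivial inclusion $\ran(A+B)\subseteq\ran A+\ran B$. The only difference is cosmetic: where the paper bounds $F_{A+B}$ by the partial inf-convolution $F_A\Box_2 F_B$ and cites Simons' Lemma~23.9, you prove the needed subadditivity estimate $F_{A+B}(x,p^*+q^*)\le F_A(x,p^*)+F_B(x,q^*)$ directly by splitting the supremum, which amounts to inlining that lemma's proof (note only that your intermediate suprema over $c\in\dom A\cap\dom B$ give ``$\le F_A(x,p^*)$'' rather than equality, which is all you need).
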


\begin{proof}
We first show that
\begin{align}
\ran A+\ran B\subseteq P_{X^*}\dom F_{A+B}.\label{RectaE:1e1}
\end{align}
Let $x^*\in \ran A+\ran B$, and $x_0\in\dom A\cap\dom B$.
Thus there exists $x_1^*\in \ran A, x_2^*\in\ran B$ such that $x^*=x^*_1
+x^*_2$.  Then we have $(x_0, x^*_1)\in\dom A\times\ran A$ and $(x_0, x^*_2)
\in\dom B\times\ran B$.

Now we consider two cases.

\emph{Case 1}: \ref{RectaE:1a} holds.

We have $(x_0, x^*_1)\in\dom F_A$ and $(x_0, x^*_2)
\in \dom F_B$.  Since $F_A\Box_2 F_B(x_0, x^*)\leq F_A(x_0, x^*_1)
+F_B(x_0, x^*_2)<+\infty$, $(x_0, x^*)\in\dom F_A\Box_2 F_B$.
Then \cite[Lemma~23.9]{Si2} implies that $(x_0, x^*)\in\dom F_{A+B}$
and thus $x^*\in P_{X^*}\dom F_{A+B}$.  Hence \eqref{RectaE:1e1} holds.

\emph{Case 2}: \ref{RectaE:1b} holds.

Since $x^*_1\in\ran A$, there exists $x_1\in X$ such that $
(x_1, x^*_1)\in\gra A$.
By the assumption, $(x_1,x^*_2)\in\dom B\times\ran B\subseteq\dom F_B$.
Similar to the corresponding lines in Case 1,
we have  $x^*\in P_{X^*}\dom F_{A+B}$ and thus \eqref{RectaE:1e1} holds.

Combining all the above cases, \eqref{RectaE:1e1} holds.

By Theorem~\ref{domFdm:1} and \eqref{RectaE:1e1}
\begin{align*}
\inte\left[\ran A+\ran B\right]\subseteq
\inte\left[P_{X^*}\dom F_{A+B}\right]=\inte\ran (A+B)
\subseteq \inte\left[\ran A+\ran B\right].
\end{align*}
Hence $\inte\left[\ran A+\ran B\right]=\inte\ran (A+B)$.

By Theorem~\ref{domFdm:1} and \eqref{RectaE:1e1} again,
\begin{align*}
\ran A+\ran B\subseteq P_{X^*}\dom F_{A+B}
\subseteq\overline{\ran (A+B)}\subseteq\overline{
\ran A+\ran B}.
\end{align*}
Hence $\overline{\ran A+\ran B}=\overline{\ran (A+B)}$.
\end{proof}

\begin{remark}
 Brezis and Haraux proved Theorem~\ref{RectaE:1}
  in the setting of a Hilbert space (see
\cite[Theorems~3\&4, pp.~173--174]{BreHara}).
Reich extent the above result to a reflexive space (see \cite[Theorem~2.2]{Reich}).

\end{remark}
Let $A:X\rightrightarrows X^*$ be monotone.
We say $A$ is \emph{of type (NA)} (where (NA) stands for
``negative alignment") \cite{BS1} if
for every $(x,x^*)\in X\times X^*\backslash\gra A$,
there exists $(a,a^*)\in\gra A$ such that $a\neq x, a^*\neq x^*$ and
\begin{align*}
\langle x-a, x^*-a^*\rangle=-\|x-a\|\cdot\|x^*-a^*\|.
\end{align*}

\begin{proposition}\label{TyNA:1}
Let $A:X\rightrightarrows X^*$ be ultramaximally
monotone and $(x,x^*)\in X\times X^*$.
Then there exists $(a,a^*)\in\gra A$ such that
$\|x^*-a^*\|=\|x-a\|$ and
\begin{align*}
\|x-a\|^2+\|x^*-a^*\|^2+2\langle x-a, x^*-a^*\rangle=0.
\end{align*}

\end{proposition}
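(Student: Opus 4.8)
The plan is to reduce everything to the surjectivity statement $\ran(A+J)=X^*$ of Corollary~\ref{GozztyD:2}, after translating so that the prescribed point $(x,x^*)$ becomes the origin. First I would define $\hat A\colon X\To X^*$ by $\gra\hat A := \big\{(y-x,y^*-x^*)\mid (y,y^*)\in\gra A\big\}$. Since the translating vector $(x,x^*)$ lies in $X\times X^*\subseteq X^{**}\times X^*$, translation by it is a bijection of $X^{**}\times X^*$ that preserves the pairing of differences, and hence preserves both monotonicity and maximality with respect to $X^{**}\times X^*$; thus $\hat A$ is again ultramaximally monotone. Corollary~\ref{GozztyD:2} then applies to $\hat A$ and gives $\ran(\hat A+J)=X^*$, so in particular $0\in\ran(\hat A+J)$.

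The key observation is that the two desired conclusions are exactly the statement that $0\in(\hat A+J)(b)$ for a suitable $b$. Indeed, $0\in\ran(\hat A+J)$ yields $b\in X$ together with $b^*\in\hat A(b)$ and a matching element $-b^*\in J(b)$ (the element of $J(b)$ that sums with $b^*$ to $0$). Setting $a:=b+x$ and $a^*:=b^*+x^*$, I get $(a,a^*)\in\gra A$ by definition of $\hat A$, while $b=a-x$ and $-b^*=x^*-a^*$, so the inclusion $-b^*\in J(b)$ reads $x^*-a^*\in J(a-x)$.

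To finish I would invoke the defining property of the duality map $J=\partial\big(\tfrac12\|\cdot\|^2\big)$: by Young's inequality (Fenchel equality for $\tfrac12\|\cdot\|^2$ and its conjugate $\tfrac12\|\cdot\|^2$ on $X^*$), one has $v^*\in J(v)$ if and only if $\scal{v}{v^*}=\|v\|^2=\|v^*\|^2$. Applying this with $v=a-x$ and $v^*=x^*-a^*$ gives $\scal{a-x}{x^*-a^*}=\|a-x\|^2=\|x^*-a^*\|^2$. The equality of norms is precisely $\|x-a\|=\|x^*-a^*\|$, and rewriting the inner-product identity as $\scal{x-a}{x^*-a^*}=-\|x-a\|^2$ yields $\|x-a\|^2+\|x^*-a^*\|^2+2\scal{x-a}{x^*-a^*}=2\|x-a\|^2-2\|x-a\|^2=0$, which is the claim.

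The substantive content is entirely carried by Corollary~\ref{GozztyD:2} (which itself rests on Theorem~\ref{PropLeP:1} and Theorem~\ref{domFdm:1}); the genuinely new work here is only the dictionary between the target identities and the coincidence $-b^*\in J(b)$ with $b^*\in\hat A(b)$, together with the duality-map characterization. The main obstacle, and the point I would check most carefully, is the stability claim in the first step: that translating $\gra A$ by $(x,x^*)$ preserves \emph{maximality in $X^{**}\times X^*$} and not merely in $X\times X^*$. This is clear because any proper monotone extension of $\hat A$ inside $X^{**}\times X^*$ would, upon translating back by $(x,x^*)$, produce a proper monotone extension of $A$ in $X^{**}\times X^*$, contradicting ultramaximality of $A$.
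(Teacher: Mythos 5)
Your proof is correct and takes essentially the same approach as the paper: the paper sets $B:=A(\cdot+x)$ (translating only the domain variable) and applies Corollary~\ref{GozztyD:2} to solve $x^*\in (B+J)(b)$, while you translate both coordinates and solve $0\in(\hat A+J)(b)$ — a cosmetic difference. After that, both arguments extract $x^*-a^*\in J(a-x)$ and conclude via the same duality-map identities $\scal{a-x}{x^*-a^*}=\|a-x\|\,\|x^*-a^*\|$, $\|a-x\|=\|x^*-a^*\|$ and the same algebra.
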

\begin{proof}
Define $B:X\rightrightarrows X^*$ by $B:=A(\cdot +x)$.
By the assumption, we have $B$ is ultramaximally monotone.
Then Corollary~\ref{GozztyD:2} implies that $x^*\in\ran (B+J)$.
 Thus there exists $b\in X$ such that $x^*\in Bb+Jb= A(b+x)+Jb$.
Let $a:=b+x$. Thus $b:=a-x$.  We have $x^*\in Aa+J(a-x)$.
Let $a^*\in Aa$ such that $x^*\in a^*
+J(a-x)$. Hence we have
$x^*-a^*\in
J(a-x)$ and then
\begin{align*}
\langle x^*-a^*, a-x\rangle=\|x^*-a^*\|\cdot\|a-x\|\quad
\text{and}\quad \|x^*-a^*\|=\|a-x\|.
\end{align*}
Then
\begin{align*}
\|x-a\|^2+\|x^*-a^*\|^2+2\langle x-a, x^*-a^*\rangle
=2\|x-a\|^2-2\|x^*-a^*\|\cdot\|a-x\|=0.
\end{align*}
\end{proof}

Now we show that every ultramaximally monotone operator is of type (NA).
\begin{theorem}\label{TyNA:2}
Let $A:X\rightrightarrows X^*$ be ultramaximally monotone. Then $A$ is of type (NA).

\end{theorem}
\begin{proof}
Let $(x,x^*)\in X\times X^*\backslash\gra A$.
 Proposition~\ref{TyNA:1} implies that
there exists $(a,a^*)\in\gra A$ such that $\|x^*-a^*\|=\|x-a\|$ and
$2\|x-a\|\cdot\|x^*-a^*\|+2\langle x-a, x^*-a^*\rangle=0
$. Thus
\begin{align*}\langle x-a, x^*-a^*\rangle=-\|x-a\|\cdot\|x^*-a^*\|.
\end{align*}
Since $(x,x^*)\not\in\gra A$ and $\|x^*-a^*\|=\|x-a\|$,
we have $\|x^*-a^*\|=\|x-a\|\neq0$.
Hence $x\neq a$ and $x^*\neq a^*$.
\end{proof}

\begin{remark}
In \cite[Remark~29.4]{Si2}, Simons shows that not every operator of type (NI)
 is necessarily of type (NA) (see the operator
$A:=\partial \iota_X$ when $X$ is nonreflexive).
Hence we cannot significantly weaken the conditions in Theorem~\ref{TyNA:2}.
\end{remark}

\begin{corollary}[Bauschke and Simons] \emph{(See \cite[Theorem~3.5]{BS1}.)}
\label{TyNA:3}
Let $A:X\rightrightarrows X^*$ be an ultramaximally monotone  linear relation.
 Assume that $A$ is at most single-valued. Then $A$ is of type (NA).

\end{corollary}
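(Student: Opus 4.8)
The plan is to read this statement as an immediate consequence of the more general Theorem~\ref{TyNA:2}. That theorem asserts that \emph{every} ultramaximally monotone operator $A\colon X\rightrightarrows X^*$ is of type (NA), carrying no linearity or single-valuedness assumption whatsoever. Since the hypothesis here already supplies that $A$ is ultramaximally monotone, I would simply invoke Theorem~\ref{TyNA:2} and conclude at once that $A$ is of type (NA).

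The extra hypotheses---that $A$ is a linear relation and at most single-valued---play no role in this argument. They merely describe the original context of Bauschke and Simons \cite[Theorem~3.5]{BS1}, where the conclusion was reached through linear-relation machinery (for example, by passing to $A^{-1}$, observing single-valuedness as in Fact~\ref{PropLe:2}, and arguing with the duality map). Recording the corollary in this form is precisely to emphasize that their special case is now subsumed by the general Theorem~\ref{TyNA:2}, stripped of those structural assumptions.

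Consequently there is no genuine obstacle to overcome. The sole point requiring attention is that the precise hypothesis of Theorem~\ref{TyNA:2}, namely ultramaximal monotonicity of $A$, is indeed in force---and this is exactly what is assumed. The proof therefore reduces to a single line citing Theorem~\ref{TyNA:2}, with the remark that the linear and single-valued hypotheses are not needed for the present route.
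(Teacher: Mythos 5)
Your proposal is correct and matches the paper's own treatment: the corollary appears immediately after Theorem~\ref{TyNA:2} with no separate proof, precisely because it is the special case obtained by invoking that theorem, with the linearity and single-valuedness hypotheses serving only to echo the original formulation of Bauschke and Simons. Nothing further is needed.
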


\subsection{Linear continuous operator}
In this subsection, we present some sufficient conditions for
 a Banach space to be reflexive  by a linear continuous and ultramaximally monotone operator.

Let $A:X\rightarrow X^*$ be a linear continuous operator.
Define $P, S:X\rightarrow X^*$ by
\begin{align}\label{lrsee:c7}
P:=\frac{A+A^*}{2}\quad\text{and}\quad S:=\frac{A-A^*}{2},
\end{align}
 respectively.  When $A$ is  monotone,
we apply use the well known fact (see, e.g., \cite{PheSim}) to obtain that
\begin{equation}
\nabla f= P,\label{lrsee:c8}
\end{equation} where $f:=\frac{1}{2}\langle x, Ax\rangle,\quad\forall x\in X$.

\begin{lemma}\label{ProLem:2}
Let $A:X\rightarrow X^*$ be linear continuous and ultramaximally monotone.
Let $S$ be defined as in \eqref{lrsee:c7}.
 Then $S$ is  ultramaximally monotone.
 \end{lemma}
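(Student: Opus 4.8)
The plan is to reduce the ultramaximality of $S$ to a single positivity statement about the bidual, and then to feed a carefully chosen monotonically related pair into the ultramaximality of $A$. Before anything else I would record the structural facts. Since $S$ is the skew part, $\langle a, Sa\rangle = 0$ for every $a\in X$, so $S$ is monotone; being linear and continuous, it is maximally monotone by Fact~\ref{FPS:1}. Moreover $S^*|_X=-S$ and $P^*|_X=P$ (symmetry of $P$), and on the bidual the Banach adjoint splits as $A^*=P^*+S^*$. Throughout I identify $X$ with its canonical image in $X^{**}$ and view $\gra S\subseteq X^{**}\times X^*$.

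Next I would compute when a pair $(x^{**},x^*)\in X^{**}\times X^*$ is monotonically related to $\gra S$. Using $\langle a,Sa\rangle=0$ and $\langle x^{**},Sa\rangle=\langle a,S^*x^{**}\rangle$, one has for every $a\in X$
\[\langle x^{**}-a,\,x^*-Sa\rangle=\langle x^{**},x^*\rangle-\langle a,\,S^*x^{**}+x^*\rangle\ge 0.\]
Because $a$ ranges over the whole subspace $X$, this forces $x^*=-S^*x^{**}$, and then $\langle x^{**},x^*\rangle=-\langle x^{**},S^*x^{**}\rangle\ge0$. Conversely, every such pair with $x^{**}\in X$ already lies in $\gra S$. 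Hence $S$ is ultramaximally monotone \emph{if and only if}
\[\langle x^{**},S^*x^{**}\rangle\le0\ \Longrightarrow\ x^{**}\in X,\]
and it is precisely this implication that I must prove.

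The heart of the argument is to convert such an $x^{**}$ into a point monotonically related to $\gra A$. Assuming $\langle x^{**},S^*x^{**}\rangle\le0$, I set $x^*:=(P^*-S^*)x^{**}$. Expanding $\langle x^{**}-a,\,x^*-Aa\rangle$ with $A=P+S$, $A^*=P^*+S^*$, $\langle a,Aa\rangle=\langle a,Pa\rangle$ and $P^*a=Pa$, I expect the cancellation
\[\langle x^{**}-a,\,x^*-Aa\rangle=\langle a-x^{**},\,P^*(a-x^{**})\rangle-\langle x^{**},S^*x^{**}\rangle .\]
The first term on the right is nonnegative because $P^*$ is monotone, and the second is nonnegative by assumption; thus $(x^{**},x^*)$ is monotonically related to $\gra A$. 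Since $A$ is ultramaximally monotone, $(x^{**},x^*)\in\gra A$, so in particular $x^{**}\in X$, which is exactly the implication sought. Note that this is a pointwise-in-$a$ identity, so no infimal-convolution or conjugate computation is needed.

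The only external input is that $P^*$ is monotone, i.e. $\langle z^{**},P^*z^{**}\rangle\ge0$ for all $z^{**}\in X^{**}$. I would obtain this by observing that, by \eqref{lrsee:c8}, $P=\nabla f=\partial f$ with $f:=\tfrac12\langle\cdot,A\cdot\rangle$ convex and continuous, so $P$ is maximally monotone of type (NI) by Fact~\ref{GozztyD:1}; as $P$ is a monotone linear relation with closed graph, Fact~\ref{thm:bbwy} then gives that $P^*$ is monotone. The one genuinely delicate point is the displayed quadratic identity: I must check that all the mixed bidual cross terms collapse into $\langle a-x^{**},P^*(a-x^{**})\rangle$, which hinges on the symmetry $P^*a=Pa$ for $a\in X$ together with the bookkeeping $A^*=P^*+S^*$ and the choice $x^*=(P^*-S^*)x^{**}$. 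I expect verifying this cancellation to be the main technical obstacle, but it is purely algebraic and should require no Goldstine- or limiting-type argument.
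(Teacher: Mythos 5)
Your proof is correct and takes essentially the same route as the paper: both decompose $A=P+S$, obtain monotonicity of $P^*$ from Fact~\ref{GozztyD:1} together with Fact~\ref{thm:bbwy}, and convert a pair monotonically related to $\gra S$ into one monotonically related to $\gra A$ by adding $P^*x^{**}$ --- indeed your chosen pair $\big(x^{**},(P^*-S^*)x^{**}\big)$ is exactly the paper's pair $(x^{**},P^*x^{**}+x^*)$ once your derived identity $x^*=-S^*x^{**}$ is substituted --- before invoking the ultramaximality of $A$. The only differences are stylistic: you argue directly via an explicit characterization of the pairs monotonically related to $\gra S$ and verify the quadratic identity by hand, whereas the paper argues by contradiction and keeps $x^*$ abstract.
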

\begin{proof}
Suppose to the contrary that $S$ is not
 ultramaximally monotone.
Since $S$ is maximally monotone, there exists $(x^{**}, x^*)\in X^{**}
\times X^*$ such that $x^{**}\notin X$ and  $(x^{**}, x^*)$
is monotonically related to $\gra S$.

We have $\gra A:=\gra(P+S)\subseteq\gra (P^*+S)$.
Fact~\ref{GozztyD:1} and \eqref{lrsee:c8}
show that $P$ is maximally monotone of type (NI).
Fact~\ref{thm:bbwy} shows that $P^*$ is monotone.
Then we have $(x^{**}, P^*x^{**}+x^*)$ is  monotonically related to $\gra A$. By
the assumption, $(x^{**}, P^*x^{**}+x^*)\in\gra A$,
which contradicts that $x^{**}\notin X$.
\end{proof}

\begin{corollary}\label{ProLem:L2}
Let $A:X\rightarrow X^*$ be linear continuous
and monotone. Assume that $\ran A=X^*$.
Let $S$ be defined as in \eqref{lrsee:c7}.
 Then $S$ is  ultramaximally monotone.

\end{corollary}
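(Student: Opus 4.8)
The plan is to reduce this corollary to Lemma~\ref{ProLem:2} by first promoting the bare monotonicity of $A$ to ultramaximal monotonicity, using the surjectivity hypothesis $\ran A=X^*$. The key observation is that a linear continuous operator $A\colon X\to X^*$ is in particular a monotone linear relation (its graph is a linear subspace) that is single-valued, and hence at most single-valued.

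First I would invoke Fact~\ref{PropLe:2} with this $A$. Since $A$ is a monotone linear relation with $\ran A=X^*$ and is at most single-valued, that fact guarantees that $A^{-1}$ is single-valued, linear, and continuous from $X^*$ into $X^{**}$ with $\dom A^{-1}=X^*$, and concludes that $A$ is ultramaximally monotone.

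Having established that $A$ is linear continuous and ultramaximally monotone, I would then apply Lemma~\ref{ProLem:2} directly: that lemma states precisely that the skew part $S=\tfrac{A-A^*}{2}$ of a linear continuous and ultramaximally monotone operator is itself ultramaximally monotone. This completes the proof.

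The main step is really the first one, namely recognizing that surjectivity is exactly the extra ingredient needed to upgrade monotonicity to ultramaximal monotonicity via Fact~\ref{PropLe:2}; once that is in place, Lemma~\ref{ProLem:2} does all the remaining work, so there is no genuine obstacle beyond correctly matching the hypotheses of the two cited results.
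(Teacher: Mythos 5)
Your proof is correct and is exactly the paper's argument: the paper's proof reads ``Combine Fact~\ref{PropLe:2} and Lemma~\ref{ProLem:2} directly,'' which is precisely your two-step reduction (upgrade $A$ to ultramaximally monotone via Fact~\ref{PropLe:2}, then apply Lemma~\ref{ProLem:2} to get that $S$ is ultramaximally monotone). Your write-up just spells out the hypothesis-matching that the paper leaves implicit.
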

\begin{proof}
Combine Fact~\ref{PropLe:2} and Lemma~\ref{ProLem:2} directly.
\end{proof}

\begin{proposition}\label{PropLe:P4}
Let $A:X\rightarrow X^*$ be linear continuous and
 ultramaximally monotone, and $S$ be defined as
 in \eqref{lrsee:c7}.
Suppose that  there exists a proper lower
semicontinuous and convex function $g:X\rightarrow\RX$ such that
$\inte\dom g\neq\varnothing$ and
$\partial g- S$ is of type (NI). Then $X$ is reflexive.

\end{proposition}
\begin{proof}
Theorem~\ref{PropLeP:1} and Lemma~\ref{ProLem:2}
 imply that $\partial g= S+(\partial g-S)$ is  ultramaximally monotone.
Thus combining with Lemma~\ref{PropEXT:1},
$X$ is reflexive.
\end{proof}

\begin{corollary}\label{PropLe:5}
Let $A:X\rightarrow X^*$ be linear continuous and ultramaximally monotone.
Suppose that  there exists $\delta>0$
such that $\langle Ax, x\rangle\geq\delta\|x\|^2,
\quad\forall x\in X$. Then $X$ is reflexive.

\end{corollary}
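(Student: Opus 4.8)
The plan is to discard the skew part of $A$ and prove that the symmetric part $P=\tfrac{A+A^*}{2}$ is by itself ultramaximally monotone, after which Lemma~\ref{PropEXT:1} closes the argument. Since $S=\tfrac{A-A^*}{2}$ is skew we have $\langle Sx,x\rangle=0$, so the coercivity hypothesis transfers verbatim to $P$:
\[
\langle Px,x\rangle=\langle Ax,x\rangle\ge\delta\|x\|^2,\qquad\forall x\in X.
\]
Hence $P$ is a symmetric, linear, continuous, strongly monotone operator, and by \eqref{lrsee:c8} it is the gradient of $f:=\tfrac12\langle\cdot,A\cdot\rangle$, which is convex and continuous, so $\inte\dom f=X\neq\varnothing$.

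The crux is to establish $\ran P=X^*$. Strong monotonicity gives $\delta\|x\|^2\le\langle Px,x\rangle\le\|Px\|\,\|x\|$, hence $\|Px\|\ge\delta\|x\|$, so $P$ is injective with closed range. For surjectivity I would use the symmetry: the bilinear form $b(x,y):=\langle Px,y\rangle$ is symmetric, bounded, and coercive, hence an inner product whose induced norm is equivalent to $\|\cdot\|$; thus $(X,b)$ is a Hilbert space, and the Riesz representation theorem produces, for each $x^*\in X^*$, a vector $x$ with $x^*=b(x,\cdot)=Px$, so $\ran P=X^*$. (Dually, one may instead check that $P^*\colon X^{**}\to X^*$ is again strongly monotone — because $f\ge\tfrac\delta2\|\cdot\|^2$ forces $f^{**}\ge\tfrac\delta2\|\cdot\|^2$ — whence $\ker P^*=(\ran P)^{\bot}=\{0\}$ and the closed range $\ran P$ is dense, so all of $X^*$.)

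With $\ran P=X^*$ in hand, Fact~\ref{PropLe:2}, applied to the single-valued monotone linear operator $P$, shows that $P$ is ultramaximally monotone. Since $\partial f=P$ is ultramaximally monotone and $\inte\dom f\neq\varnothing$, Lemma~\ref{PropEXT:1} yields that $X$ is reflexive. (I note that the Hilbertian structure found above already makes $X$ isomorphic to a Hilbert space, hence reflexive, so the last two steps are only needed if one insists on routing the argument through the paper's machinery.)

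The main obstacle is precisely the surjectivity step $\ran P=X^*$: in a non-reflexive space a coercive continuous convex function need not attain its minimum, so one cannot simply minimize $x\mapsto f(x)-\langle x^*,x\rangle$ to solve $Px=x^*$. What makes it go through is the symmetry of $P$, which upgrades coercivity to a genuine Hilbertian (equivalently, self-dual) structure; this is exactly where strong monotonicity is decisive. By contrast, the apparently natural route via Proposition~\ref{PropLe:P4} with $g=f$ would require $\partial f-S=P-S=A^*|_X$ to be of type (NI), and verifying this through Fact~\ref{thm:bbwy} demands monotonicity of $(A^*|_X)^*$, whose quadratic form equals $\langle\xi,P^*\xi\rangle-\langle\xi,S^*\xi\rangle$ and has no definite sign on $X^{**}\setminus X$; working with the symmetric part directly sidesteps this difficulty.
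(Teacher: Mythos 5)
Your proof is correct, and it takes a genuinely different --- and far more elementary --- route than the paper's. The paper never exploits the Hilbertian structure hiding in the hypothesis: it first lifts the coercivity to the bidual, proving $\langle P^*x^{**},x^{**}\rangle\geq\delta\|x^{**}\|^2$ by means of a bounded net $(x_\alpha)$ with $x_\alpha\weakstarly x^{**}$ and $Px_\alpha\longrightarrow P^*x^{**}$ (citing Bauschke's thesis and Megginson); it then chooses $\gamma>0$ so large that $\gamma P^*-S^*$ is monotone (possible because $S$ is bounded), invokes the Brezis--Browder theorem (Fact~\ref{thm:bbwy}) to conclude that $\gamma P-S=\partial(\gamma f)-S$ is maximally monotone of type (NI), and finishes with Proposition~\ref{PropLe:P4} --- which is precisely where the ultramaximal monotonicity of $A$ enters, via Lemma~\ref{ProLem:2} (ultramaximality of $S$) and the sum theorem (Theorem~\ref{PropLeP:1}), before Lemma~\ref{PropEXT:1} closes the argument. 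Your route short-circuits all of this: $b(x,y):=\langle Px,y\rangle$ is a bounded, symmetric, coercive bilinear form, so $b(\cdot,\cdot)^{1/2}$ is an equivalent norm induced by an inner product, hence $X$ is isomorphic to a Hilbert space and therefore reflexive outright; the subsequent detour through Riesz, Fact~\ref{PropLe:2} and Lemma~\ref{PropEXT:1} is valid but, as you note yourself, unnecessary. This buys you two strengthenings the paper does not obtain: your argument never uses the hypothesis that $A$ is ultramaximally monotone (plain monotonicity follows from coercivity anyway), so the same proof also subsumes Corollary~\ref{PropLe:L5} and shows its assumption $\ran A=X^*$ to be superfluous; and your conclusion is stronger, since $X$ comes out Hilbertian, not merely reflexive. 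What the paper's longer route buys is a showcase of its machinery (bidual coercivity estimates, Brezis--Browder, the sum theorem behind Proposition~\ref{PropLe:P4}), which is the general-purpose tool and does not depend on extracting a symmetric coercive form.

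One caveat: your parenthetical dual argument is not at the same level of rigor as your main one. Passing from $f^{**}\geq\tfrac{\delta}{2}\|\cdot\|^2$ to strong monotonicity of $P^*$ presupposes an identification of $f^{**}$ with (or a bound by) the quadratic form $x^{**}\mapsto\tfrac12\langle P^*x^{**},x^{**}\rangle$ on $X^{**}$, and that bidual identification is exactly the point the paper has to labor for with its net argument. Since your primary argument is self-contained, this does not affect correctness.
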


\begin{proof} Let $P, S$ be defined as in \eqref{lrsee:c7}.
Now we show that
\begin{align}
\langle P^*x^{**}, x^{**}\rangle\geq\delta\|x^{**}\|^2,
\quad\forall x^{**}\in X^{**}.\label{lrsee:20}
\end{align}
Let $x^{**}\in X^{**}$.
\cite[Corollary~12.3.9, page~146]{Bthesis} implies that
there exists a bounded net
$(x_{\alpha})_{\alpha\in I}$ such that
$x_{\alpha}\weakstarly x^{**}$ and
 $Px_{\alpha}\longrightarrow P^*x^{**}$.
 Then we have
\begin{align*}
\delta\|x_{\alpha}\|^2\leq
\langle Ax_{\alpha}, x_{\alpha}\rangle
=\langle Px_{\alpha}, x_{\alpha}\rangle
\longrightarrow\langle P^*x^{**}, x^{**}\rangle.
\end{align*}
Then \cite[Theorem~2.6.14]{Megg}
implies that $\langle P^*x^{**}, x^{**}\rangle\geq
\delta\|x^{**}\|^2$.
Hence \eqref{lrsee:20} holds.

By \eqref{lrsee:20}, there exists
$\gamma>0$ such that $\gamma P^*-S^*$ is monotone.
Fact~\ref{thm:bbwy} shows that $\gamma P-S$
is maximally monotone of type (NI).
Let $f$ be defined as in \eqref{lrsee:c8}.
Then  $\gamma P-S=\partial \gamma f -S$.
Thus Proposition~\ref{PropLe:P4} implies that $X$ is reflexive.
\end{proof}

\begin{corollary}\label{PropLe:L5}
Let $A:X\rightarrow X^*$ be linear
continuous and monotone. Assume that $\ran A=X^*$.
Suppose that  there exists
 $\delta>0$ such that $\langle Ax, x\rangle\geq\delta\|x\|^2,
 \quad\forall x\in X$. Then $X$ is reflexive.

\end{corollary}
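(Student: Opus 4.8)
The plan is to reduce this statement to Corollary~\ref{PropLe:5} by first upgrading the hypothesis ``monotone with $\ran A=X^*$'' to ``ultramaximally monotone.'' The only gap between the two statements is precisely this ultramaximality, since both share the linearity, continuity, and the $\delta$-coercivity bound $\langle Ax,x\rangle\geq\delta\|x\|^2$; so once ultramaximality is in hand, the conclusion follows immediately.

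First I would observe that since $A:X\rightarrow X^*$ is linear, its graph $\gra A$ is a linear subspace of $X\times X^*$, so $A$ is a monotone linear relation; moreover, being an everywhere-defined single-valued map, it is in particular at most single-valued. Together with the hypothesis $\ran A=X^*$, all the assumptions of Fact~\ref{PropLe:2} are satisfied. Applying that fact yields that $A^{-1}$ is single-valued, linear, and continuous from $X^*$ to $X^{**}$ with $\dom A^{-1}=X^*$, and, crucially, that $A$ is ultramaximally monotone.

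Having established ultramaximality, the remaining step is direct: $A$ is now linear continuous and ultramaximally monotone, and by hypothesis there is $\delta>0$ with $\langle Ax,x\rangle\geq\delta\|x\|^2$ for all $x\in X$. These are exactly the hypotheses of Corollary~\ref{PropLe:5}, which therefore applies and gives that $X$ is reflexive.

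I do not expect a genuine obstacle here: the result is a clean composition of Fact~\ref{PropLe:2} (which promotes surjectivity of a single-valued monotone linear map to ultramaximality) with Corollary~\ref{PropLe:5} (which extracts reflexivity from a coercive, linear continuous, ultramaximally monotone operator). This mirrors precisely the way Corollary~\ref{ProLem:L2} was obtained by combining Fact~\ref{PropLe:2} with Lemma~\ref{ProLem:2}, so the proof reduces to citing these two prior results in sequence.
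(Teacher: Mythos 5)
Your proposal is correct and is exactly the paper's argument: the paper's proof is the one-liner ``Combine Corollary~\ref{PropLe:5} and Fact~\ref{PropLe:2} directly,'' and you have simply spelled out that combination (Fact~\ref{PropLe:2} upgrades the surjective monotone linear map to ultramaximally monotone, then Corollary~\ref{PropLe:5} yields reflexivity). No gaps; your verification that $A$ qualifies as an at most single-valued monotone linear relation is the right justification for invoking Fact~\ref{PropLe:2}.
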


\begin{proof}
Combine Corollary~\ref{PropLe:5} and Fact~\ref{PropLe:2} directly.
\end{proof}

At last, we pose the following three interesting problems.
\begin{problem}\label{Prob:unt1}
Let $A: X\rightrightarrows X^*$ be maximally monotone
with $\inte\dom A\neq\varnothing$ and $\ran A=X^*$.
Is $A$ necessarily ultramaximally monotone?
\end{problem}

\begin{problem}\label{Prob:unta1}
If there exists a linear continuous and ultramaximally
monotone operator  defined on $X$,
is $X$ necessarily  reflexive?
\end{problem}

A  general problem is that:
\begin{problem}\label{Prob:unt2} If there exists an
ultramaximally monotone operator
with nonempty interior domain defined on $X$,
is $X$ necessarily  reflexive?
\end{problem}

For the subdifferential operator,
we
 have an affirmative answer to Problem~\ref{Prob:unt2} by Lemma~\ref{PropEXT:1}.

Saint Raymond presents the following
 interesting result related to
 Lemma~\ref{PropEXT:1} and Problem~\ref{Prob:unt2}
   in  \cite[Corollary~2.5]{SaiRay}:
\begin{quote}
Let $f:X\rightarrow\RX$ be proper
 with $\inte\dom f\neq\varnothing$ and $\partial f=X^*$.
Then $X$ is reflexive.
\end{quote}
Orihuela and  Ruiz Gal\'{a}n proved the above result when $f$ is supercoercive
(see \cite[Theorem~7]{OriGala}).

We wonder if there exists a direct way
 to prove that $\partial f$ is ultramaximally
 monotone when $f$ is lower semicontinuous and
 convex under Saint Raymond's assumption that
 $\inte\dom f\neq\varnothing$ and $\partial f=X^*$.

\section*{Acknowledgment}
The author thanks Dr.~Tony Lau and Department of Mathematical and Statistical Sciences at University of Alberta for supporting his visit and
providing excellent working conditions, which completed this research.

\end{document}